\documentclass[a4paper,11pt]{article}
\usepackage[top=2.5cm,bottom=2.5cm,left=2.2cm,right=2.2cm]{geometry}
\usepackage{amsfonts}
\usepackage{mathrsfs,amscd,amssymb,amsthm,amsmath,bm,graphicx,psfrag,subfigure,url,mathtools}
\usepackage{pict2e}
\usepackage{psfrag,amsmath}
\usepackage{tikz}
\usepackage{indentfirst}
\usepackage{hyperref}
\usepackage{bookmark}
\usepackage{enumerate}
\usepackage{enumitem}
\usepackage{latexsym,euscript,epic,eepic,color}
\usepackage{multirow}
\usepackage{multicol}
\usepackage{longtable}
\usepackage{adjustbox}
\usepackage[all]{xy}
\usepackage{setspace}
\usepackage{epstopdf}
\allowdisplaybreaks
\usepackage{authblk}
\usepackage{tikz}
\usepackage{pifont}
\usepackage{mdframed}
\usepackage{booktabs,array}
\usepackage{tabularx}
\usepackage{hyperref}
\hypersetup{colorlinks=true, linkcolor=blue, filecolor=magenta, urlcolor=cyan,}
\usepackage[most]{tcolorbox}
\definecolor{mygray}{RGB}{240,240,240}
\tcbset{
  colback=mygray,
  boxrule=0pt,
}
\graphicspath{ {./images/} }

\newcommand{\customfootnote}[1]{
  \let\thefootnote\relax\footnotetext{#1}
}

\makeatletter

\renewcommand{\@seccntformat}[1]{{\csname the#1\endcsname}{\normalsize .}\hspace{.5em}}
\makeatother

\renewcommand{\thefootnote}{\fnsymbol{footnote}}
\usepackage{ifpdf}
\usepackage{indentfirst}

\newcommand{\ex}{{\rm ex}}

\newtheorem{thm}{Theorem}[section]

\newtheorem{defi}{Definition}
\newtheorem{claim}{Claim}
\newtheorem{fac}{Fact}
\newtheorem{lem}[thm]{Lemma}

\newtheorem{pb}{Problem}

\newenvironment{wst}
{\setlength{\leftmargini}{1.5\parindent}
 \begin{itemize}
 \setlength{\itemsep}{-1.1mm}}
{\end{itemize}}

\newcommand{\keywords}[1]{%
  \par\vspace{6pt}\noindent\textbf{Keywords: }#1\par
}

\newcommand{\MSC}[2][2020]{%
  \par\vspace{3pt}\noindent\textbf{MSC(#1): }#2\par
}
\begin{document}
\baselineskip=0.23in

\title{\bf Spectral extrema of $1$-planar graphs with no short cycles or small cliques}
\author[1]{Shuchao Li}
\author[,1]{Mingli Wang\footnote{Corresponding author.\\
\hspace*{2em}E-mail: lscmath@ccnu.edu.cn (S. Li), wmlmath@163.com (M. Wang), zhaoqin@hbu.edu.cn (Q. Zhao)}}
\author[2]{Qin Zhao}
\affil[1]{School of Mathematics and Statistics, and Hubei Key Lab--Math. Sci.,\linebreak Central China Normal University, Wuhan 430079, China}
\affil[2]{Faculty of Mathematics and Statistics, Hubei Key Laboratory of Applied Mathematics, Hubei University, Wuhan, 430062, China}

\date{\today}
\maketitle{}
\begin{abstract}
The spectral Tur\'an type problem, initiated by Nikiforov in 2007, aims to determine the graphs among $n$-vertex $H$-free graphs having maximum spectral radius. In this paper, we study this problem for $1$-planar graphs, i.e., graphs that admit a drawing in the plane such that each edge is crossed at most once. Recently, Xu and Chang proved that the graphs among all $n$-vertex $K_5$-free $1$-planar graphs having maximum spectral radius lie within a small family of candidates. First, this paper  explicitly identifies the unique spectral extremal graph among the $n$-vertex $K_5$-free $1$-planar graphs. Second, it establishes a structural reduction theorem: For any forbidden subgraph $F$ with $\delta(F)\ge2$ that is contained in $K_2\vee P_{n-2}^{2+}$ but not in $K_2\vee I_{n-2}$, every spectral extremal $F$-free $1$-planar graph contains a spanning complete bipartite graph $K_{2,n-2}$, where $P^{2+}_{n-2}$ is obtained from a path $u_1u_2\dots u_{n-2}$ by adding edge $u_1u_{n-2}$ and all edges $u_iu_{i+2}$ for $1\le i\le n-4$, and $I_{n-2}$ denotes the empty graph on $n-2$ vertices. As applications, the graph among all $n$-vertex $C_5$-free (resp. $2C_5$-free) $1$-planar graphs having maximum spectral radius is determined. These results extend spectral Tur\'{a}n type problems for $1$-planar graphs from cliques to cycles and their disjoint union.
\end{abstract}
\keywords{Clique; Cycle; Spectral radius; $1$-planar graph}
\MSC{05C35; 05C50}
\section{\normalsize Introduction}

All graphs considered in this paper are finite, simple, and undirected. Let $G$ be a graph with vertex set $V(G)$ and edge set $E(G)$. We write $n(G)=|V(G)|$ and $e(G)=|E(G)|$ for the \textit{order} and \textit{size} of $G$, respectively. We use standard notation and terminology as in Bollob\'as \cite{BB1998} and Godsil and Royle \cite{GGR2001}. 

Given a graph $F$, we say that a graph $G$ is $F$-\textit{free}, if $G$ does not contain $F$ as a subgraph. The \textit{Tur\'an number}, denoted by $\ex(n,F),$ for a fixed graph $F$ and positive integer $n$, is the maximum number of edges in $n$-vertex $F$-free graphs. We use $\operatorname{EX}(n,F)$ to denote the set of all $n$-vertex $F$-free graphs whose size achieves $\ex(n,F).$  Determining $\ex(n,F)$ is one of the most important problems in extremal graph theory.

In 1986, Brualdi and Solheid \cite{BS1986} raised the problem: For a given family of graphs, how can we characterize the graphs attaining maximum spectral radius? Motivated by the above Tur\'an type problem and the Brualdi--Solheid problem, Nikiforov~\cite{Nikiforov2007} proposed the well-known Brualdi--Solheid Tur\'an type problem (spectral Tur\'{a}n type problems): How can we characterize the graphs attaining the maximum spectral radius among $n$-vertex $H$-free graphs? Nikiforov \cite{nikiforov2010,fiedlernikiforov2010,Nikiforov2007}  embarked on a succession of pioneering investigations into this problem.
The above Brualdi--Solheid Tur\'an type problem is one of the most important problems in spectral graph theory. A series of significant advances have been made in this area, and one may consult Byrne, Desai and Tait \cite{BDT2026}, Chen, Lei and Li~\cite{CLL}, Cioabă, Desai, and Tait \cite{CDT2022}, and Wang, Kang and Xue \cite{WKX2023} for recent advances.


A graph is \textit{planar} if it admits a drawing in the plane such that no two edges cross. A graph is $1$-\textit{planar} if it admits a drawing in the plane such that every edge is crossed at most once. It is well known that every $1$-planar graph admits a drawing in which adjacent edges do not cross. Throughout this paper, we fix such a $1$-planar drawing for each graph under consideration. Let $\mathcal P_1$ denote the family of all $1$-planar graphs.

Spectral extremal problems for planar graphs have been studied extensively. Boots and Royle \cite{BR1991}, and Cao and Vince \cite{CV1993}, independently, conjectured that, for $n\geq9$, the unique $n$-vertex planar graph with maximum spectral radius is $K_2\vee P_{n-2}$. For sufficiently large $n$, this conjecture was confirmed by Tait and Tobin \cite{Tait2017Tobin}. Recently, Liu, Ning and Wang~\cite{Liu2026NingWang} have provided a complete resolution for this conjecture. Subsequently, the spectral Tur\'{a}n type problems for planar graphs without various structures have been further explored. In particular, Fang, Lin, and Shi \cite{FLS2024} determined the $n$-vertex planar graphs without vertex-disjoint cycles having maximum spectral radius. 

As a natural extension of planar graphs, $1$-planar graphs provide a closely related setting for analogous spectral extremal problems. Recently, Zhang, Wang and Wang \cite{Zhang2024Wang} determined the unique graph among all large $n$-vertex $1$-planar graphs having maximum spectral radius.
\begin{thm}[\cite{Zhang2024Wang}]\label{thm:Wang}
Let $n\geq 6.23\times 10^{18}$. Then the unique $n$-vertex $1$-planar graph with maximum spectral radius is $K_2\vee P^{2+}_{n-2}$, where $P^{2+}_{n-2}$ is obtained from a path $u_1u_2\dots u_{n-2}$ by adding the edge $u_1u_{n-2}$ and all edges $u_iu_{i+2}$ for $1\le i\le n-4$.
\end{thm}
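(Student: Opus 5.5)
The plan follows the Tait--Tobin framework for planar graphs, adapted to $1$-planar graphs using the edge bound $e(G)\le 4n-8$. Let $G$ be an $n$-vertex $1$-planar graph of maximum spectral radius $\lambda=\lambda(G)$, with positive unit Perron vector $\mathbf{x}$ normalized so that $\max_v x_v=x_{u}=1$.

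\textbf{Step 1: the lower bound.} Since $K_2\vee P^{2+}_{n-2}$ is $1$-planar, we have $\lambda\ge \lambda(K_{2,n-2})=\sqrt{2(n-2)}$. Using $e(G)\le 4n-8$ we also have $\lambda\le\sqrt{2e(G)}\le \sqrt{8n}$. Both estimates come from counting walks of length two from $u$:
\[
\lambda^2 = \sum_{v} a^{(2)}_{uv}x_v .
\]
Combining this with sparsity of subgraphs (every subgraph $H$ satisfies $e(H)\le 4|V(H)|-8$), I would show two things. First, there is a vertex $u'$ with $x_{u'}\ge 1-O(n^{-1/2})$, and $u,u'$ are both adjacent to all but $O(\sqrt n)$ vertices. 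Second, every other vertex has $x_v=O(n^{-1/2})$.

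\textbf{Step 2: upgrade to full joins.} Let $L$ be the set of vertices not adjacent to both $u$ and $u'$. For $v\in L$, the eigen-equation gives $\lambda x_v\le$ (bounded degree contributions) $+1$. Here we use that $G-\{u,u'\}$ has average degree below $8$. If $v$ misses $u$, we delete all edges of $v$ into $V\setminus\{u,u'\}$ and join $v$ to $u,u'$. This keeps the graph $1$-planar, because $v$ can be placed inside a face region near a $K_{2,*}$ structure, which is possible since $G$ contains an almost spanning $K_{2,m}$ whose drawing forces many faces of the form $u\,w\,u'\,w'$. The Rayleigh quotient then strictly increases, since the gain $2x_v(x_u+x_{u'})\approx 4x_v$ exceeds the loss $2x_v\sum_{w}x_w=O(x_v\cdot n^{-1/2})$. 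This contradiction yields $K_2\vee$-structure on $u,u'$, together with the edge $uu'$ by maximality. I expect this step to be the main obstacle. The analytic estimates are routine, but verifying $1$-planarity of the modified graph is not: one must show that the drawing of the $K_{2,n-2}$ part is essentially forced, with the $n-2$ vertices cyclically ordered around $u$ and $u'$ and each edge of $G-\{u,u'\}$ crossed appropriately, so that re-insertion is possible. I would handle this by analysing crossings among the edges incident to $u,u'$ and discarding the $O(1)$ exceptional vertices.

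\textbf{Step 3: identify the graph.} Once $G=K_2\vee H$, the graph $H$ is an $(n-2)$-vertex graph such that $K_2\vee H$ is $1$-planar. From the forced cyclic drawing, each vertex of $H$ has at most $4$ neighbours in $H$, namely its two cyclic neighbours on each side through crossed edges. Hence $H$ is a spanning subgraph of $P^{2+}_{n-2}$ or of the square of a cycle, with the constraint near the seam $u_1u_{n-2}$ coming from the edge $uu'$. Among these, $\lambda$ depends on $H$ through $\lambda^2-\lambda_H$-type perturbations:
\[
\lambda(K_2\vee H)\approx \tfrac12\Bigl(\lambda_H+\sqrt{\lambda_H^2+8(n-2)}\Bigr)\text{-like},
\]
and to first order on $e(H)$. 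So we maximize $e(H)$; this gives $2(n-2)-3$ edges, attained by $P^{2+}_{n-2}$. Ties among maximum-size candidates would be broken by second-order terms, comparing $\mathbf{x}^{T}A\mathbf{x}$ via degree-weighted sums, which favour $P^{2+}_{n-2}$. The explicit constant $6.23\times10^{18}$ would come from making all the $O(\cdot)$ bounds effective.
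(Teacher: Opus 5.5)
The paper does not prove this theorem. It cites it from Zhang, Wang and Wang \cite{Zhang2024Wang} and uses it only as a black box in Claim~\ref{c1}. So there is no internal proof to match against, but Sections~\ref{s4}--\ref{s5} carry out essentially your Steps 1--3 in a related setting, and they show where your sketch breaks.

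\textbf{Step 3 (main gap).} You claim the drawing of the $K_{2,n-2}$ part is forced, so every vertex of $H$ has at most $4$ neighbours in $H$ and $H$ is a spanning subgraph of $P^{2+}_{n-2}$ or of $C^2_{n-2}$. This is false. Edges $zb$ and $wa$ of the $K_{2,n-2}$ can cross (the paper's twin pairs, Fact~\ref{fact:Q-twin-pair}), and this allows chords such as $q_1q_4$. For a concrete counterexample, take $K_2\vee B_7$ with $B_7=K_1\vee 2K_3$. It is two copies of $K_6$ glued along the triangle $zwc$, and $K_6$ has a $1$-planar drawing with an uncrossed outer triangle, so $K_2\vee B_7$ is $1$-planar (compare Figure~\ref{fig:B7-embedding}). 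Yet the centre $c$ has $6$ neighbours in $H$.

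An honest Step 3 must therefore classify these local configurations (twin pairs, and the seam across $zw$) and show each non-optimal one can be removed. Your candidate family also contains $C^{2-}_{n-2}=P^{2+}_{n-2}+u_{n-3}u_1$. If $K_2\vee C^{2-}_{n-2}$, which has $4n-8$ edges, were $1$-planar, the theorem would be false. So essentially all the content of the statement lies in the seam analysis, which you only gesture at.

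\textbf{Maximizing $e(H)$.} The principle ``maximize $e(H)$ to first order'' is valid only for bounded edits. With $x_v\in[2/\rho,2/(\rho-6)]$ on $H$, the crude bounds guarantee that trading $\ell$ edges for $\ell+1$ raises $\rho$ only when $(\ell+1)/\rho^2>\ell/(\rho-6)^2$, roughly $\ell\lesssim\rho/12$. In the expansion of $f_H(\rho)$ behind Lemma~\ref{lem:f-principle}, each edge contributes $2/\rho^2\asymp n^{-1}$. The next term, $\sum_v d_H(v)^2/\rho^3$, can differ by $\asymp n^{-1/2}$ between admissible $H$ that differ in $\Theta(n)$ edges. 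So the argument has to run as in Fact~\ref{fact:edge-modify}: if $H\ne P^{2+}_{n-2}$, exhibit one bounded modification that keeps $K_2\vee H$ $1$-planar and increases $e(H)$. Your appeal to ``second-order tie-breaking'' is not an argument.

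\textbf{Step 2.} The holes here are fixable along the lines of Claim~\ref{c6}.
\begin{itemize}
\item The quadrilaterals $u\,w\,u'\,w'$ are generally not faces, since $H$-edges like $q_{i-1}q_{i+1}$ cross $zq_i$. Do not try to prove the drawing is forced, since it is not. The paper instead picks $16$ consecutive vertices of $B$ with no neighbours in $S_1$ and deletes the $O(1)$ edges among them. This costs $O(\rho^{-2})$ against a gain of $\Omega(x_{v_0})=\Omega(\rho^{-1})$. A vertex with no neighbour in $\{z,w\}$ is simply placed next to $z$.
\item Your loss bound $O(x_v n^{-1/2})$ is unjustified. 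Average degree below $8$ does not bound the degree of the particular $v$ you move. At that stage one also only knows $x_v<521\eta$ (Claims~\ref{c3}--\ref{c5}); the bound $x_v=O(n^{-1/2})$ is available only after $S_1=\emptyset$. The paper moves all of $S_1$ simultaneously along an ordering with $d_{S_i}(u_i)\le 7$, and uses $K_{3,7}$-freeness to get $d_B(u_i)\le 6$.
\item The edge $uu'$ cannot be added ``by maximality''. It must be routed along a path $u\,v\,u'$ after deleting the at most $2$ edges crossing that path and the at most $6$ $H$-edges at $v$, as in Fact~\ref{fact:zw-adjacent}.
\end{itemize}

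\textbf{Minor.} $e(P^{2+}_{n-2})=2n-6$, not $2(n-2)-3$.
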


Recently, Xu and Chang \cite{Xu2026+Chang} studied the  Brualdi--Solheid Tur\'an type problem for $1$-planar graphs without cliques, in which the $K_3$-free (resp. $K_4$-free) $1$-planar graphs having maximum spectral radius are determined. Though the extremal graphs maximizing the spectral radius over the class of $K_5$-free 1-planar graphs are not yet fully determined, Xu and Chang~\cite{Xu2026+Chang} established that the corresponding extremal graphs lie in a small collection of candidates.

Let $C_m^2$ be the square of the cycle $C_m=u_1u_2\cdots u_mu_1$, obtained by adding all edges $u_iu_{i+2}$, where the subscripts are taken modulo $m$, and let $C_m^{2-}$ be obtained from $C_m^2$ by deleting the edge $u_mu_2$.
Let $\operatorname{SPEX}_{\mathcal P_1}(n, H)$ be the set of graphs among all $H$-free $1$-planar graphs attaining maximum spectral radius. 
\begin{thm}[\cite{Xu2026+Chang}]\label{thm:K5}
Let $n$ be sufficiently large. 
\begin{wst}
\item[{\rm (i)}]
If $n$ is even, then $\operatorname{SPEX}_{\mathcal{P}_1}(n, K_5) \subseteq \{2K_1\vee C_{n-2}^2,\; K_2\vee Q\},$
where $Q$ is obtained from $P_{n-2}^{2+}$ by deleting the edges $u_{2i}u_{2i+1}$ for $1\leq i\leq \frac{n-4}{2}$.
\item[{\rm (ii)}]
If $n$ is odd, then $\operatorname{SPEX}_{\mathcal{P}_1}(n, K_5) \subseteq \{2K_1\vee C_{n-2}^{2-}\} \cup \{K_2\vee H\mid H \in \mathcal{P}_{n-2}^2\},$ where $\mathcal P_{n-2}^2$ is the family of graphs each of which is obtained from $P_{n-2}^{2+}$ by deleting exactly $\frac{n-3}{2}$ edges so that no triangle remains.
\end{wst}
\end{thm}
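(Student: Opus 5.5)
This theorem is quoted from Xu and Chang, but I would reprove it by the standard spectral-Turán route for $1$-planar graphs. Let $G\in\operatorname{SPEX}_{\mathcal P_1}(n,K_5)$ with Perron vector $\mathbf x$, normalized so that $\max_v x_v=1$, and let $\rho=\rho(G)$.

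\textbf{Step 1 (lower bound).} The candidate graphs $2K_1\vee C_{n-2}^2$ (resp.\ $2K_1\vee C_{n-2}^{2-}$) are $1$-planar and $K_5$-free. They contain $K_{2,n-2}$, so
\[
\rho\ge \sqrt{2(n-2)}.
\]

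\textbf{Step 2 (structure of $G$).} Combining this lower bound with the edge bound $e(G)\le 4n-8$ and the eigen-equations, I would run the usual cascade.
\begin{itemize}
\item There are exactly two vertices $u,u'$ with $x_u,x_{u'}$ close to $1$.
\item Every other vertex has Perron entry $O(1/\sqrt n)$.
\item $u$ and $u'$ are adjacent to all other vertices. This follows by a local edge-switching argument: if some $w$ misses $u$, delete a low-weight edge at $w$ and add $wu$. One must check that $1$-planarity and $K_5$-freeness survive, so the switch has to respect a fixed $1$-planar drawing.
\end{itemize}

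\textbf{Step 3 (reduction to $G-\{u,u'\}$).} The graph $G$ now contains a spanning $K_{2,n-2}$, so $G=K_2\vee H$ or $G=2K_1\vee H$ on the remaining $n-2$ vertices. $K_5$-freeness forces the following.
\begin{itemize}
\item If $uu'\in E(G)$, then $H$ is triangle-free.
\item If $uu'\notin E(G)$, then $H$ is $K_4$-free.
\end{itemize}
Next, $1$-planarity of $K_2\vee H$ and of $2K_1\vee H$ constrains $H$ to be a subgraph of the "square-of-path-like" structures. In the first case $H\subseteq P^{2+}_{n-2}$, and in the second $H\subseteq C^2_{n-2}$. I would take these containments from the drawing analysis underlying Theorem~\ref{thm:Wang}: apexes of degree $n-1$ force the rotation structure around them.

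\textbf{Step 4 (maximizing $e(H)$).} With high-degree apexes, $\rho$ is essentially determined by $e(H)$, via $\rho\approx 1+\sqrt{2n+\dots}$ plus lower-order terms depending on $e(H)$. So it suffices to maximize $e(H)$ under each constraint.
\begin{itemize}
\item For $K_4$-free $H\subseteq C^2_{n-2}$: if $n$ is even, $H=C^2_{n-2}$ itself works, since its triangles do not assemble into a $K_4$ when $n-2\ge 6$. If $n$ is odd, one edge must be removed, giving $C_{n-2}^{2-}$.
\item For triangle-free $H\subseteq P^{2+}_{n-2}$: each consecutive triangle $u_iu_{i+1}u_{i+2}$ must lose an edge. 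Counting shows one must delete at least $\lceil (n-4)/2\rceil$ or $(n-3)/2$ edges, which yields $Q$ in the even case and the family $\mathcal P^2_{n-2}$ in the odd case.
\end{itemize}

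\textbf{Main obstacle.} The hardest part is Step 3: proving rigorously that the $1$-planar drawing forces $H$ into these exact templates, rather than some other graph with the same edge count. I would attempt this by showing that every edge of $H$ crossing an apex edge is limited, so that $H$ has maximum degree $4$ and a cyclic order.

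Step 4 only determines these candidates up to second-order spectral comparisons. Deciding between the $K_2\vee$ and $2K_1\vee$ cases is exactly what the theorem leaves open.
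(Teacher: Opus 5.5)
\textbf{Context.} The paper does not prove this statement; it is quoted from Xu and Chang. It does, however, supply the tool that should replace your Step~2, and an example that breaks your Step~3.

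\textbf{Step~2.} Theorem~\ref{thm:F} applies to $F=K_5$, since $\delta(K_5)=4$, $K_5=K_2\vee K_3\subseteq K_2\vee P^{2+}_{n-2}$ and $K_5\not\subseteq K_2\vee I_{n-2}$. Its switching is not a one-edge swap. A single added edge $wu$ may be impossible to draw with at most one crossing, and it may close a $K_5$. In Claim~\ref{c6} the paper instead isolates all deficient vertices and reattaches them to \emph{both} apexes as degree-$2$ vertices, which lie in no $K_5$. It embeds them inside a region bounded by $zw_8,ww_8,zw_9,ww_9$, after deleting $E(G[W])$.

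\textbf{Step~3 is the real gap.} The containments $H\subseteq P^{2+}_{n-2}$ and $H\subseteq C^2_{n-2}$ are false, even under triangle-freeness or $K_4$-freeness respectively. Take $H_0=K_{1,6}\cup I_{n-9}$. It is triangle-free, and $K_2\vee H_0\subseteq K_2\vee(B_7\cup T_{n-9})$, which is $1$-planar (Figure~\ref{fig:B7-embedding}). So both $K_2\vee H_0$ and $2K_1\vee H_0$ are $K_5$-free $1$-planar graphs of your two types. Yet $\Delta(H_0)=6>4=\Delta(P^{2+}_{n-2})=\Delta(C^2_{n-2})$. What the drawing really gives is Fact~\ref{fact:Q-twin-pair}: each component of $H$ lies in $P^2_m$ only up to twin-pair modifications at its ends, and these create vertices of degree $5$ or $6$. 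So your fallback ``$\Delta(H)\le 4$ plus a cyclic order'' cannot be proved from $1$-planarity alone. You must show \emph{spectrally} that the extremal $H$ avoids such configurations.

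\textbf{Step~4, odd $n$.} $C^2_{n-2}$ is $K_4$-free, so nothing in your argument forces an edge deletion. The obstruction is a parity property of the drawing.
\begin{itemize}
\item Suppose the paths $uc_iu'$ are pairwise uncrossed, with $c_1,\dots,c_{n-2}$ in rotation order. Then each present edge $c_ic_{i+2}$ crosses exactly one of $uc_{i+1}$, $u'c_{i+1}$.
\item If $c_ic_{i+2}$ and $c_{i+1}c_{i+3}$ both exist, they must use opposite sides. Otherwise they cross inside the quadrilateral $uc_{i+1}u'c_{i+2}$, and each is crossed twice.
\item Around an odd cycle of long edges this alternation is impossible, so a \emph{long} edge must be deleted. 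Deleting a short edge does not help.
\end{itemize}
This is exactly what separates $C^{2-}_{n-2}$ from $C^2_{n-2}-u_1u_2$, which have the same size, so edge counting cannot make this distinction.

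\textbf{Step~4, edge counting.} ``$\rho$ is essentially determined by $e(H)$'' holds only for local changes. With $x_v\in[2/\rho,2/(\rho-6)]$, trading $\ell$ edges for $\ell+1$ is guaranteed to raise $\rho$ only for $\ell$ up to about $\rho/12$; the paper uses $\ell\le 30$ in Fact~\ref{fact:edge-modify}. Globally, write $f_H(\rho)=\frac{n-2}{\rho}+\frac{2e(H)}{\rho^2}+\frac{\sum_v d_H(v)^2}{\rho^3}+\cdots$. The third term can differ by $\Theta(n^{-1/2})$ between bounded-degree graphs of almost equal size, while one extra edge contributes only $2/\rho^2=\Theta(n^{-1})$. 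So ``maximize $e(H)$'' has to be carried out by bounded local improvement steps that preserve $1$-planarity and triangle- or $K_4$-freeness, not by a global edge count.
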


In view of Theorem~\ref{thm:K5}, it is natural to ask whether the uniqueness of its extremal graph can be established. Our first result described as follows fully addresses this question. 
\begin{thm}\label{thm:XK5}
Let $n$ be sufficiently large. Then 
$$
\operatorname{SPEX}_{\mathcal P_1}(n,K_5)=
\begin{cases}
\{2K_1\vee C^2_{n-2}\}, & \text{if } n \text{ is even},\\[2mm]
\{2K_1\vee C^{2-}_{n-2}\}, & \text{if } n \text{ is odd}.
\end{cases}
$$
\end{thm}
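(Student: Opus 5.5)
The plan is to combine Theorem~\ref{thm:K5} with a direct comparison of spectral radii. By Theorem~\ref{thm:K5}, for $n$ even it suffices to show $\rho(2K_1\vee C^2_{n-2})>\rho(K_2\vee Q)$. For $n$ odd it suffices to show $\rho(2K_1\vee C^{2-}_{n-2})>\rho(K_2\vee H)$ for every $H\in\mathcal P^2_{n-2}$. Each candidate has the form $2K_1\vee H'$ or $K_2\vee H'$, where $H'$ is a graph on $n-2$ vertices with $\Delta(H')\le 4$.

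\textbf{Step 1: a common eigenvalue equation.} Let $x$ be the Perron vector of such a graph $G$ with $\rho=\rho(G)$. The two dominating vertices are symmetric, so both carry the same entry $x_0$. Put $s=\sum_{v\in V(H')}x_v$. The eigen-equations at a dominating vertex give $(\rho-\varepsilon)x_0=s$, where $\varepsilon=1$ if the two dominating vertices are adjacent ($K_2$) and $\varepsilon=0$ otherwise ($2K_1$). Summing the eigen-equations over $V(H')$ gives
$$\rho s=2(n-2)x_0+\sum_{v}d_{H'}(v)x_v .$$
Since $\Delta(H')\le4$ and $\rho\sim\sqrt{2n}$, iterating $\rho x_v=2x_0+\sum_{u\in N_{H'}(v)}x_u$ gives
$$x_v=\frac{2x_0}{\rho}+\frac{2d_{H'}(v)x_0}{\rho^2}+O(\rho^{-3})x_0 .$$
Substituting this yields
$$\rho(\rho-\varepsilon)=2(n-2)+\frac{4e(H')}{\rho}+\frac{2\sum_v d_{H'}(v)^2}{\rho^2}+O(n\rho^{-3}).$$

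\textbf{Step 2: evaluate the exact extremal candidate.} For $2K_1\vee C^2_{n-2}$ the graph $H'$ is $4$-regular, so the equitable partition is exact. This gives $\rho^2-4\rho-2(n-2)=0$, i.e. $\rho=2+\sqrt{2n}$. For $C^{2-}_{n-2}$ only two vertices have degree $3$, so a one-edge perturbation shows $\rho=2+\sqrt{2n}-O(n^{-1})$.

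\textbf{Step 3: bound every $K_2\vee H'$ candidate.} Here $e(H')=\tfrac{3n}{2}+O(1)$, since $(3n-8)/2$ edges remain for $Q$, and similarly for $H\in\mathcal P^2_{n-2}$. Also $\sum_v d_{H'}(v)^2\le 4\cdot 2e(H')=12n+O(1)$. Using $\rho^2=2n+O(\rho)$, Step~1 becomes
$$\rho^2-\rho=2n+3\rho+O(1),\qquad\text{so}\qquad \rho=2+\sqrt{2n}+O(n^{-1/2}).$$
The two candidates therefore agree to leading order, and the comparison is decided by the constant term in the quadratic. The extremal graph has the exact relation $\rho^2-4\rho=2n-4$, so I must show that every $K_2\vee H'$ candidate satisfies
$$\rho^2-4\rho\le 2n-4-c$$
for some $c>0$.

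\textbf{Main obstacle: the constant-order terms.} Obtaining the $O(1)$ terms exactly is the hard part. I would handle them as follows.
\begin{itemize}
\item Keep $\rho$ exact in the term $4e(H')/\rho$. Writing $4e(H')=6n-\alpha$ with an explicit constant $\alpha$, this term equals $3\rho\cdot\frac{2n}{\rho^2}-\frac{\alpha}{\rho}$. Since $\rho^2=2n+4\rho+O(1)$, we get $\frac{2n}{\rho^2}=1-\frac{4}{\rho}+O(n^{-1})$. Hence $\frac{6n}{\rho}=3\rho-12+O(n^{-1/2})$, which contributes a constant $-12$.
\item Compute the second-order term $2\sum d^2/\rho^2\le 12$ precisely. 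Since $H'$ is triangle-free with average degree $3$, the degrees are essentially all $3$ (with $Q$ having degrees alternating near $3$), so this term is about $2\cdot 9n/(2n)=9$.
\end{itemize}
Collecting terms gives
$$\rho^2-\rho\approx 2n-4+3\rho-12+9=2n+3\rho-7 ,$$
that is, $\rho^2-4\rho\approx 2n-7<2n-4$.

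\textbf{Conclusion.} The constant gap of $3$ implies $\rho(K_2\vee H')<\rho(2K_1\vee C^{(2)\pm}_{n-2})-\Omega(n^{-1/2})$, which dominates the $O(n^{-1})$ defect from Step~2 in the odd case. Together with Theorem~\ref{thm:K5}, this proves uniqueness in both parities.
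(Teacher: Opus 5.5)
Your expansion is set up correctly and is a genuinely different route from the paper, but there is one genuine gap, and it sits exactly at the step that decides the comparison. As you note yourself, the crude bound $\sum_v d_{H'}(v)^2\le 4\cdot 2e(H')=12n+O(1)$ only yields $\rho^2-4\rho\le 2n-4+O(n^{-1/2})$. That gives no separation at all from $2K_1\vee C^2_{n-2}$, so the whole argument rests on proving $\sum_v d_{H'}(v)^2\le (12-c)n$.

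The reason you give is not a valid inference: ``$H'$ is triangle-free with average degree $3$, so the degrees are essentially all $3$.'' Take copies of $K_{4,4}$ on about three quarters of the vertices plus isolated vertices. This graph is triangle-free, has $\Delta=4$ and average degree $3$, and satisfies $\sum_v d(v)^2\approx 12n$. So you must use the specific structure of the candidates.

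\begin{itemize}
\item For $Q$ this is a direct check. $Q$ is $3$-regular except $d_Q(u_2)=d_Q(u_{n-3})=2$; nothing ``alternates''.
\item For $H\in\mathcal P^2_{n-2}$ you need an actual argument, for instance the following. $P^{2+}_{n-2}$ has exactly the $n-4$ triangles $T_j=u_ju_{j+1}u_{j+2}$, and every edge lies in at most two of them. Since $2\cdot\frac{n-3}{2}-(n-4)=1$, there is at most one unit of slack: either one deleted edge lies in only one triangle, or one triangle contains two deleted edges.
\item Any two deleted edges at an interior vertex $u_j$ use this slack. A pair from $u_{j-2}u_j,u_{j-1}u_j,u_ju_{j+1},u_ju_{j+2}$ either contains an edge lying in one triangle only, or is $\{u_{j-1}u_j,u_ju_{j+1}\}$, which double-hits $T_{j-1}$. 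Hence only $O(1)$ vertices meet two or more deleted edges.
\item Let $\ell_v$ be the number of deleted edges at $v$, so $\sum_v\ell_v=n-3$. Then $\sum_v d_H(v)^2=8n+\sum_v\ell_v^2+O(1)=9n+O(1)$.
\end{itemize}

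With that in place, your proof works once the ``$\approx$'' signs are replaced by the $O(n^{-1/2})$ bookkeeping you sketched. The bootstrap goes from $\rho^2=2n+O(\sqrt n)$ to $\rho^2=2n+4\rho+O(1)$, and then to $\rho^2-4\rho=2n-7+O(n^{-1/2})$. Your odd-case lower bound $\rho(2K_1\vee C^{2-}_{n-2})\ge 2+\sqrt{2n}-O(n^{-1})$, via the Rayleigh quotient at the Perron vector of $2K_1\vee C^2_{n-2}$, is fine.

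The paper avoids asymptotics altogether. It shows $\rho(H)\le 3$ for every candidate $H$: in all configurations but one this follows from $\Delta(H)\le 3$, and for the configuration with a degree-$4$ vertex it uses the positive-semidefiniteness computation of Lemma~\ref{lem:3.2}. Lemma~\ref{lem:3.1} then gives $\rho(K_2\vee H)\le 2+\sqrt{2n-3}$, which is exactly your $\rho^2-4\rho\le 2n-7$. Lemma~\ref{lem:3.3} gives $\rho(2K_1\vee C^{2-}_{n-2})>2+\sqrt{2n-3}$ using a two-vector test space, and the even case follows by monotonicity.

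The paper's bound is exact, but it is sensitive to every single exceptional vertex, which is why it needs Lemma~\ref{lem:3.2}. Your route uses only degree statistics, so bounded defects are harmless, at the price of error tracking. Both routes still require the structural description of $\mathcal P^2_{n-2}$.
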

It is worth noting that Zhang, Huang and Dong \cite{Zhang2026edge} obtained the Tur\'an number for $n$-vertex $K_5$-free 1-planar graphs, in which they established that every $K_5$-free $1$-planar graph on $n$ vertices has at most $4n-8$ edges, and the upper bound is best possible. The tight examples for $n=8$ and all $n\ge 10$ were given. From Theorem~\ref{thm:XK5}, one sees that the size of the unique extremal graph $2K_1\vee C_{n-2}^2$ (for even $n$) is $4n-8$, whereas the size of the unique extremal graph $2K_1\vee C_{n-2}^{2-}$ (for odd $n$) is $4n-9$. These observations indicate that the spectral extremal graphs  and the edge-extremal graphs for $K_5$-free 1-planar graphs do not coincide.

In our second main result, we devote ourselves to establishing a structural reduction theorem for 1-planar graphs, which gives some sufficient conditions to ensure every spectral extremal graph under consideration contains a spanning complete bipartite graph $K_{2,n-2}$, together with its spectral properties.
\begin{thm}\label{thm:F}
Given a graph $F$ with minimum degree $\delta(F)\ge 2$ satisfying $F\subseteq K_2\vee P_{n-2}^{2+}$ and $F\not\subseteq K_2\vee I_{n-2}$, let $G$ be a connected $F$-free $1$-planar graph of order $n$ with maximum spectral radius, and let $\mathbf{x}$ be its Perron vector normalized so that $\max_{v\in V(G)} x_v=1$,  where $n\ge \max\{10^{19},2|V(F)|\}$.  Then each of the following holds:
\begin{wst}
\item[{\rm (i)}]$\rho(G)>\sqrt{2n}.$
\item[{\rm (ii)}]There exist vertices $z,w$ in $V(G)$ such that $N_G(z)\cap N_G(w)=V(G)\setminus\{z,w\}$ and $x_z=x_w=1$.
\item[{\rm (iii)}]For all $u\in V(G)\setminus\{z,w\}$, $\frac{2}{\rho(G)}\le x_u\le \frac{2}{\rho(G)-6}$.
\end{wst}
\end{thm}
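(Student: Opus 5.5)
The plan follows the standard Tait--Tobin type argument used for planar and $1$-planar spectral extremal problems, adapted to $F$-freeness. The hypothesis $F\not\subseteq K_2\vee I_{n-2}$ says that $K_2\vee I_{n-2}=K_{2,n-2}$ plus the edge $zw$ is $F$-free. It is also $1$-planar, in fact planar. Hence $G$ is at least as good as this graph.

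\textbf{Step 1: the lower bound (i).} We have
\[
\rho(G)\ge\rho(K_2\vee I_{n-2})=\frac{1+\sqrt{8n-15}}{2}>\sqrt{2n}
\]
for large $n$. We will also use the edge bound $e(G)\le 4n-8$ for $1$-planar graphs. This gives $\rho(G)\le\sqrt{2e(G)}<\sqrt{8n}$ and $\sum_v d_v\le 8n$.

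\textbf{Step 2: two heavy vertices.} Let $z$ satisfy $x_z=1$. The key tool is the bipartite $1$-planar bound: any $1$-planar bipartite subgraph on parts $S,T$ has at most about $3(|S|+|T|)$ edges, and $K_{3,t}$-type obstructions are limited. Fix a small $\varepsilon$ and let $L=\{v: x_v\ge\varepsilon\}$. We estimate
\[
\rho^2 x_v=\sum_{u\sim v}\sum_{y\sim u}x_y .
\]
Applying this with the edge bound to $L$ versus $V$ shows $|L|$ is bounded. Iterating the eigen-equation then shows that each vertex $v\in L$ with $x_v$ close to $1$ has degree $d_v\ge(1-c\varepsilon)n$. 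At most two vertices can have linear degree, since three such vertices would yield $K_{3,m}$ with $m$ large, and $K_{3,m}$ is not $1$-planar for large $m$. Comparing $\rho^2\ge 2n$ with $\rho^2 x_z\le d_z+\sum_{u\sim z}\sum_{y\sim u,\,y\ne z}x_y$ forces a second vertex $w$ with $x_w\ge 1-O(\varepsilon)$ and $d_w=n-o(n)$. The vertices other than $z,w$ have $x_u=O(1/\sqrt n)$.

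\textbf{Step 3: full common neighbourhood and $x_z=x_w=1$.} This is the main obstacle. Suppose some $u\notin\{z,w\}$ is not adjacent to, say, $w$. Every vertex $u\ne z,w$ has bounded "local" weight. We want to delete the edges of $u$ other than $uz$, add $uz$ and $uw$, and check that the new graph $G'$ is still $1$-planar and $F$-free while $\mathbf{x}^T(A(G')-A(G))\mathbf{x}>0$.

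\emph{$1$-planarity.} Reinsert $u$ in a face adjacent to both $z$ and $w$ in the drawing. Such faces exist since $z$ and $w$ share linearly many neighbours forming a near-planar fan structure.

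\emph{$F$-freeness.} Here we use $\delta(F)\ge 2$ and $F\subseteq K_2\vee P_{n-2}^{2+}$. Any copy of $F$ in $G'$ must use $u$. Since $u$ has degree $2$ in $G'$, with neighbours $z$ and $w$, the copy can be rerouted: $u$ is replaced by an unused common neighbour of $z$ and $w$ in $G$. Such a vertex exists because there are $n-o(n)>|V(F)|$ of them; this is where $n\ge 2|V(F)|$ enters. The rerouting gives a copy of $F$ in $G$, a contradiction.

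\emph{Spectral gain.} The loss is $2x_u\sum_{y\sim u,\,y\ne z}x_y$. This is small because non-$z,w$ entries are $O(1/\sqrt n)$ and, after choosing $u$ well, $u$ has few such neighbours. The gain is $2x_u x_w\approx 2x_u$.

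Extremality then gives $N(z)\cap N(w)=V\setminus\{z,w\}$. By the symmetric eigen-equations, $\rho x_z-\rho x_w$ differs only through the edges inside $V\setminus\{z,w\}$. A switching argument then yields $x_z=x_w$, so both equal $1$. The first thing to try is exchanging the neighbourhoods of $z$ and $w$ inside the rest.

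\textbf{Step 4: the bounds (iii).} For $u\ne z,w$ we have $\rho x_u=x_z+x_w+\sum_{y\in N(u)\setminus\{z,w\}}x_y\ge 2$, giving the lower bound. The graph $G-\{z,w\}$ is $1$-planar, and since $z$ and $w$ are adjacent to everything, its degrees are at most $6$ at the maximizing vertex (an extremal structural fact derived from the crossing structure). So at the $u$ maximizing $x_u$ we get $\rho x_u\le 2+6x_u$, which gives $x_u\le 2/(\rho-6)$.
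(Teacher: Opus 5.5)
Your overall architecture matches the paper's: the lower bound from $K_2\vee I_{n-2}$, two heavy vertices, a local move with rerouting through an unused common neighbour, and $K_{3,7}$-freeness for (iii). The proof breaks, however, at the $1$-planarity of the modified graph in Step~3.

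You re-insert $u$ ``in a face adjacent to both $z$ and $w$'' and justify its existence by the many common neighbours. That implication is false. Each lens between consecutive common neighbours $b_i,b_{i+1}$ can be cut by edges of $G[B]$, for instance $b_ib_{i+1}$ drawn inside it, or $b_{i-1}b_{i+1}$ crossing a spoke. All lenses can be blocked at once: in the standard drawing of $2K_1\vee C^2_{n-2}$ ($n$ even, $z$ inside and $w$ outside the cycle, every quadrangle crossed by both diagonals), $z$ and $w$ have $n-2$ common neighbours but share no face.

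The paper has to \emph{create} room and pay for it. Put $B=N(z)\cap N(w)$ and $S_1=V\setminus(B\cup\{z,w\})$. Every vertex of $S_1$ has at most $6$ neighbours in $B$ (by $K_{3,7}$-freeness) and $|B|>99|S_1|$. Hence there are $16$ consecutive common neighbours $w_1,\dots,w_{16}$ around $z$ with no neighbour in $S_1$. By $1$-planarity, no edge from outside $W=\{w_1,\dots,w_{16}\}$ can reach $w_8$ or $w_9$, since it would cross two spokes. So deleting the at most $32$ edges of $G[W]$ frees the region bounded by $zw_8,ww_8,zw_9,ww_9$. This deletion costs $\sum_{ab\in E(G[W])}x_ax_b\le 32(2.5/\rho)^2=O(1/n)$. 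It is beaten only because the gain is at least $0.02\,x_{v_0}\ge 0.02\,x_w/\rho$ for some $v_0\in S_1$ adjacent to $z$ or $w$. Your sketch never supplies this lower bound on the weight of the moved vertex. When no vertex of $S_1$ touches $z$ or $w$, the paper sidesteps the issue by attaching $S_1$ as pendants at $z$, which needs no common face.

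Step~2 also has a quantitative hole. The bipartite bound you cite, of order $3(|S|+|T|)$, is too weak by exactly the margin that matters. Since $\rho^2\approx 2n$, the averaging needs $e(L,S)\le 2n+o(n)$ to force both a second vertex with $x_w\ge 1-O(\varepsilon)$ and the bounds $|\overline{N}_S(z)|,|\overline{N}_S(w)|=O(\varepsilon n)$. With $3n$, the weighted average of $x$ over edges from $L\setminus\{z\}$ to $S$ is only guaranteed to be about $1/2$, and the estimate for non-neighbours of $z$ becomes vacuous. The paper gets the sharp bound from Lemma~\ref{lem:bi1planar}, namely $e(L,S)\le 2n+4|L|-12$ with $|L|=O(\sqrt n/\eta)$. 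Alternatively, you would first have to bootstrap to $|L|=O(1)$ and then use $K_{3,7}$-freeness.

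The paper also needs the uniform bound $x_v\ge 1/\rho$ for all $v$. This converts ``the non-neighbours of $z$ have small total weight'' into ``they are few.'' It is proved by re-attaching a light vertex as a pendant at $z$, and this is where $\delta(F)\ge 2$ enters. Your rerouting step needs neither $\delta(F)\ge 2$ nor $F\subseteq K_2\vee P^{2+}_{n-2}$.

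Minor points:
\begin{itemize}
\item $x_u=O(1/\sqrt n)$ for all $u\ne z,w$ is not available at that stage. The paper only proves $x_u<521\eta$, which suffices.
\item Once $N(z)\setminus\{w\}=N(w)\setminus\{z\}$, the two eigen-equations give $x_z=x_w$ immediately, with no switching argument.
\item $d_B(v)\le 6$ in Step~4 holds for every $v\in B$, simply because $G$ is $K_{3,7}$-free.
\end{itemize}
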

Based on Theorem \ref{thm:F}, we may prove our final main result, that characterizes the unique extremal graph achieving the maximum spectral radius among all $n$-vertex $C_5$-free or $2C_5$-free 1-planar graphs.
To state our result precisely, we first introduce the following notation. Define the graph $T_s$ as 
\begin{equation}\label{e1.1}
T_s:=
\begin{cases}
qK_3, & \text{if } s = 3q,\\
(q-2)K_3 \cup K_{1,6}, & \text{if } s = 3q+1,\ q\ge 2,\\
(q-1)K_3 \cup K_{1,4}, & \text{if } s = 3q+2,\ q\ge 1.
\end{cases}
\end{equation}
\begin{thm}\label{thm:C5}
If $n\ge10^{19}$, then $\operatorname{SPEX}_{\mathcal P_1}(n,C_5)=\{K_2\vee I_{n-2}\},$ and $\operatorname{SPEX}_{\mathcal P_1}(n,2C_5)=\{K_2\vee(B_7\cup T_{n-9})\},$ where $B_7=K_1\vee 2K_3$ and $T_{n-9}$ is defined in \eqref{e1.1}.
\end{thm}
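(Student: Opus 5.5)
We prove Theorem~\ref{thm:C5} by treating the two forbidden graphs separately. Both $C_5$ and $2C_5$ have minimum degree $2$ and lie in $K_2\vee P^{2+}_{n-2}$ for large $n$, so we first check that neither lies in $K_2\vee I_{n-2}$. Then Theorem~\ref{thm:F} applies.

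\textbf{Applicability of Theorem~\ref{thm:F}.} In $K_2\vee I_{n-2}$, with $K_2=zw$, every cycle alternates between $\{z,w\}$ and $I_{n-2}$, apart from possibly using the edge $zw$. A cycle therefore contains at most two vertices of $I_{n-2}$, since those vertices can only be joined through $z$ and $w$. So every cycle has length at most $4$. In particular $C_5\not\subseteq K_2\vee I_{n-2}$, and hence also $2C_5\not\subseteq K_2\vee I_{n-2}$. This same observation shows that $K_2\vee I_{n-2}$ is itself $C_5$-free, and it is $1$-planar, indeed planar.

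For an extremal $G$, Theorem~\ref{thm:F} gives vertices $z,w$ adjacent to all other vertices, with $x_z=x_w=1$ and $x_u=\Theta(1/\sqrt n)$ on $R:=V(G)\setminus\{z,w\}$. We split $G$ into $K_{2,n-2}$ plus a graph $G[R]$ and possibly the edge $zw$. Adding $zw$ creates no $C_5$ beyond those already present, and by Perron--Frobenius it strictly increases $\rho$, so we may assume $zw\in E(G)$.

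\textbf{Case $F=C_5$.} Suppose $G[R]$ contains an edge $uv$. Then $z\,u\,v\,w\,t\,z$ is a $5$-cycle for any other $t\in R$, which is impossible. Hence $G[R]$ is empty and $G=K_2\vee I_{n-2}$.

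\textbf{Case $F=2C_5$.} We must find which $H=G[R]$ keep $K_2\vee H$ both $2C_5$-free and $1$-planar. The plan is as follows.

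(a) Characterize $H$. A $C_5$ in $K_2\vee H$ uses $z$, $w$, or both. Two disjoint $C_5$'s can use the vertices of $\{z,w\}$ at most once in total. Suppose $H$ has two edges in distinct components, or more generally two disjoint paths $P_2$. Then $z$ together with one $P_2$ and further vertices forms one $5$-cycle, and $w$ with the other forms a second, for instance $z\,a\,b\,c\,d\,z$ when $H$ contains a $P_4$ $abcd$. Carefully enumerating these configurations, the structural conclusion should be: $H$ has at most one component that contains a cycle-rich structure ($P_4$ or $P_3$ plus extra), and every other component contains no path on $4$ vertices. That means each other component is a triangle or a star, since $P_4$-free connected graphs are exactly triangles and stars. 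The special component must itself be $C_5$-free when combined with the other vertex. One candidate is $B_7=K_1\vee2K_3$.

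(b) Combine with $1$-planarity of $K_2\vee H$. This forces $H$ to be essentially outer-$1$-planar, with bounded star sizes and so on. This explains the caps $K_{1,4}$ and $K_{1,6}$ when $3\nmid s$.

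(c) Optimize $\rho$. We use the eigen-equation $\rho x_u = 2+\sum_{v\sim u, v\in R}x_v$ together with (iii). This gives
\[
\rho(K_2\vee H)\approx \sqrt{2n}+\frac{c\,e(H)}{n}+\cdots,
\]
so to first order we maximize $e(H)$. Ties are then broken at second order by $\sum_u d_H(u)^2$ (walk counts). Among the admissible $H$ this selects $B_7\cup T_{n-9}$: triangles give $e=s$ per vertex, and the residues modulo $3$ are handled by $K_{1,4}$ or $K_{1,6}$ replacing triangles.

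\textbf{Main obstacle.} The hardest part is step (a) with (b): the exhaustive structural characterization of the admissible $H$. The comparison between candidates with equal edge counts, for example $K_{1,6}$ versus two triangles plus a $P_2$-type remainder, needs a precise second-order expansion of $\rho$. We would attempt that expansion via $x_u=\frac{2}{\rho}\bigl(1+\frac{d_H(u)}{\rho}+O(\rho^{-2})\bigr)$.
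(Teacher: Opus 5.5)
\textbf{The $C_5$ case.} Your argument is correct and matches the paper. The cycle $z\,u\,v\,w\,t\,z$ avoids the edge $zw$, so $G[R]$ is empty. Hence $G\subseteq K_2\vee I_{n-2}$, which is planar and $C_5$-free, and maximality gives equality. You should also record that an extremal $G$ is connected before invoking Theorem~\ref{thm:F}.

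\textbf{The $2C_5$ case has a genuine gap.} Steps (a)--(b) only say what the structure ``should be'' and that $B_7$ is ``one candidate''. The decisive step is missing: that $H$ has a component $Q$ containing a $P_4$, and that $Q\cong B_7$. Your opening claim in (a) is also wrong. Two disjoint edges of $H$ do not give $2C_5$, since $K_2\vee qK_3$ is $2C_5$-free. What does hold is that $H$ is $2P_4$-free, via $z\,a_1a_2a_3a_4\,z$ and $w\,b_1b_2b_3b_4\,w$.

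The paper's argument needs the following ingredients:
\begin{itemize}
\item $\Delta(H)\le 6$, because $1$-planar graphs are $K_{3,7}$-free (Lemma~\ref{lem:K37}). This, not any ``outer-$1$-planarity'', is what caps stars at $K_{1,6}$.
\item $H$ is $C_5$-free. This does not follow from $2C_5$-freeness alone; it needs an edge-count comparison.
\item A bound on $|V(Q)|$.
\item An analysis of a minimal-crossing drawing of $K_2\vee H$. List $V(Q)$ as $q_1,\dots,q_m$ around $z$. An edge of $\{zq_i,wq_i\}$ can cross one of $\{zq_j,wq_j\}$ only for the end pairs $\{q_1,q_2\}$ and $\{q_{m-1},q_m\}$. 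This forces $Q\subseteq P_m^2$ up to the extra edges $q_1q_4$ and $q_{m-3}q_m$.
\end{itemize}
A path argument (large $m$ gives $P_8\supseteq 2P_4$) and an edge count then give $m\le 7$. Since $e(P_7^2)=11<12=e(B_7)$, this forces $Q\cong B_7$. The existence of $Q$ comes from swapping a few triangles (or the star) of $T_{n-2}$ for $B_7$, which gains edges. Without an argument of this kind, nothing in your plan excludes other dense components or explains why exactly $B_7$ appears.

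\textbf{Two further steps fail as written.}

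First, ``adding $zw$ \dots\ we may assume $zw\in E(G)$'' ignores $1$-planarity. In general $G+zw$ need not be $1$-planar; for instance, $2K_1\vee C_{n-2}^2$ already has $4n-8$ edges. The paper's fix:
\begin{itemize}
\item Pick $u\in M$.
\item Delete the at most $6$ edges of $H$ at $u$, and the at most $2$ edges crossing $zu$ or $uw$.
\item Draw $zw$ alongside the path $zuw$.
\item By the Rayleigh quotient, the gain $2x_zx_w=2$ beats the $O(1/\rho)$ loss.
\end{itemize}
Your rerouting of $zwabcz$ to $zabwcz$ then gives $2C_5$-freeness.

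Second, step (c), ``maximize $e(H)$ to first order and break ties by $\sum_u d_H(u)^2$'', is not a valid global principle. One extra edge moves $\rho$ by about $1/n$, while a change of order $n$ in $\sum_u d_H(u)^2$ moves it by order $n^{-1/2}$. You must localize, as the paper does:
\begin{itemize}
\item Deleting $\ell\le 30$ edges and adding at least $\ell+1$ raises $\rho$, because $2/\rho\le x_u\le 2/(\rho-6)$.
\item Equal-edge ties are settled exactly by $\rho-1=2\sum_J f_J(\rho)$ (Lemma~\ref{lem:f-principle}). Examples are $K_{1,4}$ versus $K_3\cup K_2$, and $K_{1,6}$ versus $2K_3\cup K_1$ or $K_3\cup K_{1,3}$.
\end{itemize}
Your $\sum d^2$ heuristic does pick the right winners there ($20$ vs.\ $14$, and $42$ vs.\ $24$), since it is the leading correction in $f_J$. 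So that intuition is sound once it is made component-local.
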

\noindent{\bf Organization.}\ In the remainder of this section, we give some necessary notation and definitions. In Section \ref{s2}, we introduce some preliminary results. In Section \ref{s3}, we give the proof for Theorem~\ref{thm:XK5}, which characterizes the unique graph among $K_5$-free 1-planar graphs on $n$ vertices attaining maximum spectral radius. In Section \ref{s4}, we offer the proof for Theorem~\ref{thm:F}, which is fundamental to the proof of Theorem~\ref{thm:C5}. In Section \ref{s5}, we provide the proof of Theorem~\ref{thm:C5}. Concluding remarks and some related open problems are presented in the final section.\vspace{2mm}

\noindent{\bf Notation.}\ For a vertex $v\in V(G)$, let $N_G(v)$ and $d_G(v)$ denote its neighborhood and degree in $G$, respectively; when no confusion arises, we simply write $N(v)$ and $d(v)$. For a vertex subset $S\subseteq V(G)$, let $G[S]$ denote the subgraph of $G$ induced by $S$. For two disjoint subsets $R,S\subseteq V(G)$, let $G[R,S]$ denote the bipartite subgraph with vertex set $R\cup S$ and edge set consisting of all edges of $G$ with one endpoint in $R$ and the other in $S$. We write $e(R)$ and $e(R,S)$ for the numbers of edges in $G[R]$ and $G[R,S]$, respectively.
We use $\mathcal{C}(G)$ to denote the set of connected components of $G$. 

Let $F$ and $H$ be two graphs, define $F\cup H$ to be their \textit{disjoint union}. Then $F \vee H$ is defined to be their \textit{join} obtained from $F\cup H$ by adding edges to connect each vertex of $F$ with all vertices of $H$. As usual, let $P_n, C_n, K_n, K_{a,n-a}$, and $I_n$ denote the path, cycle, complete graph, complete bipartite graph and empty graph on $n$ vertices, respectively. 

Let  $A(G)$ be the adjacency matrix of an $n$-vertex graph $G$. Clearly, $A(G)$ is real symmetric. Consequently, its eigenvalues are real and we can arrange them as  $\lambda_1(G)\geqslant \cdots \geqslant \lambda_n(G)$. The \textit{spectral radius}, $\rho(G)$, of $G$ is $\max\{|\lambda_1(G)|,\ldots,|\lambda_n(G)|\}$. Since the adjacency matrix $A(G)$ is irreducible and nonnegative for a connected graph, by Perron-Frobenius theorem, the largest eigenvalue $\lambda_1(G)$ is equal to $\rho(G)$, and there exists a positive eigenvector, say $\mathbf{x}$, of $A(G)$ corresponding to $\rho(G)$. One calls  $\mathbf{x}$ the \textit{Perron vector} of $G$, and its coordinate $x_v$ can be seen as the weight of vertex $v$ in $V(G)$.

\section{\normalsize Preliminaries}\label{s2}
We first collect several essential lemmas needed throughout our arguments.
\begin{lem}[\cite{Bapat2014}] \label{lem:Rayleigh}
Let $G$ be a connected graph of order $n$. For arbitrary vectors $\mathbf{x},\mathbf{y}\in\mathbb{R}^n$ with $\mathbf{x}\neq \mathbf{0}$ and $\mathbf{y}>\mathbf{0}$, the spectral radius $\rho(G)$ of the adjacency matrix $A(G)$ satisfies
\[
\frac{\mathbf{x}^\top A(G)\mathbf{x}}{\mathbf{x}^\top \mathbf{x}}
\le \rho(G)
\le \max_{i}\frac{(A(G)\mathbf{y})_i}{y_i}.
\]
Equality holds in the left inequality if and only if $\mathbf{x}$ is a Perron eigenvector of $A(G)$ associated with $\rho(G)$; equality holds in the right inequality if and only if $\mathbf{y}$ is also such a Perron eigenvector.
\end{lem}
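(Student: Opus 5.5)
The statement immediately above is Lemma~\ref{lem:Rayleigh}, the two-sided Rayleigh-quotient/Collatz--Wielandt bound for a connected graph. I will prove the two inequalities separately, using only the spectral theorem for the real symmetric matrix $A=A(G)$ and the Perron--Frobenius theorem for irreducible nonnegative matrices, as recalled in the Notation paragraph.

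For the left inequality, I diagonalize $A$ with an orthonormal eigenbasis $\mathbf{v}_1,\dots,\mathbf{v}_n$ whose eigenvalues are $\lambda_1\ge\cdots\ge\lambda_n$. Writing $\mathbf{x}=\sum c_i\mathbf{v}_i$ gives
\[
\mathbf{x}^\top A\mathbf{x}=\sum\lambda_i c_i^2\le\lambda_1\sum c_i^2=\lambda_1\,\mathbf{x}^\top\mathbf{x}.
\]
Since $G$ is connected, $\lambda_1=\rho(G)$. Equality forces $c_i=0$ whenever $\lambda_i<\lambda_1$, so $\mathbf{x}$ lies in the $\rho$-eigenspace. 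By Perron--Frobenius this eigenspace is one-dimensional and spanned by the Perron vector, so $\mathbf{x}$ is a (nonzero multiple of the) Perron eigenvector. The converse is immediate.

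For the right inequality, let $\mathbf{z}>\mathbf{0}$ be the Perron vector and set $M=\max_i (A\mathbf{y})_i/y_i$, so that $A\mathbf{y}\le M\mathbf{y}$ entrywise. Using symmetry of $A$,
\[
\rho\,\mathbf{z}^\top\mathbf{y}=(A\mathbf{z})^\top\mathbf{y}=\mathbf{z}^\top A\mathbf{y}\le M\,\mathbf{z}^\top\mathbf{y}.
\]
Because $\mathbf{z}^\top\mathbf{y}>0$, this gives $\rho\le M$. If equality holds, then $\mathbf{z}^\top(M\mathbf{y}-A\mathbf{y})=0$. Here $\mathbf{z}>0$ and $M\mathbf{y}-A\mathbf{y}\ge0$, so $A\mathbf{y}=\rho\mathbf{y}$, which makes $\mathbf{y}$ a positive $\rho$-eigenvector, hence a Perron vector. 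Conversely, if $\mathbf{y}$ is a Perron vector, every ratio $(A\mathbf{y})_i/y_i$ equals $\rho$.

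The only step that needs care is the equality case of the left inequality. There I must invoke simplicity of $\rho$ for connected $G$, which irreducibility of $A$ guarantees, and read "Perron eigenvector" up to a nonzero scalar multiple, since $\mathbf{x}$ is allowed to be negative.
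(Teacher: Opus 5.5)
Your proof is correct. The paper does not prove this lemma at all; it is quoted from Bapat's book. Your argument is the standard one: spectral decomposition of $A(G)$ for the Rayleigh-quotient bound, and pairing $A\mathbf{y}\le M\mathbf{y}$ against the Perron vector $\mathbf{z}$ via symmetry for the Collatz--Wielandt bound. The equality cases then follow from $\mathbf{z}>\mathbf{0}$ and the simplicity of $\rho(G)$.

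You are right to flag that the left equality clause holds only up to a nonzero scalar. The vector $-\mathbf{z}$ attains equality without being positive, so ``Perron eigenvector'' there must be read as ``eigenvector for $\rho(G)$''.

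Your argument also needs connectivity only for the equality clauses. The identity $\lambda_1=\rho(G)$ holds for every nonnegative symmetric matrix. The right-hand inequality goes through with any nonzero $\mathbf{z}\ge\mathbf{0}$, since $\mathbf{z}^\top\mathbf{y}>0$ still holds. This is what justifies the paper's use of the lower bound for $A(H)$ in Lemma~\ref{lem:3.1}, where $H$ is an arbitrary graph and need not be connected.
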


Let $\mathbf{x}=(x_{u_1},x_{u_2},\dots,x_{u_n})^\top$ denote a Perron vector corresponding to $\rho(G)$. The eigenvector equation for the adjacency matrix $A(G)$ reads
\begin{equation}\label{Ax_u}
\rho(G)x_u = \big(A(G)\mathbf{x}\big)_u = \sum_{v\sim u} x_v
\end{equation}
for every vertex $u\in V(G)$.

The next two lemmas supply sharp upper bounds on the number of edges of (bipartite) $1$-planar graphs, which are fundamental structural constraints for our subsequent spectral analysis.
\begin{lem}[\cite{Fabrici2007Madaras}] \label{lem:1planar}
If $G$ is an $n$-vertex $1$-planar graph with $n\ge 3$, then $e(G)\le 4n-8.$
\end{lem}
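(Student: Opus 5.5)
The statement to prove is Lemma~\ref{lem:1planar}, the classical edge bound for $1$-planar graphs. I will use a planarization argument that reduces it to Euler's formula for planar graphs.

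Fix a $1$-planar drawing of $G$ in which adjacent edges do not cross. Choose $G$ to be edge-maximal among $1$-planar graphs on the same vertex set, with a drawing that has the fewest crossings. This is harmless, since adding edges only strengthens the claim. In each crossing pair the two edges have four distinct endpoints $a,b,c,d$, with $ac$ crossing $bd$. By maximality, all four edges $ab,bc,cd,da$ can be assumed present. Each can be drawn along the crossing without creating new crossings, or it already exists, and a redrawing lets us assume it is uncrossed.

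Let $c$ be the number of crossings, and delete one edge from each crossing pair. The remaining graph $G'$ is drawn without crossings, so it is planar. It has $e(G)-c$ edges, so Euler's formula gives $e(G)-c\le 3n-6$.

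Next I need an upper bound on $c$ in terms of $n$. Planarize $G$ by replacing each crossing with a dummy vertex of degree $4$. The planarization $G^\times$ has $n+c$ vertices and $e(G)+2c$ edges. The kite structure from the previous paragraph means each dummy vertex lies on four triangular faces, and these faces are distinct for different crossings. Contract or ignore the dummy vertices, and count, in the plane graph $G''$ made of the uncrossed edges plus one edge per crossing pair (namely $G'$), the faces containing a crossing edge. Each crossing pair occupies a quadrilateral region $abcd$ of $G'$ that is split by its kept diagonal into two triangular faces of $G'$. A triangulated plane graph $G'$ has at most $2n-4$ faces, and each crossing uses two of them, so $c\le (2n-4)/2=n-2$. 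Combining this with $e(G)\le 3n-6+c$ gives $e(G)\le 4n-8$.

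The main obstacle is justifying that the kites can be assumed present and that different crossings give disjoint pairs of faces of $G'$. Two crossings cannot share a triangular face, because the face's boundary edges $ab,bc$ are uncrossed and would bound the same crossing region. For $n=3$ the bound $4n-8=4\ge 3$ holds trivially, and small $n$ needs no special handling beyond $n\ge 3$ for Euler's formula.
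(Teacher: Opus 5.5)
The paper does not prove this lemma; it quotes it from Fabrici and Madaras. Your argument therefore has to stand on its own, and it has a genuine gap.

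The first half is fine. Deleting one edge from each crossing pair leaves a simple plane graph $G'$, so $e(G)-c\le 3n-6$. Also, $f(G')\le 2n-4$ holds for every simple plane graph on $n\ge 3$ vertices, so $G'$ need not be triangulated.

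The gap is the bound $c\le n-2$. Edge-maximality plus crossing-minimality only tells you that, for a crossing $X$ of $ac$ and $bd$, the edges $ab,bc,cd,da$ exist and are uncrossed. It does not tell you that they are drawn tightly around $X$, i.e.\ that the cycle $abcd$ bounds a region containing nothing but the two crossing edges. Your key sentence, that the kept diagonal splits this region into two triangular faces of $G'$, needs exactly that emptiness.

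You can redraw a single uncrossed edge $ab$ to hug $aX\cup Xb$, but the same edge may be a kite edge of several crossings. Suppose $ac$ crosses $bd$ at $X$ and $ae$ crosses $bf$ at $Y$, and the sectors $\angle aXb$ and $\angle aYb$ both face the region bounded by $aX\cup Xb\cup bY\cup Ya$. Then $ab$ can hug only one of the two crossings. This pocket is sealed off by already-crossed half-edges, so anything inside attaches only to $a$ and $b$, and it may well contain vertices; degree-$2$ ``hermits'' in such pockets are a known feature of sparse edge-maximal 1-planar graphs. For the other crossing, the triangle formed by $ab$, the adjacent kite edge and the kept diagonal encloses those vertices, whichever diagonal you keep, so it is not a face of $G'$. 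Hence the assignment of two private triangular faces to each crossing is not established. Your disjointness remark only helps once the faces are already known to be thin triangles.

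A repair that needs no maximality is to charge crossings to the faces of $G'$ directly.
\begin{itemize}
\item Each deleted edge crosses exactly one edge of $G'$, and its two halves lie in the faces of $G'$ on either side of that edge.
\item Inside a face $F$ whose boundary walk has length $|F|$, these halves are pairwise non-crossing arcs. Each joins a boundary vertex to an interior point of a non-incident boundary edge, and each boundary edge is hit at most once.
\item Cutting $F$ along one such arc and inducting shows there are at most $|F|-2$ of them.
\end{itemize}
Summing over faces, for connected $G'$ (the general case is bookkeeping), gives $2c\le\sum_F(|F|-2)=2e(G')-2f(G')=2n-4$ by Euler's formula. Hence $c\le n-2$ and $e(G)\le 4n-8$.

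Alternatively, you can keep your picture by making the kites private. Give every crossing its own four hugging curves, allowing parallel edges, and delete one edge from each empty digon. Then apply Euler's formula to the resulting loopless plane multigraph, in which every face has length at least $3$.
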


\begin{lem}[\cite{Huang2021OuyangDong}] \label{lem:bi1planar}
Let $G$ be a bipartite $1$-planar graph with bipartition $(S,T)$ where $|S|=s$, $|T|=t$ and $2\le s\le t$. Then
$e(G)\le 2n(G)+4s-12,$ and this upper bound is tight.
\end{lem}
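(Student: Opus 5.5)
This is a cited edge bound, so I would reprove it by planarization plus a charging argument that counts crossings against the smaller side $S$. Fix a $1$-planar drawing of $G$ with the minimum number of crossings; in particular adjacent edges do not cross. Let $c$ be the number of crossings. Deleting one edge from each crossing pair leaves a plane bipartite graph $H$ on $n=s+t$ vertices. Euler's formula for bipartite plane graphs gives $e(H)\le 2n-4$, so $e(G)\le 2n-4+c$. It therefore suffices to show
\[
c\le 4s-8 .
\]
This reduction is routine. The whole difficulty is this crossing bound, which is where $s\le t$ enters.

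To bound $c$, I would build an auxiliary drawing on the vertex set $S$. Each crossing involves two edges $a_1b_1$ and $a_2b_2$ with $a_1,a_2\in S$ and $b_1,b_2\in T$. Since adjacent edges do not cross, $a_1\ne a_2$. Route a curve from $a_1$ along the half-edge of $a_1b_1$ to the crossing point, then along the half-edge of $a_2b_2$ to $a_2$, and push it slightly off the crossing. Every edge is crossed at most once, so each half-edge lies in exactly one such curve. Hence these $c$ curves can be drawn pairwise non-crossing, and they form a plane multigraph $M$ on $S$ with no loops and $c$ edges.

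The main obstacle is that $M$ may have parallel edges, so the bound $3s-6$ for simple plane graphs does not apply directly; and $4s-8$ is in any case larger than $3s-6$. My plan is to show that each face of $M$ has length at least $2$, and that certain small faces must contain vertices of $T$. Then I would count incidences between faces of $M$ and the $T$-endpoints $b_1,b_2$ of the crossings: each crossing contributes its two $T$-endpoints to the faces on either side of its curve.

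Concretely, I would try to prove that every $2$-face (lens) of $M$ formed by two parallel curves between $a_1$ and $a_2$ contains a $T$-vertex in its interior. If it did not, the four half-edges in the lens could be redrawn to remove a crossing, contradicting minimality. I would then try to show that no two lenses share the same $T$-vertex in a way that forces more than two curves per "homotopy class". Euler's formula on $M$, with the number of faces $f$ controlled by these incidences, should then give $c\le 2(2s-4)$. The count is consistent with a picture in which $M$ is a doubled quadrangulation: at most $2s-4$ homotopy classes, each of multiplicity at most $2$.

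If the lens argument turns out to be too weak, the fallback is a direct discharging. Give each crossing charge $1$, distribute it to the two $S$-endpoints, and use the cyclic rotation at each $a\in S$ to show that the total charge at $a$ is at most $4$ minus a local Euler-type correction whose sum over $S$ is $8$.

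Tightness is not part of what must be proved for the inequality. Only the bound $c\le 4s-8$ needs care; everything else follows from Lemma~\ref{lem:1planar}-style Euler counting.
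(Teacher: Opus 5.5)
The paper does not prove this lemma; it imports it from Huang, Ouyang and Dong. Your plan has a genuine gap: it reduces the lemma to the claim $c\le 4s-8$ for a crossing-minimal $1$-planar drawing, and that claim is false, so the plan cannot be completed.

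\textbf{Counterexample.} Take $G=K_{3,6}$, which is $1$-planar (Czap and Hud\'ak; compare Lemma~\ref{lem:K37}). Here $s=3$, so you would need $c\le 4$.
\begin{itemize}
\item In any drawing where adjacent edges do not cross, each of the $\binom{6}{3}=20$ copies of $K_{3,3}$ obtained by choosing three vertices of $T$ must contain a crossing.
\item A crossing of $a_ib_j$ with $a_kb_l$ (necessarily $j\ne l$) lies in exactly $4$ of these copies.
\item Hence $4c\ge 20$, so $c\ge 5$. In fact $\mathrm{cr}(K_{3,6})=6$.
\end{itemize}
Yet $e(K_{3,6})=18=2\cdot 9+4\cdot 3-12$ attains the bound. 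The slack is in the planar skeleton: $e(H)=18-c\le 13<14=2n-4$.

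So bounding $e(H)\le 2n-4$ and $c$ separately throws away exactly the information you need. No choice of drawing, or of which edge to delete at each crossing, repairs this. Your fallback discharging aims at the same false bound. The lens / ``doubled quadrangulation'' picture of $M$ is also wrong: lenses of $M$ can contain $T$-vertices, and here $M$ has $3$ vertices and at least $5>2(2s-4)$ edges. A smaller inaccuracy: the two crossing edges alternate around the crossing point, so the rotation there is $a_1,a_2,b_1,b_2$. Both $T$-half-edges therefore leave on the \emph{same} side of the curve you route through the crossing, not on either side.

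\textbf{What is missing.} You need to control $e(H)+c$ jointly rather than bounding $c$ alone. One way to see the right target is the planarization $G^\times$, with crossing points as degree-$4$ vertices. Euler's formula gives
\[
e(G)\le 2n-4+\tfrac12\sum_F\bigl(4-\ell(F)\bigr),
\]
summed over the faces $F$ of $G^\times$.
\begin{itemize}
\item A face of length $3$ must have the form $aXb$ with $ab\in E(G)$. It sits in one of the two angles at a crossing $X$ between an $S$-half-edge and a $T$-half-edge.
\item The $S$--$S$ and $T$--$T$ angles at a crossing lie in faces of length at least $4$.
\end{itemize}
So what must be charged to $S$ is the number of such triangular faces minus $\sum_{\ell(F)\ge 5}(\ell(F)-4)$, and you must show this is at most $8s-16$. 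Equivalently, since $e(G)=\sum_{b\in T}d(b)$, the lemma says $\sum_{b\in T}\bigl(d(b)-2\bigr)\le 6s-12$. That is the form in which the dependence on $s$ has to be proved; crossings alone cannot be charged to $S$ at rate $4$.
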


\begin{lem}[\cite{Huang2021OuyangDong}]\label{lem:K37}
Every $1$-planar graph is $K_{3,7}$-free.
\end{lem}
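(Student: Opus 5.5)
Since $1$-planarity is closed under taking subgraphs, it suffices to show that $K_{3,7}$ itself is not $1$-planar. The plan is to derive this directly from the edge bound for bipartite $1$-planar graphs in Lemma~\ref{lem:bi1planar}, by a counting argument.

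Suppose, for a contradiction, that some $1$-planar graph $G$ contains $K_{3,7}$ as a subgraph. Deleting from a fixed $1$-planar drawing of $G$ every vertex and edge outside this copy leaves a drawing of $K_{3,7}$ in which each edge is still crossed at most once. Hence $K_{3,7}$ is a bipartite $1$-planar graph.

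Take the bipartition $(S,T)$ of $K_{3,7}$ with $|S|=s=3$ and $|T|=t=7$. These satisfy the hypothesis $2\le s\le t$ of Lemma~\ref{lem:bi1planar}, and $n(K_{3,7})=10$. The lemma therefore gives
\[
e(K_{3,7})\le 2\cdot 10+4\cdot 3-12=20 .
\]
On the other hand, $e(K_{3,7})=3\cdot 7=21$, which is a contradiction. So no $1$-planar graph contains $K_{3,7}$.

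The substantive input is entirely Lemma~\ref{lem:bi1planar}, and the only point to check is that its hypotheses $2\le s\le t$ hold, which they do for $(s,t)=(3,7)$.

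If one wanted to avoid Lemma~\ref{lem:bi1planar}, the natural fallback would be to delete one edge from each crossing pair. This leaves a planar bipartite graph, which has at most $2\cdot 10-4=16$ edges. However, this route only gives about $32$ edges for $K_{3,7}$ and is far too weak. So the sharp bipartite bound of Lemma~\ref{lem:bi1planar} is genuinely needed, and invoking it is the whole plan.
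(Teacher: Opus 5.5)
Your proof is correct. Deleting everything outside a copy of $K_{3,7}$ from a $1$-planar drawing leaves a $1$-planar drawing of $K_{3,7}$. The pair $(s,t)=(3,7)$ satisfies $2\le s\le t$, and Lemma~\ref{lem:bi1planar} then gives $e(K_{3,7})\le 2\cdot 10+4\cdot 3-12=20<21$.

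The paper gives no argument of its own here; it imports the statement from the same source as Lemma~\ref{lem:bi1planar}. Your route differs in that it derives Lemma~\ref{lem:K37} as an immediate corollary of the bipartite edge bound, rather than treating it as a separate black box. The gain is economy: the paper then rests on one external input instead of two.

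The price is that your argument needs the sharp coefficient $4s$. As your fallback remark already anticipates, cruder bounds fail. The general bound $3n-8$ for bipartite $1$-planar graphs gives only $22$, and a coefficient $6s$ in place of $4s$ would give $26$; neither excludes $21$ edges.
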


Recall that, for any graph $F$, $\mathcal{C}(F)$ denotes the set of connected components of $F$. Given a graph $J$, we define the scalar-valued function
\[
f_J(t) = \mathbf{1}^\top \big(tI - A(J)\big)^{-1}\mathbf{1},\qquad t>\rho(J).
\]
When $t>\rho(J)$, we have $\rho\big(\tfrac{A(J)}{t}\big)<1$, so the Neumann series expansion is valid for the matrix inverse:
\[
\big(tI - A(J)\big)^{-1}
= \frac{1}{t}\left(I - \frac{A(J)}{t}\right)^{-1}
= \frac{1}{t}\sum_{k=0}^\infty \left(\frac{A(J)}{t}\right)^k
= \sum_{k=0}^\infty \frac{A(J)^k}{t^{k+1}}.
\]
Substituting this expansion into the definition of $f_J(t)$ yields
\[
f_J(t) = \mathbf{1}^\top \big(tI - A(J)\big)^{-1}\mathbf{1}
= \sum_{k=0}^\infty \frac{\mathbf{1}^\top A(J)^k \mathbf{1}}{t^{k+1}}.
\]
This auxiliary function is introduced to establish the following spectral comparison principle for the join graph $K_2\vee H$.

\begin{lem}\label{lem:f-principle}
Let $H$ be a graph, and let $\rho = \rho(K_2\vee H)$. Then
$\rho - 1 = 2\sum_{J\in\mathcal{C}(H)} f_J(\rho).$
Moreover, suppose $n(H')=n(H)$ and $\sum_{J'\in\mathcal{C}(H')} f_{J'}(\rho) > \sum_{J\in\mathcal{C}(H)} f_J(\rho)$. Then
$\rho(K_2\vee H') > \rho(K_2\vee H).$
\end{lem}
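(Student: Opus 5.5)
Write $G=K_2\vee H$, where $K_2$ has vertices $a,b$. The plan is to use the eigenvector equation and exploit the symmetry between $a$ and $b$.

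\textbf{Step 1: reduce to a scalar equation.} Let $\mathbf{x}$ be the Perron vector of $G$. Swapping $a$ and $b$ is an automorphism of $G$, and the Perron vector is unique up to scaling, so $x_a=x_b$. Normalize so that $x_a=x_b=1$. For every vertex $u\in V(H)$, the equation (\ref{Ax_u}) reads
\[
\rho x_u = x_a+x_b+\sum_{v\sim_H u}x_v .
\]
In vector form this is $(\rho I-A(H))\mathbf{x}_H = 2\cdot\mathbf{1}$, where $\mathbf{x}_H$ is the restriction of $\mathbf{x}$ to $V(H)$.

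\textbf{Step 2: invert componentwise.} Since $H$ is a proper subgraph of the connected graph $G$, we have $\rho>\rho(H)$. Hence $\rho>\rho(J)$ for every $J\in\mathcal{C}(H)$, so $\rho I-A(J)$ is invertible. Because $A(H)$ is block diagonal over the components, we get $\mathbf{x}_J=2(\rho I-A(J))^{-1}\mathbf{1}$ for each component $J$.

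\textbf{Step 3: use the equation at $a$.} The equation at $a$ is
\[
\rho x_a = x_b+\sum_{u\in V(H)}x_u ,
\]
which gives $\rho-1=\sum_J \mathbf{1}^\top\mathbf{x}_J = 2\sum_J f_J(\rho)$. This proves the identity.

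\textbf{Comparison part.} Define $g_H(t)=2\sum_{J\in\mathcal{C}(H)}f_J(t)-(t-1)$ for $t>\rho(H)$. By the Neumann series each $f_J$ is a positive, strictly decreasing function of $t$, so $g_H$ is strictly decreasing on $(\rho(H),\infty)$.

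\begin{itemize}
\item As $t\downarrow\rho(H)$, $g_H(t)\to+\infty$. Indeed, for the component $J$ attaining $\rho(H)$, the Perron eigenvector of $J$ has a nonzero projection onto $\mathbf{1}$, so $f_J(t)$ blows up.
\item As $t\to\infty$, $g_H(t)\to-\infty$.
\end{itemize}

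So $\rho(K_2\vee H)$ is the unique root of $g_H$ on $(\rho(H),\infty)$.

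Now apply the same reasoning to $H'$. Let $\rho=\rho(K_2\vee H)$ and $\rho'=\rho(K_2\vee H')$. Then $\rho'$ is the unique root of $g_{H'}$ on $(\rho(H'),\infty)$.

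\begin{itemize}
\item If $\rho\le\rho(H')$, then $\rho'>\rho(H')\ge\rho$ and we are done.
\item Otherwise $\rho>\rho(H')$, so $g_{H'}(\rho)$ is defined. The hypothesis gives $g_{H'}(\rho)>g_H(\rho)=0$. Since $g_{H'}$ is strictly decreasing with root $\rho'$, this forces $\rho'>\rho$.
\end{itemize}

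The hypothesis implicitly assumes $f_{J'}(\rho)$ is defined, so the second case is really the intended one. The main care needed is justifying the blow-up and monotonicity claims, which follow from the spectral decomposition $f_J(t)=\sum_i (\mathbf{1}^\top \mathbf{v}_i)^2/(t-\lambda_i)$.
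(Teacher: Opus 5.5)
Your proposal is correct and follows essentially the same route as the paper: the eigenvector equation on each component gives $\mathbf{x}_J=2(\rho I-A(J))^{-1}\mathbf{1}$, and the equation at a $K_2$ vertex gives the identity. The comparison then comes from strict monotonicity of $t-1-2\sum_J f_J(t)$ (your $g_H$ is just the negative of the paper's $\Phi_H$) together with the same two-case split. The differences are cosmetic: you get monotonicity from the Neumann series rather than the paper's derivative $f_J'(t)=-\mathbf{1}^\top(tI-A(J))^{-2}\mathbf{1}$, and you justify $x_a=x_b$ explicitly via the automorphism. Your limit analysis at $\rho(H)$ and at $\infty$ is harmless but unnecessary, since the identity already exhibits the root.
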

\begin{proof}
Let $z,w$ be the two vertices of $K_2$ in the join graph $K_2\vee H$, and let $\mathbf{x}$ stand for the Perron eigenvector of $K_2\vee H$ associated with $\rho$, normalized such that $x_z = x_w = 1$.

Fix a component $J\in\mathcal{C}(H)$. For each vertex $u\in V(J)$, the adjacency eigenvector equation gives
\[
\rho x_u = \big(A(J)\mathbf{x}_J\big)_u + x_z + x_w = \big(A(J)\mathbf{x}_J\big)_u + 2,
\]
where $\mathbf{x}_J$ is the restriction of $\mathbf{x}$ to the vertex set $V(J)$. Rewriting componentwise, we obtain the matrix equation
\[
\rho \mathbf{x}_J = A(J)\mathbf{x}_J + 2\mathbf{1}
\]
with $\mathbf{1}$ the all-ones vector on $V(J)$. Since $J$ is a proper subgraph of $K_2\vee H$, we have $\rho > \rho(J)$, which guarantees the invertibility of $\rho I - A(J)$. Solving for $\mathbf{x}_J$, we get
\[
\mathbf{x}_J = 2\big(\rho I - A(J)\big)^{-1}\mathbf{1}.
\]
Summing all entries of $\mathbf{x}_J$ yields
\[
\sum_{u\in V(J)} x_u
= \mathbf{1}^\top \mathbf{x}_J
= 2\mathbf{1}^\top \big(\rho I - A(J)\big)^{-1}\mathbf{1}
= 2f_J(\rho).
\]

We now evaluate the eigenvector equation at vertex $z$. The vertex $z$ is adjacent to $w$ and to every vertex of $H$, hence
\[
\rho x_z = x_w + \sum_{u\in V(H)} x_u= 1 + \sum_{J\in\mathcal{C}(H)}\sum_{u\in V(J)} x_u
= 1 + 2\sum_{J\in\mathcal{C}(H)} f_J(\rho).
\]
Rearranging terms immediately gives the identity
\[
\rho - 1 = 2\sum_{J\in\mathcal{C}(H)} f_J(\rho).
\]

We proceed to verify the comparison assertion. For any graph $F$, define the real-valued function
\[
\Phi_F(t) = t - 1 - 2\sum_{J\in\mathcal{C}(F)} f_J(t),
\qquad t > \max_{J\in\mathcal{C}(F)}\rho(J).
\]
By the same derivation as above, $\Phi_F\big(\rho(K_2\vee F)\big)=0$. For all $t>\rho(J)$, the symmetric matrix $tI-A(J)$ is positive definite and thus invertible. We next compute the derivative of $f_J(t)$ with respect to $t$.
Applying the standard matrix derivative rule
\[
\frac{d}{dt} M(t)^{-1} = -M(t)^{-1}M'(t)M(t)^{-1}
\]
with $M(t)=tI-A(J)$ and $M'(t)=I$, we deduce
\[
f_J'(t)
= \frac{d}{dt}\Big[\mathbf{1}^\top \big(tI-A(J)\big)^{-1}\mathbf{1}\Big]
= \mathbf{1}^\top \cdot \frac{d}{dt}\big(tI-A(J)\big)^{-1}\cdot \mathbf{1}
= -\mathbf{1}^\top \big(tI-A(J)\big)^{-2}\mathbf{1}.
\]
Since $(tI-A(J))^{-2}$ remains symmetric positive definite, the quadratic form $\mathbf{1}^\top (tI-A(J))^{-2}\mathbf{1}>0$, which implies $f_J'(t)<0$. Differentiating $\Phi_F(t)$, we obtain
\[
\Phi_F'(t) = 1 - 2\sum_{J\in\mathcal{C}(F)} f_J'(t) > 0.
\]
Therefore $\Phi_F(t)$ is strictly increasing on its domain, so its zero root $t=\rho(K_2\vee F)$ is uniquely determined.

Let $\rho = \rho(K_2\vee H)$. We split the discussion into two cases:
\begin{enumerate}
    \item If $\rho \le \max_{J'\in\mathcal{C}(H')}\rho(J')$, choose a component $J'\in\mathcal{C}(H')$ such that $\rho(J')\ge \rho$. As $J'$ is a proper subgraph of $K_2\vee H'$, the monotonicity of spectral radius yields $\rho(K_2\vee H') > \rho(J') \ge \rho$.
    \item If $\rho > \max_{J'\in\mathcal{C}(H')}\rho(J')$, the function $\Phi_{H'}(t)$ is well-defined at $t=\rho$. By hypothesis\linebreak  $\sum_{J'\in\mathcal{C}(H')} f_{J'}(\rho) > \sum_{J\in\mathcal{C}(H)} f_J(\rho)$, together with $\Phi_H(\rho)=0$, we get 
    \[
    \Phi_{H'}(\rho)
    = \rho - 1 - 2\sum_{J'\in\mathcal{C}(H')} f_{J'}(\rho)
    < \rho - 1 - 2\sum_{J\in\mathcal{C}(H)} f_J(\rho)
    = 0.
    \]
\end{enumerate}
Since $\Phi_{H'}$ is strictly increasing and its unique zero point is exactly $\rho(K_2\vee H')$, the inequality $\Phi_{H'}(\rho)<0$ forces
\[
\rho(K_2\vee H') > \rho = \rho(K_2\vee H).
\]
This completes the proof.
\end{proof}

\section{\normalsize Proof of Theorem~\ref{thm:XK5}}\setcounter{equation}{0}\label{s3}
To establish Theorem~\ref{thm:XK5}, we first formulate and prove three auxiliary technical lemmas, which serve as key intermediate steps for our subsequent spectral comparison arguments.

\begin{lem}\label{lem:3.1}
Let $H$ be a graph of order $n-2$ satisfying $\rho(H)\le 3$. Then
\[
\rho(K_2\vee H)\le 2+\sqrt{2n-3}.
\]
\end{lem}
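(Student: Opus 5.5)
We would apply Lemma~\ref{lem:f-principle} to $K_2\vee H$, combined with a crude bound on $f_J$ obtained from the spectral decomposition. Set $\rho=\rho(K_2\vee H)$. Since $H$ is a proper subgraph of $K_2\vee H$, we have $\rho>\rho(J)$ for every component $J$ of $H$. Lemma~\ref{lem:f-principle} therefore gives
\[
\rho-1=2\sum_{J\in\mathcal C(H)} f_J(\rho)=2\,\mathbf 1^\top(\rho I-A(H))^{-1}\mathbf 1 ,
\]
where the second equality holds because $A(H)$ is block diagonal over the components.

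Next we bound the quadratic form. Write $\mathbf 1=\sum_i c_i\mathbf v_i$ in an orthonormal eigenbasis of $A(H)$, with eigenvalues $\lambda_i\le \rho(H)\le 3$. Then
\[
\mathbf 1^\top(\rho I-A(H))^{-1}\mathbf 1=\sum_i \frac{c_i^2}{\rho-\lambda_i}\le \frac{\sum_i c_i^2}{\rho-\rho(H)}\le\frac{n-2}{\rho-3},
\]
provided $\rho>3$. This proviso holds because $K_2\vee H\supseteq K_{2,n-2}$, so $\rho\ge\sqrt{2(n-2)}>3$ for the relevant $n$; for very small $n$ the claimed bound can be checked directly or is trivial, since $\rho\le n-1\le 2+\sqrt{2n-3}$ when $n$ is small. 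This step is the only place needing care, and the monotonicity $\rho-\lambda_i\ge \rho-3>0$ settles it.

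Finally we combine the two displays to get $(\rho-1)(\rho-3)\le 2(n-2)$, that is, $\rho^2-4\rho+3\le 2n-4$, so $(\rho-2)^2\le 2n-3$. Taking square roots gives $\rho\le 2+\sqrt{2n-3}$.

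The main obstacle is only the mild one of ensuring $\rho>\rho(H)$ so that the resolvent bound applies. It is resolved by the subgraph monotonicity already used in the proof of Lemma~\ref{lem:f-principle}, which yields $\rho>\rho(H)$. With that in hand, every factor $\rho-\lambda_i$ is at least $\rho-3$ and positive whenever $\rho>3$, and the bound holds in all cases.
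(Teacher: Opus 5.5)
Your proof is correct, but it takes a different route from the paper's.

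The paper argues variationally. It writes the Perron vector of $K_2\vee H$ as $(t,t,\mathbf z)$ and expands the Rayleigh quotient. It bounds the cross term $4t\sum_v z_v$ by Cauchy--Schwarz and bounds $\mathbf z^\top A(H)\mathbf z$ by $3\|\mathbf z\|^2$. This reduces everything to the largest eigenvalue of the $2\times 2$ matrix with diagonal $(1,3)$ and off-diagonal entries $\sqrt{2(n-2)}$, which is $2+\sqrt{2n-3}$.

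You instead start from the exact secular identity of Lemma~\ref{lem:f-principle} and bound the resolvent form $\mathbf 1^\top(\rho I-A(H))^{-1}\mathbf 1$ spectrally. Your inequality $(\rho-1)(\rho-3)\le 2(n-2)$ says exactly that the characteristic polynomial of that same $2\times 2$ matrix is nonpositive at $\rho$. So both arguments compare $H$ with a $3$-regular graph, for which equality holds.

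Your version is shorter because it reuses a lemma the paper has already proved, and it gives the general bound $(\rho-1)(\rho-\rho(H))\le 2(n-2)$ at no extra cost. The paper's version is self-contained and needs no sign condition.

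The one delicate point in your argument is the proviso $\rho>3$, and your case split handles it correctly. For $n\le 6$ you have $n-1\le 2+\sqrt{2n-3}$, with equality at $n=6$. For $n\ge 7$ you have $\rho\ge\sqrt{2(n-2)}\ge\sqrt{10}>3$.

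You can avoid the split entirely. Keep $\rho-\rho(H)>0$ in the denominator to get $(\rho-1)(\rho-\rho(H))\le 2(n-2)$. Then use $\rho-1\ge 0$ and $\rho-3\le \rho-\rho(H)$ to conclude $(\rho-1)(\rho-3)\le 2(n-2)$, without any assumption on the sign of $\rho-3$.
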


\begin{proof}
Denote $G=K_2\vee H$ and write $A=A(G)$ for the adjacency matrix of $G$. The two vertices belonging to the $K_2$ summand are symmetric in $G$, so their corresponding entries in the Perron eigenvector of $A$ are identical. Let this common value be $t$, and let $\mathbf{z}\in\mathbb{R}^{n-2}$ denote the restriction of the Perron vector to the vertex set of $H$. Define $R^2=2t^2$ and $S^2=\|\mathbf{z}\|^2$.

By direct expansion of the quadratic form associated with the adjacency matrix, we get
\begin{equation}\label{eq:rayleigh-1}
(t,t,\mathbf{z}^\top)A(t,t,\mathbf{z}^\top)^\top
=2t^2 + 4t\sum_{v\in V(H)} z_v + \mathbf{z}^\top A(H)\mathbf{z}.
\end{equation}
Applying the Cauchy--Schwarz inequality to the summation term yields
\begin{equation}\label{eq:cs-bound}
4t\sum_{v\in V(H)} z_v
\le 4t\sqrt{n-2}\,\|\mathbf{z}\|
=2\sqrt{2(n-2)}\,RS.
\end{equation}
Invoking Lemma~\ref{lem:Rayleigh}, the Rayleigh quotient bound for the spectral radius gives, for all real vectors $\mathbf{z}$,
\begin{equation}\label{eq:rho-H-bound}
\mathbf{z}^\top A(H)\mathbf{z}\le \rho(H)\|\mathbf{z}\|^2\le 3S^2.
\end{equation}
Substituting inequalities \eqref{eq:cs-bound} and \eqref{eq:rho-H-bound} into \eqref{eq:rayleigh-1}, we obtain
\[
(t,t,\mathbf{z}^\top)A(t,t,\mathbf{z}^\top)^\top
\le R^2 + 2\sqrt{2(n-2)}\,RS + 3S^2.
\]
Since $(t,t,\mathbf{z}^\top)^\top$ is the Perron eigenvector of $A$, the spectral radius $\rho(G)$ equals the Rayleigh quotient evaluated at this vector:
\[
\rho(G)=\frac{(t,t,\mathbf{z}^\top)A(t,t,\mathbf{z}^\top)^\top}{(t,t,\mathbf{z}^\top)(t,t,\mathbf{z}^\top)^\top}.
\]
Note that $(t,t,\mathbf{z}^\top)(t,t,\mathbf{z}^\top)^\top = R^2+S^2$. Introduce the two-dimensional vector $\mathbf{x}=(R,S)^\top$ and the symmetric matrix
\[
M=\begin{pmatrix}
1 & \sqrt{2(n-2)}\\
\sqrt{2(n-2)} & 3
\end{pmatrix}.
\]
We then bound the Rayleigh quotient by the maximal eigenvalue of $M$:
\[
\rho(G)\le \frac{\mathbf{x}^\top M\mathbf{x}}{\mathbf{x}^\top \mathbf{x}}
\le \lambda_{\max}(M).
\]
Solving the characteristic equation of $M$ to compute its largest eigenvalue:
\[
\lambda_{\max}(M)
=\frac{4+\sqrt{4+8(n-2)}}{2}
=2+\sqrt{2n-3}.
\]
This completes the proof of the lemma.
\end{proof}

\begin{lem}\label{lem:3.2}
Suppose $n$ is odd, and let $i$ be an odd integer with $3\le i\le n-6$. Let the graph $H$ be constructed from $P_{n-2}^{2+}$ $($with vertex set  $\{u_1,u_2,\dots,u_{n-2}\}$ and edge set $\{e_j=u_ju_{j+1}: 1\le j\le n-3\}\cup \{e'_j=u_ju_{j+2}: 1\le j\le n-4\} \cup \{u_1u_{n-2}\})$ by deleting all the edges in $E^-=\{e_2,e_4,\dots,e_{i-1},\,e_i',\,e_{i+2},e_{i+4},\dots,e_{n-4}\}.$ 
Then $\rho(H)\le 3$.
\end{lem}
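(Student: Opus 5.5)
The plan is to use the upper bound in Lemma~\ref{lem:Rayleigh}: I will construct a positive test vector $\mathbf{y}$ on $V(H)$ with $(A(H)\mathbf{y})_v\le 3y_v$ for every vertex $v$. The first step is to record the local structure of $H$, which is essentially $3$-regular.

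\textbf{Degrees in $H$.} In $P_{n-2}^{2+}$ every vertex has degree $4$, except $u_1,u_2,u_{n-3},u_{n-2}$, which have degree $3$. The deleted path edges $e_2,e_4,\dots,e_{i-1}$ and $e_{i+2},\dots,e_{n-4}$ form a matching. Since $i$ and $n$ are odd, it saturates exactly $\{u_2,\dots,u_i\}\cup\{u_{i+2},\dots,u_{n-3}\}$. The extra deleted edge $e_i'=u_iu_{i+2}$ joins two vertices that are already saturated. Checking the boundary vertices $u_1$ and $u_{n-2}$ separately, every vertex of $H$ has degree $3$ except three: $d_H(u_{i+1})=4$ and $d_H(u_i)=d_H(u_{i+2})=2$. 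The condition $3\le i\le n-6$ keeps this defect away from the ends of the path and from the edge $u_1u_{n-2}$. I will also write out the neighbourhoods near the defect. For example, $N_H(u_i)=\{u_{i-2},u_{i+1}\}$, $N_H(u_{i-1})=\{u_{i-3},u_{i-2},u_{i+1}\}$ and $N_H(u_{i-2})=\{u_{i-4},u_{i-1},u_i\}$, with symmetric statements on the right of the defect.

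\textbf{Choice of test vector.} Away from the defect I take $y_v=1$, which gives $(A\mathbf{y})_v=3=3y_v$. Near the defect I put $y_{u_{i+1}}=a$, $y_{u_i}=y_{u_{i+2}}=b$ and $y_{u_{i-1}}=y_{u_{i+3}}=c$, and I let the coordinates relax back to $1$ further out. The local inequalities are
\[
2b+2c\le 3a,\qquad y_{u_{i-2}}+a\le 3b,\qquad y_{u_{i-3}}+y_{u_{i-2}}+a\le 3c,
\]
together with their mirror images and the inequalities at the next layers. A first attempt with all outer coordinates equal to $1$ gives $a=6/5$, $b=11/15$, $c=16/15$. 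These values satisfy the conditions at $u_{i+1}$, $u_i$ and $u_{i-2}$, the last with slack ($2.8\le 3$). They fail only at $u_{i-3}$, which sees the enlarged value $c$.

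\textbf{Main obstacle.} The main difficulty is that the surplus coming from the degree-$4$ vertex propagates outward along the even-indexed chain $u_{i-1},u_{i-3},\dots$. I will handle this by letting the weights on both sides follow a slowly adjusted profile: coordinates slightly above $1$ on the vertices $u_{i-1-2k}$, and slightly below $1$ on the partners $u_{i-2k}$. The idea is that the slack created at the degree-$2$ vertices $u_i$ and $u_{i+2}$, and passed on to their neighbours, absorbs the excess. Concretely, I will try a geometric tail $y=1\pm\varepsilon q^k$ on the $k$-th layer and solve the resulting linear recursion; this is a transfer-matrix computation for the periodic local pattern of $P_{n-2}^{2+}$ minus a perfect matching. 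I will choose $q$ so that every layer inequality holds. Because the chains are long, of length $\Theta(n)$, the tail has room to decay before it reaches $u_1$ or $u_{n-2}$. The boundary vertices need a separate check, which should be routine since their degrees are $3$.

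\textbf{Fallback.} If no single such profile closes all the inequalities, I will instead bound $\rho(H)^2$ through weighted row sums of $A(H)^2$, again via Lemma~\ref{lem:Rayleigh}. The two degree-$2$ vertices then enter with weight, and this compensates for the degree-$4$ vertex more directly.
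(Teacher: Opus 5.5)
\textbf{There is a genuine gap: you never actually produce a test vector, and the reasons you give for why one will close are false.} You do not exhibit any $\mathbf{y}>\mathbf{0}$ with $A(H)\mathbf{y}\le 3\mathbf{y}$. Your one explicit attempt fails at $u_{i-3}$, as you note. The rest is deferred to a transfer-matrix computation that is not carried out, plus an unspecified fallback.

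\emph{The degree count is wrong.} The vertices $u_2$ and $u_{n-3}$ also have degree $2$ in $H$: they start with degree $3$ and lose $e_2$ and $e_{n-4}$ respectively.

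\emph{The defect is not away from the ends, and the chains need not have length $\Theta(n)$.} In fact $H$ is two ladders glued at the corner $u_{i+1}$, plus the single edge $u_1u_{n-2}$. The left ladder has rungs $u_1u_2,\dots,u_iu_{i+1}$, the right ladder has rungs $u_{i+1}u_{i+2},\dots,u_{n-3}u_{n-2}$, and in both the rails are the edges $u_ju_{j+2}$. Put $K=(i-1)/2$ and $K'=(n-4-i)/2$. For $i=3$ (resp.\ $i=n-6$) the left (resp.\ right) ladder is a single $4$-cycle.

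\emph{Your idea does work in the bulk.} Set $y_{u_{i+1-2k}}=1+\varepsilon q^k$ and $y_{u_{i-2k}}=1-\varepsilon q^k$ on the left, and $y_{u_{i+1+2k}}=1+\varepsilon q^k$, $y_{u_{i+2+2k}}=1-\varepsilon q^k$ on the right. Here $q=2-\sqrt3$ is the decaying root of $q+q^{-1}=4$. This gives equality at every non-corner ladder vertex. The rows of $u_i,u_{i+1},u_{i+2}$ then reduce to $1/(1+2\sqrt3)\le\varepsilon\le 2-\sqrt3$.

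\emph{The ends then fail.} The row of $u_1$ reads $\varepsilon\,(q^{K'}-q^{K+1})\ge 0$, which fails whenever $K'\ge K+2$; symmetrically, the row of $u_{n-2}$ fails when $K\ge K'+2$. For $i=3$ the deficit at $u_1$ is about $(7-4\sqrt3)\varepsilon$, which is not small. So the boundary check is not routine. It needs an additional correction that uses the slack at $u_2$ and $u_{n-3}$, with separate handling when $\min\{K,K'\}$ is small, and none of this is in your proposal.

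\textbf{Why the test-vector route is hard, and what the paper does instead.} A Collatz--Wielandt certificate must hold row by row on all of $H$. Since $\rho(H)\ge 2e(H)/(n-2)=3-\frac{3}{n-2}$, there is almost no room, so the ends inevitably enter the argument.

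The paper avoids this by proving $3I-A\succeq 0$ with a purely local sum of squares. It writes
$\mathbf{x}^\top(3I-A)\mathbf{x}=\sum_{uv\in E(H)}(x_u-x_v)^2+\sum_v(3-d_H(v))x_v^2$.
The only negative coefficient is $-1$, at $u_{i+1}$. Combine it with the edge terms of $u_iu_{i+1}$ and $u_{i+1}u_{i+2}$, and with the terms $x_{u_i}^2+x_{u_{i+2}}^2$ coming from the two degree-$2$ vertices. The result is exactly $2(x_{u_i}-\tfrac12x_{u_{i+1}})^2+2(x_{u_{i+2}}-\tfrac12x_{u_{i+1}})^2\ge 0$. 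Every remaining term is nonnegative, so neither the value of $i$ nor the ends play any role.

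If you keep the test-vector approach, you must supply the boundary correction described above. Switching to this quadratic-form certificate closes the proof in a few lines.
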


\begin{proof}
Let $A=A(H)$ stand for the adjacency matrix of $H$. We first characterize the vertex degree sequence of $H$ by examining the edge deletion rules:
\begin{itemize}
    \item The edges $e_i,e_{i+1},e_{i-1}',e_{i+1}'$ are retained, so $N_H(u_{i+1})=\{u_{i-1},u_i,u_{i+2},u_{i+3}\}$, which implies $d_H(u_{i+1})=4$.
    \item The edges $e_{i-1},e_i'$ are removed, while $e_{i-2}',e_i$ remain present; hence $N_H(u_i)=\{u_{i-2},u_{i+1}\}$ and $d_H(u_i)=2$.
    \item Similarly, $e_i',e_{i+2}$ are deleted but $e_{i+1},e_{i+2}'$ are preserved, giving $N_H(u_{i+2})=\{u_{i+1},u_{i+4}\}$ and $d_H(u_{i+2})=2$.
    \item In $P_{n-2}^{2+}$, every vertex has degree 4 except $u_1, u_2, u_{n-3}, u_{n-2}$, which have degree 3. After deleting the edge set $E^-$, each vertex in $\{u_2, \ldots, u_{i-1}\} \cup \{u_{i+3}, \ldots, u_{n-3}\}$ loses exactly one degree. Hence, $d_H(v) \leq 3$ for every $v \neq u_{i+1}$.
\end{itemize}

Our goal is to verify that the matrix $3I-A$ is positive semidefinite. Take an arbitrary real vector $\mathbf{x}=(x_v)_{v\in V(H)}\in\mathbb{R}^{V(H)}$. Expanding the quadratic form yields
\[
\mathbf{x}^\top(3I-A)\mathbf{x}
=3\sum_{v\in V(H)} x_v^2 - 2\sum_{uv\in E(H)} x_u x_v.
\]
We use the standard graph identity for squared edge differences:
\[
\sum_{uv\in E(H)} (x_u-x_v)^2
=\sum_{v\in V(H)} d_H(v)x_v^2 - 2\sum_{uv\in E(H)} x_u x_v.
\]
Substituting this identity into the quadratic form expression, we rearrange terms to obtain
\[
\mathbf{x}^\top(3I-A)\mathbf{x}
=\sum_{uv\in E(H)} (x_u-x_v)^2
+\sum_{v\in V(H)} \big(3-d_H(v)\big)x_v^2.
\]
All summands in the second term are non-negative, with the sole exception of the term indexed by $u_{i+1}$, for which $3-d_H(u_{i+1})=-1$. Recalling $d_H(u_i)=d_H(u_{i+2})=2$ and the edges $u_i u_{i+1},u_{i+1}u_{i+2}\in E(H)$, we lower bound the quadratic form:
\begin{align*}
\mathbf{x}^\top(3I-A)\mathbf{x}
&\ge (x_i-x_{i+1})^2 + (x_{i+1}-x_{i+2})^2 + x_i^2 + x_{i+2}^2 - x_{i+1}^2 \\
&= 2\left(x_i-\frac{x_{i+1}}{2}\right)^2 + 2\left(x_{i+2}-\frac{x_{i+1}}{2}\right)^2
\ge 0.
\end{align*}
Since the inequality holds for every real vector $\mathbf{x}$, $3I-A\geq {\bf 0}$ (positive semidefinite). Consequently, all eigenvalues of $A$ are bounded above by $3$. As $A$ is symmetric and non-negative, its spectral radius coincides with its largest eigenvalue, so $\rho(H)\le 3$.
\end{proof}

\begin{lem}\label{lem:3.3}
Let $n\ge 7$, and define $G=2K_1\vee C_{n-2}^{2-}$, where $C_{n-2}^{2-}$ is obtained from the square cycle $C_{n-2}^2$ by deleting a single edge $u_{n-2}u_2$. Then $\rho(G)>2+\sqrt{2n-3}.$
\end{lem}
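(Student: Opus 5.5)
Let $G=2K_1\vee C_{n-2}^{2-}$, with $z,w$ the two vertices of $2K_1$ and $H=C_{n-2}^{2-}$ on $u_1,\dots,u_{n-2}$. We exploit the same reduction as in Lemma~\ref{lem:3.1}, but now as a lower bound. The vertices $z,w$ are nonadjacent, so the eigenvector analogue of Lemma~\ref{lem:f-principle} reads $\rho=2\sum_{u}x_u/\,$(with $x_z=x_w=1$), and $\mathbf{x}_H=2(\rho I-A(H))^{-1}\mathbf{1}$. Hence $\rho(G)$ is the largest root of
\[
t=2\,\mathbf{1}^\top(tI-A(H))^{-1}\mathbf{1}=2f_H(t).
\]
Since $t-2f_H(t)$ is strictly increasing for $t>\rho(H)$ (by the derivative computation in Lemma~\ref{lem:f-principle}), it suffices to show that
\[
2f_H(t_0)>t_0,\qquad t_0:=2+\sqrt{2n-3}.
\]
We may assume $t_0>\rho(H)$, since $\rho(H)\le 4$; otherwise the claim is immediate.

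To lower bound $f_H(t_0)$, use the Neumann series $f_H(t)=\sum_k \mathbf{1}^\top A(H)^k\mathbf{1}/t^{k+1}$ and keep only walk counts. Every vertex of $C_{n-2}^{2-}$ has degree $4$ except $u_{n-2}$ and $u_2$, which have degree $3$. Walk counts therefore satisfy
\[
\mathbf{1}^\top A^k\mathbf{1}\ge (n-2)4^k - c_k,
\]
where $c_k$ is bounded by roughly $2k\cdot 4^{k-1}$, the defect propagating from the two degree-3 vertices. A cleaner route is a Rayleigh-type bound: $\mathbf{1}^\top(tI-A)^{-1}\mathbf{1}\ge \frac{(\mathbf{1}^\top\mathbf{1})^2}{\mathbf{1}^\top(tI-A)\mathbf{1}}$ by Cauchy--Schwarz for the positive definite matrix $tI-A$. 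Here $\mathbf{1}^\top A\mathbf{1}=2e(H)=2(2(n-2)-1)=4n-10$, so
\[
f_H(t)\ge \frac{(n-2)^2}{t(n-2)-(4n-10)} = \frac{n-2}{t-4+\frac{2}{n-2}}.
\]

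It then remains to check
\[
\frac{2(n-2)}{t_0-4+\frac{2}{n-2}}>t_0,
\]
i.e. $2(n-2)>t_0^2-4t_0+\frac{2t_0}{n-2}$. With $t_0=2+s$ and $s=\sqrt{2n-3}$, we get $t_0^2-4t_0=s^2-4=2n-7$. The inequality becomes $2n-4>2n-7+\frac{2(2+s)}{n-2}$, i.e. $3(n-2)>2(2+\sqrt{2n-3})$, which holds for $n\ge 7$ since $15>2(2+\sqrt{11})\approx 10.6$ and the left side grows faster.

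The main obstacle is making sure the reduction is stated correctly, with the factor $2$ and no $-1$ because $z\not\sim w$, unlike Lemma~\ref{lem:f-principle}, together with the strict monotonicity that turns $2f_H(t_0)>t_0$ into $\rho(G)>t_0$. The Cauchy--Schwarz step is strict because $\mathbf{1}$ is not an eigenvector of $A(H)$, as $H$ is not regular. Alternatively, one could avoid all of this by applying Lemma~\ref{lem:Rayleigh} directly to $G$ with the test vector $(a,a,1,\dots,1)$, optimizing over $a$. That gives $\rho(G)\ge\lambda_{\max}\begin{pmatrix}0&\sqrt{2(n-2)}\\ \sqrt{2(n-2)}&\bar d\end{pmatrix}$ with $\bar d=\frac{4n-10}{n-2}$, and the same computation then shows this exceeds $2+\sqrt{2n-3}$.
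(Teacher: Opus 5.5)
Your proof is correct. Its main route differs in technique from the paper's, and your closing ``alternative'' is exactly the paper's proof.

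\textbf{The paper's route.} The paper restricts the Rayleigh quotient of $A(G)$ to $\operatorname{span}\{\mathbf 1_X,\mathbf 1_Y\}$. This gives a $2\times 2$ matrix with entries $0$, $\sqrt{2(n-2)}$ and $\bar d=\frac{4n-10}{n-2}$. The paper then checks that its largest eigenvalue exceeds $2+\sqrt{2n-3}$, which reduces to $3(n-2)-4>2\sqrt{2n-3}$.

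\textbf{Your route.} You adapt Lemma~\ref{lem:f-principle} to the nonadjacent pair $z,w$, so that $\rho=2f_H(\rho)$ with no $-1$ term. You use that $t-2f_H(t)$ is increasing on $(\rho(H),\infty)$. You then lower-bound $f_H(t_0)$ by $\mathbf 1^\top M^{-1}\mathbf 1\ge (\mathbf 1^\top\mathbf 1)^2/\mathbf 1^\top M\mathbf 1$.

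\textbf{Comparison.} The two bounds have exactly the same strength. Your bound reads $2f_H(t)\ge 2(n-2)/(t-\bar d)$. The largest root of $t=2(n-2)/(t-\bar d)$ is precisely $\lambda_{\max}$ of the paper's $2\times 2$ matrix, since its characteristic equation is $\lambda(\lambda-\bar d)=2(n-2)$. That is why you arrive at the same condition $3(n-2)>2(2+\sqrt{2n-3})$. The paper's version is more self-contained: it needs only Lemma~\ref{lem:Rayleigh}, with no invertibility or monotonicity bookkeeping. Yours is more modular: any sharper lower bound on $f_H(t_0)$ (for example, from the Neumann series using $\mathbf 1^\top A^2\mathbf 1$) plugs in directly.

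\textbf{Minor points.}
\begin{itemize}
\item Your first displayed identity is garbled. It should read $\rho x_z=\sum_{u\in V(H)}x_u=2f_H(\rho)$.
\item You do not need strictness in Cauchy--Schwarz, because the final numerical inequality is already strict.
\item For $n\ge 7$ you have $t_0\ge 2+\sqrt{11}>4\ge\rho(H)$, so your ``otherwise'' case never occurs.
\end{itemize}
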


\begin{proof}
Partition the vertex set of $G$ as $X=V(2K_1)=\{z,w\}$ and $Y=V(C_{n-2}^{2-})=\{u_1,u_2,\dots,u_{n-2}\}$, so $|X|=2$ and $|Y|=n-2$. The graph $C_{n-2}^2$ is $4$-regular, hence deleting one edge yields $e(C_{n-2}^{2-})=2(n-2)-1=2n-5$. Let $A=A(G)$ denote the adjacency matrix of $G$.

We construct two mutually orthogonal test vectors on $V(G)$:
\[
\mathbf{v}_1=\frac{1}{\sqrt{2}}\begin{pmatrix}\mathbf{1}_X\\ \mathbf{0}_Y\end{pmatrix}
=\frac{1}{\sqrt{2}}(1,1,0,\dots,0)^\top,\qquad
\mathbf{v}_2=\frac{1}{\sqrt{n-2}}\begin{pmatrix}\mathbf{0}_X\\ \mathbf{1}_Y\end{pmatrix}
=\frac{1}{\sqrt{n-2}}(0,0,1,\dots,1)^\top,
\]
where $\mathbf{1}_X,\mathbf{1}_Y$ stand for all-ones vectors, and $\mathbf{0}_X,\mathbf{0}_Y$ denote zero vectors on the respective vertex subsets. Let $U=\operatorname{span}\{\mathbf{v}_1,\mathbf{v}_2\}$. For any linear combination $\mathbf{x}=\alpha\mathbf{v}_1+\beta\mathbf{v}_2\in U$, we have
\[
\mathbf{x}^\top A\mathbf{x}=(\alpha,\beta)B(\alpha,\beta)^\top,\qquad
\mathbf{x}^\top \mathbf{x}=\alpha^2+\beta^2
\]
with matrix $B=(b_{ij})_{2\times 2}$, where
\begin{align*}
b_{11}&=\mathbf{v}_1^\top A\mathbf{v}_1=0,\\
b_{12}&=b_{21}=\mathbf{v}_1^\top A\mathbf{v}_2=\sqrt{2(n-2)},\\
b_{22}&=\mathbf{v}_2^\top A\mathbf{v}_2
=\frac{1}{n-2}\mathbf{1}_Y^\top A(C_{n-2}^{2-})\mathbf{1}_Y
=\frac{2e(C_{n-2}^{2-})}{n-2}
=\frac{4n-10}{n-2}=4-\frac{2}{n-2}.
\end{align*}
Thus
\[
B=\begin{pmatrix}
0 & \sqrt{2(n-2)}\\
\sqrt{2(n-2)} & 4-\dfrac{2}{n-2}
\end{pmatrix}.
\]
By the Rayleigh quotient characterization of spectral radius (Lemma \ref{lem:Rayleigh}),
\[
\rho(G)\ge \max_{\mathbf{x}\ne\mathbf{0}}\frac{\mathbf{x}^\top A\mathbf{x}}{\mathbf{x}^\top \mathbf{x}}
\ge \max_{(\alpha,\beta)\ne\mathbf{0}}\frac{(\alpha,\beta)B(\alpha,\beta)^\top}{\alpha^2+\beta^2}
=\lambda_{\max}(B).
\]
Computing the maximal eigenvalue of $B$:
\[
\lambda_{\max}(B)
=2-\frac{1}{n-2}+\sqrt{\bigl(2-\frac{1}{n-2}\bigr)^2+2(n-2)}.
\]
It remains to verify the strict inequality
\[
2-\frac{1}{n-2}+\sqrt{\bigl(2-\frac{1}{n-2}\bigr)^2+2(n-2)}
>2+\sqrt{2n-3},
\]
which reduces to
\[
3(n-2)-4>2\sqrt{2n-3}.
\]
Consider the quadratic polynomial $q(n)=\big(3(n-2)-4\big)^2-4(2n-3)=9n^2-68n+112$. For all integers $n\ge7$, $q(n)>0$, which confirms the desired inequality. Therefore $\rho(G)\geq\lambda_{\max}(B)>2+\sqrt{2n-3}$, finishing the proof.
\end{proof}

We now proceed to the proof of Theorem \ref{thm:XK5}. 

\begin{proof}[\bf Proof of Theorem \ref{thm:XK5}] From Theorem \ref{thm:K5}, the spectral maximizers are confined to an explicit family of join graphs, split into parity cases for $n$:
\begin{itemize}
    \item If $n$ is even: the candidate extremal graphs are $2K_1\vee C_{n-2}^2$ and $K_2\vee Q$;
    \item If $n$ is odd: the candidate set is $\{2K_1\vee C_{n-2}^{2-}\}\cup\big\{K_2\vee H\mid H\in\mathcal{P}_{n-2}^2\big\}$.
\end{itemize}

\paragraph{Case 1: $n$ is even.}
By the structural definition of $Q$, every vertex of $Q$ has degree at most $3$, so $\rho(Q)\le \Delta(Q)\le 3$. Applying Lemma \ref{lem:3.1}, we immediately obtain
\[
\rho(K_2\vee Q)\le 2+\sqrt{2n-3}.
\]
Lemma \ref{lem:3.3} guarantees $2+\sqrt{2n-3}<\rho(2K_1\vee C_{n-2}^{2-})$. Combining these inequalities:
\[
\rho(K_2\vee Q)\le 2+\sqrt{2n-3}<\rho(2K_1\vee C_{n-2}^{2-})<\rho(2K_1\vee C_{n-2}^2).
\]
Hence the unique graph attaining the maximum spectral radius is $2K_1\vee C_{n-2}^2$.

\paragraph{Case 2: $n$ is odd.}
For any $H\in \mathcal{P}_{n-2}^{2}$, deleting $E^{-}\setminus \{e_i'\}$ destroys all triangles except $T_i \coloneqq u_i u_{i+1} u_{i+2} u_i$ for odd $i$. We consider three subcases based on the location of $T_i$ and which of its edges is removed.
\begin{enumerate}
    \item If $i=1$ or $i=n-4$, $T_i$ can be destroyed by deleting any of its three edges. The remaining deleted edges are fixed to $\{e_3,e_5,\dots,e_{n-4}\}$ or $\{e_2,e_4,\dots,e_{n-5}\}$, respectively. Under all such choices, $\Delta(H)\le 3$, so $\rho(H)\le \Delta(H)\le 3$ by the spectral bound.

    \item If $3\le i \le n-6$ and $T_i$ is destroyed by deleting $e_i$ or $e_{i+1}$, with the rest of the edge set deleted as in $E^{-}\setminus\{e_i'\}$, we have $\Delta(H)\le 3$, and consequently $\rho(H)\le 3$.

    \item If $3\le i \le n-6$ and $T_i$ is destroyed by deleting $e_i'$, while the remaining edges are deleted as in $E^{-}\setminus\{e_i'\}$, the resulting graph coincides exactly with the graph $H$ given in Lemma~\ref{lem:3.2}; hence $\rho(H)\le 3$.
\end{enumerate}
In all of the above subcases, we obtain $\rho(H)\le 3$ for every $H\in \mathcal{P}_{n-2}^{2}$.

Invoking Lemma \ref{lem:3.1},
\[
\text{$\rho(K_2\vee H)\le 2+\sqrt{2n-3},$ for all $H\in\mathcal{P}_{n-2}^2.$}
\]
Again by Lemma \ref{lem:3.3},
\[
\rho(K_2\vee H)\le 2+\sqrt{2n-3}<\rho(2K_1\vee C_{n-2}^{2-}).
\]
We conclude that $2K_1\vee C_{n-2}^{2-}$ is the unique extremal graph for odd $n$.

Combining both parity cases, the proof of Theorem \ref{thm:XK5} is fully completed.
\end{proof}

\section{\normalsize Proof of Theorem~\ref{thm:F}}\setcounter{equation}{0}\label{s4}
For integer $n \geq 10^{19}$, let $G\in \operatorname{SPEX}_{\mathcal{P}_1}(n,F)$, and denote $\rho(G)$ as its spectral radius. Within this section, we sequentially characterize the structural properties of this extremal graph $G$ via a series of auxiliary claims, and finally complete the rigorous proof of Theorem \ref{thm:F}.

We first establish two-sided bounds for $\rho(G)$.
\begin{claim}\label{c1}
$\sqrt{2n} < \rho(G) < \sqrt{2.1n}.$
\end{claim}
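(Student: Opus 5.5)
For the lower bound I will exhibit an $F$-free $1$-planar competitor. Since $F\not\subseteq K_2\vee I_{n-2}$, the graph $K_2\vee I_{n-2}$ is $F$-free. It is also $1$-planar, indeed planar: draw the $n-2$ independent vertices on a line, with $z$ above, $w$ below, and the edge $zw$ routed around the outside. By extremality of $G$ we then have $\rho(G)\ge \rho(K_2\vee I_{n-2})$.

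This spectral radius is computed from the quotient matrix of the equitable partition $\{z,w\}\cup I_{n-2}$, namely $\begin{pmatrix}1 & n-2\\ 2 & 0\end{pmatrix}$. Its largest eigenvalue is
\[
\rho(K_2\vee I_{n-2})=\frac{1+\sqrt{8n-15}}{2}.
\]
The inequality $\frac{1+\sqrt{8n-15}}{2}>\sqrt{2n}$ is equivalent to $\sqrt{8n-15}>2\sqrt{2n}-1$. Squaring gives $8n-15>8n-4\sqrt{2n}+1$, i.e. $4\sqrt{2n}>16$, which holds for $n\ge 9$. Hence $\rho(G)>\sqrt{2n}$.

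For the upper bound I will use only that $G$ is $1$-planar, in the spirit of the Hong-type bounds. By Lemma~\ref{lem:1planar}, $e(G)\le 4n-8$, so the crude estimate $\rho\le\sqrt{2e(G)}\approx\sqrt{8n}$ is too weak. Instead I will use the walk-count identity
\[
\rho^2 x_u=\sum_{v\sim u}\sum_{y\sim v}x_y=d(u)x_u+\sum_{y\ne u}|N(u)\cap N(y)|\,x_y ,
\]
applied at a vertex $u$ with $x_u=1$. The sum $\sum_{y}|N(u)\cap N(y)|$ counts edges between $N(u)$ and $V\setminus\{u\}$, and is therefore at most $2e(N(u))+e(N(u),V\setminus N(u)\setminus\{u\})$.

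The key step is to separate vertices of large and small weight. The first ingredient is a bound on the total weight: summing $\rho x_v=\sum_{y\sim v}x_y$ over all $v$ gives $\rho\sum_v x_v=\sum_v d(v)x_v$, and together with $e(G)\le 4n-8$ this yields $\sum_v x_v\le 8n/\rho$. The second ingredient uses Lemma~\ref{lem:bi1planar} and Lemma~\ref{lem:K37} to control common neighbourhoods. Let $L=\{v:x_v\ge \varepsilon\}$. Bounding each $|N(u)\cap N(y)|$ appropriately, and applying Lemma~\ref{lem:bi1planar} to the bipartite graph $G[N(u),V\setminus N(u)]$, I expect
\[
\rho^2\le d(u)+\sum_{y\in L}|N(u)\cap N(y)|+\varepsilon\, e\bigl(N(u),V\bigr)\le n+|L|\cdot O(1)+\ldots .
\]
The terms $d(u)\le n$ and $\sum_{y\in L}|N(u)\cap N(y)|$ over the few heavy vertices give roughly $n$ plus another $n$ from a second heavy vertex, which is the $2n$ scale. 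To finish I will bound $|L|\le 8n/(\varepsilon\rho)=O(\sqrt n)$, combine this with $K_{3,7}$-freeness, and bound the light contributions by $\varepsilon\cdot O(n)$; a small $\varepsilon$ then gives $\rho^2<2.1n$.

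The main obstacle is this upper bound: the constant must be pinned down to $2.1$ rather than a crude multiple. The first thing I would try is the standard two-step argument from the planar spectral papers (Tait–Tobin style), adapted to $1$-planar graphs via Lemma~\ref{lem:bi1planar}: a heavy/light split with $\varepsilon\approx 10^{-3}$, followed by the large-$n$ assumption $n\ge 10^{19}$ to absorb all error terms.
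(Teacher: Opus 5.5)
Your lower bound is exactly the paper's argument. For the upper bound you missed the route the paper takes. Since $G$ is $1$-planar and $n\ge 10^{19}>6.23\times 10^{18}$, Theorem~\ref{thm:Wang} gives $\rho(G)<\rho(K_2\vee P_{n-2}^{2+})$; the inequality is strict because $F\subseteq K_2\vee P_{n-2}^{2+}$. The paper then bounds $\rho(K_2\vee P_{n-2}^{2+})$ with the right-hand inequality of Lemma~\ref{lem:Rayleigh} and the test vector $(a,a,b,\dots,b)^\top$. Since $\Delta(P_{n-2}^{2+})\le 4$, it suffices that $a+(n-2)b\le\rho_0 a$ and $2a+4b\le\rho_0 b$. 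Both hold with $a/b=(-3+\sqrt{8n-7})/4$ and $\rho_0=(5+\sqrt{8n-7})/2<\sqrt{2.1n}$, so no heavy/light analysis is needed.

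Your own route has a genuine gap at its decisive step, which you leave as ``I expect''. You must show that $d(u)+\sum_{y\in L\setminus\{u\}}|N(u)\cap N(y)|\,x_y\le(2+o(1))n$. The heavy sum is not $|L|\cdot O(1)$: in the extremal structure a second vertex shares almost all of $N(u)$, so the sum is of order $n$. Lemma~\ref{lem:bi1planar} applied to $G[N(u),V\setminus N(u)]$ cannot control it, since it ignores edges inside $N(u)$ and does not separate heavy from light vertices. Bounding the heavy--light edges naively by $2n$ only yields $\rho^2\lesssim 3n$.

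The missing idea is to apply Lemma~\ref{lem:bi1planar} to $G[L,S]$ with $S=V\setminus L$, and to note that this bound already accounts for $u$'s own edges: $e(L\setminus\{u\},S)\le 2n+4|L|-12-d_S(u)$. Combining this with $2e(L)\le 8|L|$ and $d(u)\le d_S(u)+|L|$ gives
\[
\rho^2\le d(u)+e(L\setminus\{u\},S)+2e(L)+\varepsilon\sum_{v\in N(u)}d(v)\le 2n+13|L|+8\varepsilon n .
\]
Your lower bound gives $|L|<8n/(\varepsilon\rho)<4\sqrt{2n}/\varepsilon$, so $\varepsilon=0.01$ and $n\ge 10^{19}$ yield $\rho^2<2.1n$. $K_{3,7}$-freeness plays no role here.

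Repaired this way, your argument is an independent proof valid for every $1$-planar graph; it is the same device the paper uses later in Claim~\ref{c4}. The paper's proof is shorter but rests on the deep Theorem~\ref{thm:Wang}.
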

\begin{proof}[\bf Proof of Claim~\ref{c1}]
\textbf{Lower bound estimate.}
First, the join graph $K_2 \lor I_{n-2}$ is verified to be $F$-free and $1$-planar. By the extremality of $G$, we immediately have $\rho(G) \geq \rho(K_2 \lor I_{n-2})$.
Let the Perron eigenvector associated with $\rho(K_2 \lor I_{n-2})$ take the form $(a,a,b,b,\dots,b)^\top$. Invoking the eigenvector equation \eqref{Ax_u}, we derive the system:
\[
\begin{cases}
\rho(K_2 \lor I_{n-2}) a = a + (n-2)b,\\
\rho(K_2 \lor I_{n-2}) b = 2a.
\end{cases}
\]
Solving this algebraic system yields
\[
\rho(K_2 \lor I_{n-2}) = \frac{1+\sqrt{8n-15}}{2} > \sqrt{2n},
\]
which gives the lower bound $\rho(G) > \sqrt{2n}$.

\textbf{Upper bound estimate.}
By Theorem~\ref{thm:Wang} , $K_2 \lor P_{n-2}^{2+}$ uniquely maximizes the spectral radius over all $n$-vertex $1$-planar graphs. Since the forbidden subgraph $F$ is contained in $K_2 \lor P_{n-2}^{2+}$, we obtain $\rho(G) < \rho(K_2 \lor P_{n-2}^{2+})$.
Consider the positive trial vector $(a,a,b,b,\dots,b)^\top$. Define
\[
\rho_0 = \frac{5+\sqrt{8n-7}}{2}, \qquad \frac{a}{b} = \frac{-3+\sqrt{8n-7}}{4}.
\]
Direct substitution verifies the matrix inequality
\[
A\left(K_2 \lor P_{n-2}^{2+}\right) (a,a,b,\dots,b)^\top \leq \rho_0 (a,a,b,\dots,b)^\top.
\]
By the Rayleigh quotient characterization for nonnegative irreducible matrices,
\[
\max_{1\leq i\leq n} \frac{\big(A(K_2 \lor P_{n-2}^{2+})(a,a,b,\dots,b)^\top\big)_i}{\big((a,a,b,\dots,b)^\top\big)_i} \leq \rho_0.
\]
Applying Lemma \ref{lem:Rayleigh} yields $\rho(K_2 \lor P_{n-2}^{2+}) \leq \rho_0 = \frac{5+\sqrt{8n-7}}{2} < \sqrt{2.1n}$. Combining the two inequalities, we conclude $\rho(G) < \sqrt{2.1n}$.
\end{proof}

For notational brevity, set $\rho = \rho(G)$ and $V = V(G)$. Let $\mathbf{x}$ denote the Perron vector of the adjacency matrix $A(G)$, normalized such that its maximum entry equals $1$. Fix a vertex $z\in V$ with $x_z = 1$, and specify the constant $\eta = \frac{1}{7000}$. We partition the vertex set via
\[
L := \{v\in V : x_v \geq \eta\}, \qquad S := V \setminus L.
\]
For any vertex $v\in V$, the eigenvector equation \eqref{Ax_u} implies $\sqrt{2n}\,x_v < \rho x_v \leq d(v)$. For vertices in $L$, summing over $v\in L$ gives
\[
|L|\sqrt{2n}\eta < \sum_{v\in L} d(v) \leq \sum_{v\in V} d(v) < 8n,
\]
hence $|L| < \frac{4\sqrt{2n}}{\eta} < 0.1\eta n$.
Since the subgraph $G[L,S]$ is bipartite $1$-planar, and the induced subgraphs $G[S],G[L]$ are each $1$-planar, Lemmas \ref{lem:1planar} and \ref{lem:bi1planar}  provide the following edge count bounds:
\begin{equation}\label{eq:edge-bound-1}
e(L,S) \leq 2n + \frac{16\sqrt{2n}}{\eta},\quad  e(S)<4n,\quad e(L) < \frac{16\sqrt{2n}}{\eta},
\end{equation}
where the upper bound on $e(L,S)$ follows from $e(L,S)\le 2n+4|L|-12$ if $|L|\ge 2$, and from $e(L,S)\le n-1$ if $|L|=1$.

\begin{claim}\label{c2}
For every vertex $v\in V(G)$, $x_v > \frac{1}{\sqrt{2.1n}}$.
\end{claim}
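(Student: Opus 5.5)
The plan is a local edge-switching argument combined with the upper bound $\rho<\sqrt{2.1n}$ from Claim~\ref{c1}. Suppose, for contradiction, that some vertex $v$ satisfies $x_v\le \frac{1}{\sqrt{2.1n}}$. Then Claim~\ref{c1} gives $x_v<\frac{1}{\rho}$, that is, $\rho x_v<1$.

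\textbf{Step 1: $v\neq z$ and $v\not\sim z$.} Since $x_z=1$, we have $v\neq z$. By the eigenvector equation \eqref{Ax_u}, $\sum_{u\sim v}x_u=\rho x_v<1=x_z$, so $z$ is not a neighbour of $v$.

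\textbf{Step 2: the modified graph.} Let $G'$ be obtained from $G$ by deleting every edge incident with $v$ and adding the single edge $vz$. We check that $G'$ stays in the admissible class.
\begin{itemize}
\item \emph{$G'$ is $1$-planar.} Take the fixed $1$-planar drawing of $G$ and remove $v$ together with its edges; this is still a $1$-planar drawing of $G-v$. Now place $v$ inside a small disk around $z$ that meets no other edge, and draw $vz$ inside that disk without crossings.
\item \emph{$G'$ is $F$-free.} In $G'$ the vertex $v$ has degree $1$. Since $\delta(F)\ge2$, $v$ lies in no copy of $F$, so any copy of $F$ in $G'$ would lie in $G'-v=G-v\subseteq G$, contradicting that $G$ is $F$-free.
\end{itemize}
Hence $G'$ is an $n$-vertex $F$-free $1$-planar graph.

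\textbf{Step 3: comparing spectral radii.} Apply the left inequality of Lemma~\ref{lem:Rayleigh} to $A(G')$ with the Perron vector $\mathbf{x}$ of $G$. This gives
\[
\rho(G')-\rho\ \ge\ \frac{\mathbf{x}^\top\bigl(A(G')-A(G)\bigr)\mathbf{x}}{\mathbf{x}^\top\mathbf{x}}
=\frac{2x_vx_z-2x_v\sum_{u\sim v}x_u}{\mathbf{x}^\top\mathbf{x}}
=\frac{2x_v\,(1-\rho x_v)}{\mathbf{x}^\top\mathbf{x}}.
\]
The last expression is strictly positive, because $x_v>0$ and $\rho x_v<1$. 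So $\rho(G')>\rho(G)$.

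\textbf{Step 4: contradiction.} If $G'$ is disconnected, the Rayleigh bound still applies to $\lambda_1(G')=\rho(G')$. Alternatively, one can note that $\rho(G')>\rho(G)$ is enough to contradict extremality over all $n$-vertex $F$-free $1$-planar graphs. Either way, $\rho(G')>\rho(G)$ contradicts the maximality of $\rho(G)$, so $x_v>\frac{1}{\sqrt{2.1n}}$ for every $v$.

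The only delicate points are the $1$-planarity of $G'$, which is settled by the local re-embedding near $z$, and the use of $\delta(F)\ge 2$ to ensure $F$-freeness. The spectral computation itself is routine.
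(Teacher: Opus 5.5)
Your proposal is correct and follows the paper's proof essentially step for step. You use the same switch (delete all edges at $v$, add $vz$), the same $\delta(F)\ge 2$ argument for $F$-freeness, the same local re-embedding of $v$ near $z$, and the same Rayleigh estimate $\rho(G')-\rho\ge 2x_v(1-\rho x_v)/\mathbf{x}^\top\mathbf{x}>0$ with $\rho<\sqrt{2.1n}$ from Claim~\ref{c1}; your remarks that $v\neq z$ and that $G'$ may be disconnected are small points the paper leaves implicit.
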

\begin{proof}[\bf Proof of Claim~\ref{c2}]
We proceed by contradiction. Suppose there exists a vertex $v\in V$ such that $x_v \leq \frac{1}{\sqrt{2.1n}} < \frac{1}{\rho}$.
By the eigenvector relation, $v$ cannot be adjacent to $z$ (recall $x_z=1$). Construct a new graph $H$ from $G$: delete all edges incident to $v$, then add the single edge $vz$.
Deleting arbitrary edges preserves $1$-planarity; after isolating $v$, we may embed $v$ in a small neighborhood around $z$ and draw the edge $vz$ without introducing additional crossings, so $H$ remains $1$-planar.

We next claim $H$ is $F$-free. Assume for contradiction that $H$ contains a copy $F'$ of the forbidden subgraph $F$. Since $G$ is $F$-free, $F'$ must contain the newly added edge $vz$, which forces $v\in V(F')$. By construction, $N_H(v) = \{z\}$, hence $d_{F'}(v) \leq d_H(v) = 1$. This contradicts the minimum degree condition $\delta(F)\geq 2$ imposed on $F$. Therefore $H$ contains no $F$-subgraph.

Applying the Rayleigh quotient formula in Lemma \ref{lem:Rayleigh}:
\[
\rho(H)-\rho(G) \geq \frac{2x_v}{\mathbf{x}^\top \mathbf{x}} \bigl(x_z - \sum_{u\in N_G(v)} x_u\bigr)
= \frac{2x_v}{\mathbf{x}^\top \mathbf{x}} \bigl(1 - \rho x_v\bigr) > 0.
\]
This yields $\rho(H)>\rho(G)$, violating the maximality of the spectral radius of $G$. The contradiction completes the proof.
\end{proof}

Define $\overline{N}_S(v) := S \setminus \big(N_S(v)\cup \{v\}\big)$ to denote the set of vertices in $S$ nonadjacent to $v$. We now quantify the number of non-neighbors of $z$ within $S$.
\begin{claim}\label{c3}
$|\overline{N}_S(z)| \leq 5\sqrt{4.2}\eta n + \bigl(\frac{48}{\eta}+16\bigr)\sqrt{2.1n} \leq 11.4\eta n$.
\end{claim}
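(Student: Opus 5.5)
The plan is to evaluate the eigenvector equation twice at $z$ (a second-moment argument) and compare the resulting lower bound on $\rho^2$ with the edge counts in \eqref{eq:edge-bound-1}. Since $x_z=1$,
\[
\rho^2 = \rho^2 x_z = \sum_{u\sim z}\sum_{v\sim u} x_v = \sum_{v\in V} d_{N(z)}(v)\,x_v ,
\]
where $d_{N(z)}(v)=|N(v)\cap N(z)|$. Claim~\ref{c1} gives $\rho^2>2n$. The idea is that a large part of the right-hand side comes from pairs $(u,v)$ with $u\in N_S(z)$ and $v\in L$. In that range the contribution is at most $e(L,S)\le 2n+O(\sqrt n/\eta)$. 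All other contributions are small. Hence $N_S(z)$ must carry almost all of $S$, which makes $\overline N_S(z)$ small.

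\textbf{Step 1: split the double sum.} Over pairs $(u,v)$ with $u\sim z$, $v\sim u$, I split according to whether $u$ and $v$ lie in $L$ or $S$.
\begin{itemize}
\item If $u\in L$, the contribution is at most $\sum_{u\in L} d(u)x_v$-type terms, which I bound by $\rho\sum_{u\in L}x_u\le \rho|L|$. Using $|L|<4\sqrt{2n}/\eta$ and $\rho<\sqrt{2.1n}$, this is $O(\sqrt{2.1n}\cdot\sqrt{2n}/\eta)$, i.e. linear in $n$ with constant about $8\sqrt{2.1\cdot 2}/\eta$. This is too large, so I must do better. I will use $\sum_{u\in L}\rho x_u=\sum_{u\in L}\sum_{v\sim u}x_v\le 2e(L)+\eta\, e(L,S)+\dots$, i.e. I bound $\sum_{u\in L} \sum_{v\sim u} x_v \le 2e(L)+ e(L,S)$. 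With $x_v<\eta$ for $v\in S$, this sharpens to $2e(L)+\eta\,e(L,S)$ plus the $L$-$L$ part.
\item If $u,v\in S$, the contribution is at most $2e(S)\eta<8\eta n$.
\item If $u\in S$, $v\in L$, the contribution is at most $e(N_S(z),L)$.
\end{itemize}

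\textbf{Step 2: use the missing vertices.} The key point is that $e(N_S(z),L)=e(S,L)-e(\overline N_S(z)\cup\{z\}\cap S,L)$. Each $w\in\overline N_S(z)$ has $\sum_{v\sim w}x_v=\rho x_w>\rho/\sqrt{2.1n}$ by Claim~\ref{c2}. Since $S$-neighbours contribute at most $\eta\, d_S(w)$, this suggests $d_L(w)\ge \rho x_w-\eta d_S(w)$. It is cleaner to keep weights and compare against the bipartite bound: the weighted edge count from $L$ into $N_S(z)$ cannot exceed $e(L,S)$ minus the edges to $\overline N_S(z)$.

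A weakness: if $w$'s $L$-neighbours have small weight, this doesn't bite. I would therefore aim for $\rho^2\le 2n+4|L|+\text{small}-c|\overline N_S(z)|$ by noting that each vertex of $S$ has at least two neighbours in $L$ on average (bipartite bound $2n+4|L|$). Concretely, I would apply Lemma~\ref{lem:bi1planar} to $G[L,S]$ after discarding $\overline N_S(z)$ from $S$: $e(L,N_S(z))\le 2(|L|+|S|-|\overline N_S(z)|)+4|L|$. This gives the loss $2|\overline N_S(z)|$.

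\textbf{Step 3: combine.} Putting the pieces together gives
\[
2n<\rho^2\le 2n-2|\overline N_S(z)|+O(\eta n)+O(\sqrt n/\eta).
\]
Solving for $|\overline N_S(z)|$ yields a bound of the stated shape $5\sqrt{4.2}\,\eta n+(48/\eta+16)\sqrt{2.1n}$. The constants $5\sqrt{4.2}$ and $48/\eta$ should come from the $L$-$L$ terms, estimated using $\rho<\sqrt{2.1n}$, $|L|<4\sqrt{2n}/\eta$, and $e(L)<16\sqrt{2n}/\eta$, together with $\eta$ times $e(S)$ and $e(L,S)$. The final inequality $\le 11.4\eta n$ then follows from $n\ge10^{19}$ and $\eta=1/7000$.

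The main obstacle I expect is Step 1's $u\in L$ part: it must be bounded by $O(\eta n)$, not by $|L|\rho$. I would handle it via $\sum_{u\in L}\rho x_u\le 2e(L)+\eta e(L,S)+|L|$-type estimates.
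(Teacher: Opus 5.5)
Your route is correct, but it differs from the paper's. The paper argues at the first-moment level. It bounds $\sum_{v\in S}x_v\le (e(S,L)+2\eta e(S))/\rho$ from above and $\sum_{v\in N_S(z)}x_v\ge \rho x_z-\sum_{v\in L}x_v$ from below (inequalities \eqref{eq:sum-ns-lower}--\eqref{eq:sum-xs-bound}). The difference therefore controls only the \emph{weight} $\sum_{v\in\overline N_S(z)}x_v\le 5\eta\sqrt{2n}+16+48/\eta$. The paper then needs Claim~\ref{c2} to turn weight into a count, and that conversion is where the factors $\sqrt{4.2}=\sqrt2\sqrt{2.1}$ and $\sqrt{2.1n}$ come from.

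In your second-moment version the count enters directly, because Lemma~\ref{lem:bi1planar} is applied to $G[L,N_S(z)]$, which has only $n-|\overline N_S(z)|$ vertices:
\[
2n<\rho^2\le 2e(L)+\eta\,e(L,S)+2\bigl(n-|\overline N_S(z)|\bigr)+4|L|+2\eta\,e(S).
\]
Combined with \eqref{eq:edge-bound-1} and $|L|<4\sqrt{2n}/\eta$, this gives $|\overline N_S(z)|<5\eta n+(24/\eta+8)\sqrt{2n}$. That is termwise below the middle expression of the claim, and Claim~\ref{c2} is never used.

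So your constants do not come out as $5\sqrt{4.2}$ and $48/\eta$, as you guess in Step~3; you get better ones. Say that your bound implies the stated one. Then check the purely numerical inequality $5\sqrt{4.2}\,\eta n+(48/\eta+16)\sqrt{2.1n}\le 11.4\eta n$ for $n\ge10^{19}$; it is about $11.33\eta n$ at $n=10^{19}$.

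What the paper's route buys is reuse. Inequality \eqref{eq:sum-ns-lower} holds for every $v\in L$ and is applied verbatim to $w$ in Claim~\ref{c4}. You would have to rerun your argument with $\rho^2x_w>2nx_w$, paying an extra $2n(1-x_w)$, which still suffices.

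Three clean-ups for the write-up:
\begin{itemize}
\item In Step~1, drop the $\rho|L|$ detour and the trailing ``$+\dots$''. The bound is simply $\sum_{u\in L}\sum_{v\sim u}x_v\le 2e(L)+\eta\,e(L,S)$.
\item In Step~2, delete the remark that vertices of $S$ have ``at least two neighbours in $L$ on average''. It is backwards and unused; the loss $2|\overline N_S(z)|$ comes purely from the upper bound in Lemma~\ref{lem:bi1planar}.
\item Note that $e(L,N_S(z))\le 2(|L|+|N_S(z)|)+4|L|$ still holds trivially when $|N_S(z)|<|L|$ or when one side has fewer than two vertices, which is outside the literal hypotheses of that lemma.
\end{itemize}
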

\begin{proof}[\bf Proof of Claim~\ref{c3}]
Using the eigenvector equation:
\[
\sum_{v\in L} x_v = \frac{1}{\rho}\sum_{v\in L}\rho x_v
= \frac{1}{\rho}\sum_{v\in L}\sum_{y\sim v} x_y
= \frac{1}{\rho}\Bigl( \sum_{v\in L}\sum_{y\in N_S(v)} x_y + \sum_{v\in L}\sum_{y\in N_L(v)} x_y \Bigr).
\]
Bounding each term via edge counts:
\[
\sum_{v\in L} x_v \leq \frac{1}{\sqrt{2n}}\big(e(L,S)\eta + 2e(L)\big)
\leq \eta\sqrt{2n} + 16 + \frac{32}{\eta}.
\]
For arbitrary $v\in L$,
\[
\sqrt{2n}\,x_v \leq \rho x_v = \sum_{y\sim v}x_y
= \sum_{y\in N_L(v)}x_y + \sum_{y\in N_S(v)}x_y
\leq \eta\sqrt{2n} + 16 + \frac{32}{\eta} + \sum_{y\in N_S(v)}x_y,
\]
rearranged to
\begin{equation}\label{eq:sum-ns-lower}
\sum_{y\in N_S(v)}x_y \geq (x_v - \eta)\sqrt{2n} - 16 - \frac{32}{\eta}.
\end{equation}
Summing Perron entries over $S$ yields
\begin{equation}\label{eq:sum-xs-bound}
\sum_{v\in S}x_v = \frac{1}{\rho}\sum_{v\in S}\rho x_v
= \frac{1}{\rho}\sum_{v\in S}\sum_{y\sim v}x_y
\leq \frac{1}{\sqrt{2n}}\big(e(S,L)+2e(S)\eta\big)
\leq (1+4\eta)\sqrt{2n} + \frac{16}{\eta}.
\end{equation}
Substitute $v=z$ into \eqref{eq:sum-ns-lower} gives
\[
\sum_{v\in S}x_v = \sum_{v\in \overline{N}_S(z)}x_v + \sum_{v\in N_S(z)}x_v
\geq \sum_{v\in \overline{N}_S(z)}x_v + (x_z - \eta)\sqrt{2n} - 16 - \frac{32}{\eta}.
\]
Combine this inequality with \eqref{eq:sum-xs-bound} and the universal lower bound $x_v > 1/\sqrt{2.1n}$ from Claim~\ref{c2}, we get 
\[
\sum_{v\in \overline{N}_S(z)} \frac{1}{\sqrt{2.1n}} \leq 5\eta\sqrt{2n} + 16 + \frac{48}{\eta}.
\]
Recall $\eta = 1/7000$ and $n\geq 10^{19}$. Elementary algebraic simplification yields
\[
|\overline{N}_S(z)| \leq 5\sqrt{4.2}\eta n + \bigl(16+\frac{48}{\eta}\bigr)\sqrt{2.1n} \leq 11.4\eta n,
\]
as required.
\end{proof}

Our subsequent objective is to demonstrate the existence of a second vertex in $L$ adjacent to nearly all vertices of $S$.
\begin{claim}\label{c4}
There exists a vertex $w\in L$, $w\neq z$, such that $x_w > 1-22.7\eta$ and $|\overline{N}_S(w)| \leq 57.9\eta n$.
\end{claim}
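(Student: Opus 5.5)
The plan is to exploit the eigenvector equation at $z$ together with a double-counting over $S$, in the style of the proof of Claim~\ref{c3}. Since $x_z=1$, we have
\[
\rho=\sum_{y\sim z}x_y\le \sum_{y\in N_L(z)}x_y+\sum_{y\in S}x_y .
\]
So the first step is to get a lower bound on $\rho^2$ that forces a second heavy vertex. We write
\[
\rho^2=\rho\sum_{y\sim z}x_y=\sum_{y\sim z}\sum_{u\sim y}x_u .
\]
In this double sum we count, for each $u$, the number of common neighbours of $z$ and $u$, weighted by $x_u$. Contributions from $u\in S$ are at most $\eta$ times the number of paths $z$–$y$–$u$ with $u\in S$. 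When $y\in S$, these are bounded by $2e(S)\eta<8\eta n$. When $y\in L$, they are bounded using $e(L,S)$ and $|L|<0.1\eta n$, giving $O(\eta n)$ again. Contributions with $y\in L$ and $u\in L$ are at most $2e(L)=O(\sqrt n/\eta)$. The term $u=z$ contributes $d(z)x_z\le n$.

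This yields
\[
2n<\rho^2\le d(z)+\sum_{u\in L\setminus\{z\}}x_u\,|N(u)\cap N_S(z)|+c\,\eta n
\]
for an explicit constant $c$ of roughly $9$–$10$. The key observation comes next. By Lemma~\ref{lem:K37}, any three vertices have at most $6$ common neighbours, so the sets $N(u)\cap N_S(z)$ for $u\in L\setminus\{z\}$ have bounded triple overlaps. This will let us handle the sum above.

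Step two: compare the weighted sum with $\sum_{u\in L\setminus\{z\}}|N(u)\cap S|$. By Lemma~\ref{lem:bi1planar} and $d(z)\ge |N_S(z)|$, we have
\[
\sum_{u\in L\setminus\{z\}}|N_S(u)|\le e(L,S)-|N_S(z)|\le 2n+4|L|-|N_S(z)|.
\]
Combined with Claim~\ref{c3}, this is at most $n+11.4\eta n+O(\sqrt n/\eta)$. Also $d(z)\le n$. Hence
\[
\sum_{u\in L\setminus\{z\}}x_u\,|N_S(u)\cap N_S(z)|\ge n-c'\eta n .
\]
Now let $w$ maximise $x_u$ over $L\setminus\{z\}$. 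Then
\[
n-c'\eta n\le x_w\sum_{u}|N_S(u)\cap N_S(z)|\le x_w\,(n+c''\eta n),
\]
which gives $x_w\ge 1-(c'+c'')\eta$. Tracking the constants carefully should give $22.7$.

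Step three: bound $|\overline N_S(w)|$ by repeating the proof of Claim~\ref{c3} with $w$ in place of $z$. Inequality \eqref{eq:sum-ns-lower} with $v=w$ gives
\[
\sum_{y\in N_S(w)}x_y\ge (x_w-\eta)\sqrt{2n}-O(1/\eta).
\]
Subtracting this from \eqref{eq:sum-xs-bound} and using Claim~\ref{c2} yields
\[
|\overline N_S(w)|\le\sqrt{2.1n}\bigl((1-x_w+5\eta)\sqrt{2n}+O(1/\eta)\bigr)\le \sqrt{4.2}\,(27.7\eta)n+o(n),
\]
which is about $56.8\eta n$ plus lower-order terms. This is within the claimed $57.9\eta n$ for $n\ge10^{19}$.

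The main obstacle is step two. The sum over $u\in L\setminus\{z\}$ must be dominated by a single vertex rather than spread over many vertices of moderate weight. The averaging argument above works only if $\sum_u |N_S(u)|$ is at most about $n$, so that the weights cannot all be small. That in turn relies on the sharp constant $2$ in Lemma~\ref{lem:bi1planar}, applied with $z$ removed, and on the near-completeness of $N_S(z)$ from Claim~\ref{c3}. The delicate part is obtaining these constants precisely enough to reach $22.7$.
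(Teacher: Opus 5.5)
Your proposal is correct and follows essentially the paper's argument. The paper also expands $\rho^2 x_z$ as a weighted count of $z$--$y$--$u$ walks; it first relaxes this to $\rho^2+\rho\le\sum_{uv\in E(G)}(x_u+x_v)$ and then splits $E(G)$ into $E(L,S)$, $E(S)$ and $E(L)$. It then plays $\sum_{uv\in E(L\setminus\{z\},S)}x_u\ge(1-10.8\eta)n$ against $e(L\setminus\{z\},S)\le(1+11.9\eta)n$, and reruns the Claim~\ref{c3} argument for $w$, exactly as you do. Your $K_{3,7}$ triple-overlap remark is never used and can be dropped, since step two needs only $x_u\le x_w$ and the bipartite edge bound.
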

\begin{proof}[\bf Proof of Claim~\ref{c4}]
Applying the eigenvector equation twice for vertex $z$:
\[
\rho^2 x_z = \sum_{v\sim z}\sum_{y\sim v}x_y
= \sum_{y\in V} d_{N(z)}(y)x_y
\leq \sum_{y\in V}d(y)x_y - \sum_{y\in N(z)}x_y
= \sum_{uv\in E(G)}(x_u+x_v) - \rho x_z.
\]
Rearranging gives $2n \leq \rho^2+\rho \leq \sum_{uv\in E(G)}(x_u+x_v)$. Decompose the edge set into three disjoint parts $E(L,S),E(S),E(L)$ and apply the edge bounds \eqref{eq:edge-bound-1}:
\[
2n \leq \sum_{uv\in E(L,S)}(x_u+x_v) + 8\eta n + \frac{32\sqrt{2n}}{\eta}.
\]
Isolating the summation over edges incident to $L\setminus \{z\}$:
\[
\sum_{uv\in E(L\setminus \{z\},S)} x_u \geq 2n - d_S(z) - 10\eta n - 16\sqrt{2n} - \frac{32\sqrt{2n}}{\eta}
\geq (1-10.8\eta)n.
\]
We bound the degree $d_S(z)$ from below:
\[
d_S(z) \geq n - |L| - |\overline{N}_S(z)| \geq n - 0.1\eta n -11.4\eta n = (1-11.5\eta)n,
\]
which further implies
\[
e(L\setminus \{z\},S) \leq 2n + \frac{16\sqrt{2n}}{\eta} - d_S(z) \leq (1+11.9\eta)n.
\]
By the pigeonhole principle, some vertex $w\in L$ satisfies
\[
x_w \geq \frac{(1-10.8\eta)n}{(1+11.9\eta)n} > 1-22.7\eta.
\]
Substitute $v=w$ into inequality \eqref{eq:sum-ns-lower}, then combine with \eqref{eq:sum-xs-bound} and Claim \ref{c2}:
\[
\sum_{v\in \overline{N}_S(w)} \frac{1}{\sqrt{2.1n}} \leq 27.7\eta\sqrt{2n} + 16 + \frac{48}{\eta}.
\]
Evaluating the cardinality bound yields
\[
|\overline{N}_S(w)| \leq 27.7\sqrt{4.2}\eta n + \bigl(16+\frac{48}{\eta}\bigr)\sqrt{2.1n} \leq 57.9\eta n,
\]
concluding the proof.
\end{proof}

We now have two distinguished vertices $z,w\in L$ with $x_z=1$, $x_w>1-22.7\eta$, both adjacent to almost all vertices of $S$. Define
\[
A := \{z,w\},\qquad B := N(z)\cap N(w),\qquad S_1 := V\setminus (A\cup B).
\]
Combining previous cardinality bounds:
\[
|L|<0.1\eta n,\quad |S_1| \leq |\overline{N}_S(z)|+|\overline{N}_S(w)|+|L| < 69.4\eta n.
\]
Hence $|B|=n-|A|-|S_1| \geq (1-69.5\eta)n$ and $|B|>99|S_1|$.
For any $u\in V\setminus A$, $d_A(u)\leq 2$, with equality if and only if $u\in B$; consequently $d_A(u)\leq 1$ for all $u\in S_1$.
\begin{claim}\label{c5}
Every vertex $v\in V\setminus A$ satisfies $x_v < 521\eta$.
\end{claim}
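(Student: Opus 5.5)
We bound $x_v$ by splitting the eigenvector equation at $v$ according to whether the neighbour lies in $A$, in $L\setminus A$, or in $S$, and use 1-planarity to control how many neighbours $v$ can have in $B$.

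Fix $v\in V\setminus A$ and write
\[
\rho x_v=\sum_{y\in N_A(v)}x_y+\sum_{y\in N_{L\setminus A}(v)}x_y+\sum_{y\in N_S(v)\setminus A}x_y .
\]
The first sum is at most $2$, since $d_A(v)\le 2$ and $x_y\le 1$.

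For the remaining neighbours, the key structural input is Lemma~\ref{lem:K37}. Since $z$ and $w$ are both adjacent to every vertex of $B$, any vertex $v\notin A$ has at most $6$ neighbours in $B$. Otherwise $\{z,w,v\}$ together with seven common neighbours in $B$ would form a $K_{3,7}$. Hence all but at most $6$ neighbours of $v$ lie in $S_1$, and this gives
\[
\rho x_v\le 2+6+\sum_{y\in N(v)\cap S_1}x_y .
\]

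It remains to show that the total weight that $v$ can collect from $S_1$ is $O(\eta\sqrt n)$. We use the bounds already available:
\begin{itemize}
\item $|S_1|<69.4\eta n$;
\item $x_y\le 1$ for every $y$;
\item the sum $\sum_{y\in S}x_y$ is controlled by \eqref{eq:sum-xs-bound}.
\end{itemize}
Crude versions are too weak. A direct bound $\sum_{y\in S_1}x_y\le |S_1|$ loses, and \eqref{eq:sum-xs-bound} only gives $O(\sqrt n)$ overall.

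The better route is to bound $\sum_{y\in S_1}x_y$ via the eigenvector equation. Each $y\in S_1$ has $d_A(y)\le 1$, so
\[
\rho\sum_{y\in S_1}x_y\le |S_1|+e(S_1,B)\cdot\max_B x+2e(S_1)+e(S_1,L\setminus A)\cdot 1 .
\]
By Lemma~\ref{lem:K37}, each vertex of $S_1$ has at most $6$ neighbours in $B$, so $e(S_1,B)\le 6|S_1|$. Lemma~\ref{lem:1planar} gives $e(S_1)<4|S_1|$, and Lemma~\ref{lem:bi1planar} gives $e(S_1,L)\le 2|S_1|+O(|L|)$. Together these yield
\[
\sum_{y\in S_1}x_y\le \frac{13|S_1|+O(\sqrt n/\eta)}{\sqrt{2n}}\le c\,\eta\sqrt n
\]
for an explicit constant $c$, roughly $13\cdot 69.4/\sqrt2\approx 640$.

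Dividing the bound on $\rho x_v$ by $\rho>\sqrt{2n}$ then gives
\[
x_v\le \frac{8+c\,\eta\sqrt n}{\sqrt{2n}},
\]
which is only a constant times $\eta$ and not yet below $521\eta$.

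I expect the main obstacle to be obtaining the explicit constant $521$. The crude chain above gives about $640/\sqrt2\cdot\eta/\sqrt{\cdot}\cdots$, i.e. a constant of order several hundred, so the bookkeeping must be tight. I would refine it in two ways:
\begin{itemize}
\item use $d_A(y)\le 1$ on $S_1$, bounding that term by $|S_1|$ rather than $2|S_1|$;
\item bootstrap: first derive a weak bound $x_v\le M\eta$ uniformly for $v\notin A$, then feed $\max_{V\setminus A}x\le M\eta$ back into the sums over $N_S$ and $N_{L\setminus A}$, which become lower order.
\end{itemize}
After bootstrapping, the dominant contribution is about $(1+6M\eta+\cdots)|S_1|/\rho$.

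An alternative I would also try is to take the vertex $v^*$ maximizing $x$ on $V\setminus A$ and write $\rho x_{v^*}\le 2+(d(v^*)-2)x_{v^*}$. This only yields a useful bound when $d(v^*)$ is small, and $d(v^*)$ may be large, so the $S_1$-weight estimate above, with constants roughly $69.4\cdot(\text{small})/\sqrt{2}$, appears to be what produces the number $521$.
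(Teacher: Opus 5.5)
Your skeleton is the paper's: bound the contribution of $A\cup B$ to $\rho x_v$ by $2+6=8$ via Lemma~\ref{lem:K37}, then control $\sum_{y\in S_1}x_y$ by summing the eigenvector equation over $S_1$. The gap is that the proposal never closes. The reason you give for why it does not close is a bookkeeping error, not a real obstacle.

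\textbf{Where the bookkeeping goes wrong.}
\begin{enumerate}
\item The extra term $e(S_1,L\setminus A)$ should not be there. Since $S_1=V\setminus(A\cup B)$, every neighbour of a vertex $y\in S_1$ already lies in $A$, $B$ or $S_1$, so this term double counts. Also, $S_1$ and $L\setminus A$ need not be disjoint, so Lemma~\ref{lem:bi1planar} does not apply to it as written.
\item Your coefficient $13$ does not follow from the ingredients you list. The bounds $d_A(y)\le 1$, $d_B(y)\le 6$ and $e(S_1)<4|S_1|$ give $1+6+8=15$.
\item The final evaluation is off. A bound $\sum_{S_1}x\le c\,\eta\sqrt n$ gives $x_v\le 8/\sqrt{2n}+c\eta/\sqrt2$. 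So even your $c\approx 640$ would already give about $452\eta$, contrary to your statement that it is ``not yet below $521\eta$''.
\end{enumerate}

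\textbf{The missing step.} Use the clean count from the paper, bounding every neighbour weight by $1$:
\[
\sum_{y\in S_1}x_y\le\frac1\rho\sum_{y\in S_1}d(y)\le\frac{7|S_1|+2e(S_1)}{\sqrt{2n}}<\frac{15|S_1|}{\sqrt{2n}}<\frac{15\cdot 69.4\,\eta n}{\sqrt{2n}}=520.5\,\eta\sqrt{2n}.
\]
Hence $x_v\le\frac{1}{\rho}\bigl(8+\sum_{y\in S_1}x_y\bigr)<\frac{8}{\sqrt{2n}}+520.5\eta<521\eta$, because $8/\sqrt{2n}<\eta/2$ for $n\ge 10^{19}$. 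This is exactly where $521$ comes from, and no bootstrapping is needed.

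Your bootstrap would in fact work if carried out, and would give a much smaller constant. Feeding $x\le M\eta$ back into the $B$- and $S_1$-terms lowers the coefficient of $|S_1|$ from $15$ to $1+14M\eta$. As the proposal stands, though, the bootstrap is only announced, not executed, so the claimed bound is never established.
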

\begin{proof}[\bf Proof of Claim~\ref{c5}]
By Lemma \ref{lem:K37}, $d_B(v)\leq 6$ holds for all $v\in S_1$. Summing Perron entries over $S_1$ yields
\[
\sum_{v\in S_1}x_v = \frac{1}{\rho}\sum_{v\in S_1}\rho x_v
\leq \frac{1}{\rho}\sum_{v\in S_1}d(v)
\leq \frac{1}{\rho}\big(7|S_1|+2e(S_1)\big) < \frac{15|S_1|}{\sqrt{2n}} < 520.5\eta\sqrt{2n}.
\]
Any vertex $v\in V\setminus A$ has at most 8 neighbors within $A\cup B$. The eigenvector equation gives
\[
x_v = \frac{1}{\rho}\sum_{u\sim v}x_u
\leq \frac{1}{\sqrt{2n}}\bigl(8 + \sum_{u\in S_1}x_u\bigr) < 521\eta,
\]
which completes the estimate.
\end{proof}

The central structural result of this subsection asserts the emptiness of $S_1$.
\begin{claim}\label{c6}
$S_1 = \emptyset$.
\end{claim}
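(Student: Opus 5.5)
I would argue by contradiction. Suppose $S_1\neq\emptyset$ and pick $u\in S_1$. The plan is to rewire $u$ so that it becomes adjacent to both $z$ and $w$, and to show that this strictly increases the spectral radius while keeping the graph $1$-planar and $F$-free. That contradicts the choice of $G$.

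\textbf{The modification.} Let $H$ be obtained from $G$ by deleting all edges incident to $u$ and then adding the edges $uz$ and $uw$.

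\textbf{$1$-planarity of $H$.} I would use that $|B|\ge(1-69.5\eta)n$ is huge. In the fixed drawing, the vertices $z,w$ together with $B$ form a $K_{2,|B|}$. Since $G$ is $1$-planar, some vertex $b\in B$ has the edges $zb,wb$ crossed at most in a controlled way, so a face region exists in which a new vertex of degree two, attached to $z$ and $w$, can be placed. Concretely, I would put $u$ next to $b$ and draw $uz,uw$ alongside $zb,wb$. Each new edge inherits at most the single crossing of its companion edge, or no crossing at all if $zb,wb$ are uncrossed. A counting argument with $e(G)\le 4n-8$ (Lemma~\ref{lem:1planar}) shows that some $b\in B$ has both $zb,wb$ uncrossed. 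This gives a $1$-planar drawing of $H$.

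\textbf{$F$-freeness of $H$.} Suppose $H$ contains a copy $F'$ of $F$. Then $F'$ must use $u$, and $d_{F'}(u)=2$, because $\delta(F)\ge 2$ and $N_H(u)=\{z,w\}$. Choose a vertex $b'\in B\setminus V(F')$; this is possible since $|B|>n/2\ge|V(F)|$. Replacing $u$ by $b'$ yields a copy of $F$ in $G$, because $b'$ is adjacent to both $z$ and $w$ in $G$. This is a contradiction.

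\textbf{Spectral comparison.} By Lemma~\ref{lem:Rayleigh},
\[
\rho(H)-\rho(G)\ \ge\ \frac{2x_u}{\mathbf{x}^\top\mathbf{x}}\Bigl(x_z+x_w-\sum_{y\in N_G(u)}x_y\Bigr),
\]
up to a negligible correction for the common neighbours of $u$ that lie in $A$. Since $u\in S_1$, we have $d_A(u)\le1$, so
\[
\sum_{y\in N_G(u)}x_y\le 1+\sum_{y\in N_G(u)\setminus A}x_y.
\]
By Lemma~\ref{lem:K37}, $d_B(u)\le 6$, and by Claim~\ref{c5} each such neighbour satisfies $x_y<521\eta$. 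The neighbours of $u$ in $S_1$ are harder to control. Their number is not bounded a priori, but
\[
\sum_{y\in N(u)\setminus A}x_y=\rho x_u-\sum_{y\in N_A(u)}x_y\le \rho x_u .
\]
I would instead bound $\rho x_u\le 1+\sum_{y\in N(u)\setminus A}x_y$ and compare it directly with $x_z+x_w\ge 2-22.7\eta$. To control $\sum_{y\in N(u)\setminus A}x_y$, I would first choose $u\in S_1$ to be a vertex of minimum degree in $G[S_1]$. Then $d_{S_1}(u)\le 7$, since $G[S_1]$ is $1$-planar, so Lemma~\ref{lem:1planar} gives average degree less than $8$. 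Consequently $u$ has at most $13$ neighbours outside $A$, each with weight below $521\eta$, and
\[
\sum_{y\in N_G(u)}x_y\le 1+13\cdot 521\eta<2-22.7\eta,
\]
using $\eta=1/7000$. Hence $\rho(H)>\rho(G)$, a contradiction, and $S_1=\emptyset$.

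\textbf{Main obstacle.} I expect the $1$-planarity step, finding a spot for $u$ adjacent to both $z$ and $w$, to be the most delicate. If the uncrossed-edge counting fails, I would fall back on the standard trick of placing $u$ very close to some $b\in B$ whose edges to $z$ and $w$ are uncrossed. Such a $b$ exists because at most $O(n)$ crossings affect only a bounded number of the roughly $2|B|$ edges in the relevant rotation, but this needs care to make precise.
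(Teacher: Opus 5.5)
\textbf{The gap is the $1$-planarity of $H$.} Your spectral comparison and your $F$-freeness argument are sound:
\begin{itemize}
\item With $u$ of minimum degree in $G[S_1]$ you get $d_{B\cup S_1}(u)\le 13$.
\item The Rayleigh difference is exactly $2x_u(x_z+x_w-\rho x_u)/\mathbf{x}^\top\mathbf{x}$. No correction is needed, since an edge that is deleted and re-added cancels.
\item Claim~\ref{c5} gives $\rho x_u\le 1+13\cdot 521\eta<2-22.7\eta$.
\end{itemize}
The problem is the $1$-planarity step you flagged. The counting argument you invoke does not exist. After the roughly $2|B|$ spokes $zb,wb$, the bound $e(G)\le 4n-8$ still leaves room for about $2n$ further edges, and each of them may cross one spoke. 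So nothing forces some $b\in B$ to have both $zb$ and $wb$ uncrossed. In the standard drawing of $K_2\vee P_{n-2}^{2+}$, every chord $u_{i-1}u_{i+1}$ crosses $zu_i$ or $wu_i$. The hypothetical $G$ may look like this on almost all of $B$, and your fallback about $O(n)$ crossings fails for the same reason.

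Two further problems remain in the drawing step. First, ``inheriting'' a crossing is illegal: if an edge $e$ crosses $zb$, a copy of $zb$ drawn alongside it also crosses $e$, so $e$ is crossed twice. Second, even when $zb$ and $wb$ are both uncrossed, a curve running alongside the path $zbw$ crosses every other edge at $b$ on that side. That can be up to six edges into $B$ plus edges into $S_1$. What you actually need is a face with both $z$ and $w$ on its boundary, and no such face need exist.

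\textbf{The missing idea is to pay for the room, as the paper does.}
\begin{itemize}
\item Since $|B|>99|S_1|$ and each vertex of $S_1$ has at most $6$ neighbours in $B$, there are $16$ vertices $w_1,\dots,w_{16}$ of $B$, consecutive around $z$, with no neighbours in $S_1$.
\item Delete $E(G[W])$. This is at most $32$ edges, by $1$-planarity of $G[A\cup W]$.
\item Afterwards $w_8$ and $w_9$ are adjacent only to $z$ and $w$. So the region bounded by $zw_8,ww_8,zw_9,ww_9$ can host the rewired vertices, with uncrossed edges to $z$ and $w$.
\item Because $x_{w_i}<2.5/\rho$, the deletion costs only $O(\rho^{-2})$ in the Rayleigh quotient, while the gain is of order $\rho^{-1}$.
\end{itemize}
For the gain, the paper uses $x_{v_0}\ge x_w/\rho$ for some $v_0\in S_1$ with $d_A(v_0)=1$. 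When every vertex of $S_1$ has $d_A=0$, it simply attaches $S_1$ to $z$ alone, which is trivially $1$-planar and $F$-free because $\delta(F)\ge 2$.

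Adding this device to your single-vertex modification repairs your proof. The vertex $u$ has no neighbours in $W$, so the two edge changes are disjoint. Claim~\ref{c2} gives $x_u>1/\sqrt{2.1n}$. Hence your gain $2x_u(x_z+x_w-\rho x_u)\ge 0.05\,x_u$ dominates the loss $2\cdot 32\,(2.5/\rho)^2$ for $n\ge 10^{19}$, with no case split on $d_A(u)$ needed.
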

\begin{proof}[\bf Proof of Claim~\ref{c6}]
We argue by contradiction. Assume $|S_1|=r\geq 1$. Iteratively select vertices $u_1,u_2,\dots,u_r\in S_1$ such that $d_{S_i}(u_i)\leq 7$ for each $i$: if every vertex in the current subset had degree at least 8, the edge count $e(S_i)\geq 4|S_i|$ would violate the $1$-planar edge bound $e(S_i)<4|S_i|$.
For each selected vertex $u_i$, $d_B(u_i)\leq 6$ and $d_A(u_i)\leq 1$, so $d_{A\cup B\cup S_i}(u_i)\leq 14$. We split the remainder of the proof into two exhaustive cases based on $\max_{u\in S_1}d_A(u)$.
\paragraph{Case 1: There exists $v_0\in S_1$ with $d_A(v_0)=1$.}
Applying the eigenvector equation to $v_0$ gives
\begin{equation}\label{e4.4}
\rho x_{v_0} = \sum_{u\sim v_0}x_u \geq x_w, \ \ \ {\rm i.e.,}\ \  x_{v_0}\geq x_w/\rho.
\end{equation}
Fix a minimal-crossing $1$-planar drawing of $G$, and arrange vertices of $B$ in cyclic order around $z$. Since each vertex in $S_1$ has at most 6 neighbors in $B$, at most $6|S_1|$ vertices of $B$ have a neighbor in $S_1$.
Recall $|B|>99|S_1|$; by the pigeonhole principle, we may extract 16 consecutive vertices $w_1,w_2,\dots,w_{16}\in B$ in this cyclic order such that $N_{S_1}(w_i)=\emptyset$ for all $1\leq i\leq 16$. Let $W=\{w_1,\dots,w_{16}\}$.
For each $w_i\in W$, the eigenvector equation yields
\[
\rho x_{w_i} = x_z+x_w + \sum_{y\in N_B(w_i)}x_y \leq 2.5,
\]
hence $x_{w_i}<2.5/\rho$.

Now construct the auxiliary graph
\[
G_1:=G-\{u y:u\in S_1,\ y\in N_G(u)\}+\{u z,u w:u\in S_1\}-E(G[W]).
\]

We verify that $G_1$ is $1$-planar. Start with the fixed minimal-crossing drawing of $G$. Delete all edges incident to $S_1$ and all edges of $G[W]$; this isolates all vertices of $S_1$ and removes $G[W]$. In the remaining drawing, any vertex of $B\setminus W$ lying on the $w_1$-side of $W$ cannot be adjacent to any $w_i$ with $i\ge8$; otherwise such an edge would cross at least two edges among $zw_1,\dots,zw_7$ and $ww_1,\dots,ww_7$, contradicting $1$-planarity. Similarly, no vertex on the $w_{16}$-side of $W$ is adjacent to any $w_i$ with $i\le9$.
If some edge from $\{zw_8,ww_8\}$ crosses some edge from $\{zw_9,ww_9\}$, redraw them locally without crossings; otherwise keep them as they are. Let $R$ be the region bounded by these four edges $zw_8$, $ww_8$, $zw_9$, and $ww_9$. We embed $S_1$ in the interior of $R$ and draw the new edges from $z$ and $w$ to $S_1$ as pairwise non-crossing curves within $R$. This introduces no additional crossings.

It remains to verify that $G_1$ is $F$-free. Suppose to the contrary that $G_1$ contains a copy $F'$ of $F$. Because $G$ itself is $F$-free, $V(F')\cap S_1\neq\emptyset$.  Note that $n \ge 2|V(F)|$. Hence, $|B| \ge (1-69.5\eta)n >|V(F)|=|V(F')|$, and so 
$|B\setminus V(F')| = |B| - |B\cap V(F')| > |V(F')| - |B\cap V(F')| \ge |S_1 \cap V(F')|.$ 
Replacing each vertex of $S_1\cap V(F')$ with a distinct vertex in $B\setminus V(F')$ having a superset of its neighborhood yields a copy of $F$ in $G$, a contradiction. Thus $G_1$ is $F$-free.

We now consider the spectral difference via Lemma \ref{lem:Rayleigh}:
\begin{equation}\label{e4.5}
  \rho(G_1) - \rho(G) \geq \frac{2}{\mathbf{x}^{\top} \mathbf{x}} \Bigl[ \sum_{u_i\in S_1} x_{u_i} (x_{z}+ x_{w}  - \sum_{y \in N_{A\cup B\cup S_i}(u_i)} x_{y} )- \sum_{ab\in G[W]}x_{a} x_{b} \Bigr]
\end{equation}
Note that $G[A\cup W]$ is a $1$-planar graph. Then $e(G[W])\leq e(G[A\cup W])-2|W|\leq32$, and so $\sum_{ab\in G[W]}x_a x_b \leq 32(2.5/\rho)^2$. Recall that $\eta=\frac{1}{7000}$, $x_w\geq 1-22.7\eta$ and $x_u< 521\eta$ for $u\in V\setminus A$. Then, for any $u_i\in S_1$, we have
\begin{equation}\label{e4.6}
x_{z}+ x_{w}  - \sum_{y \in N_{A\cup B\cup S_i}(u_i)} x_{y}\geq x_w - \sum_{y \in N_{B\cup S_i}(u_i)}x_y > 1-22.7\eta-13\times 521\eta\geq 0.02.
\end{equation}
Clearly, $\sum_{u_i\in S_1} x_{u_i}\ge x_{v_0}.$ Together with \eqref{e4.4}-\eqref{e4.6}, we obtain 
\[
\rho(G_1)-\rho(G) \geq \frac{2}{\mathbf{x}^\top \mathbf{x}}\bigl(0.02x_{v_0}  - 32(\frac{2.5}{\rho})^2\bigr)\geq   \frac{2}{\mathbf{x}^\top \mathbf{x}}\bigl(0.02 \frac{x_w}{\rho} -\frac{200}{\rho^2} \bigr) > 0.
\]
This contradicts the maximality of $\rho(G)$.
\paragraph{Case 2: $d_A(u)=0$ for all $u\in S_1$.}
Construct a new graph
\[
G_2 := G - \{uy: u\in S_1, y\in N(u)\} + \{uz: u\in S_1\}.
\]
It is straightforward to check $G_2$ remains $1$-planar and $F$-free via the same subgraph replacement argument used in Case 1. By the Rayleigh quotient formula in Lemma \ref{lem:Rayleigh},
\[
\rho(G_2)-\rho(G) \geq \frac{2}{\mathbf{x}^\top \mathbf{x}}\sum_{u_i\in S_1}x_{u_i}\bigl(x_z - \sum_{y\in N_{B\cup S_i}(u_i)}x_y\bigr)
> \frac{2}{\mathbf{x}^\top \mathbf{x}}\sum_{u_i\in S_1}x_{u_i}\big(1-13\cdot 521\eta\big) > 0.
\]
Again we obtain $\rho(G_2)>\rho(G)$, a contradiction.

Both cases lead to contradiction; hence $S_1=\emptyset$.
\end{proof}

The emptiness of $S_1$ implies $V = A\cup B$, with $A=\{z,w\}$ and $B=N(z)\cap N(w)$. Moreover $N(z)\setminus \{w\} = N(w)\setminus \{z\}$, meaning $z,w$ are symmetric and both have maximum degree in $G$. We now finalize the Perron vector structure on $A$ and $B$.
\begin{claim}\label{c7}
The normalized Perron entries satisfy $x_z = x_w = 1$.
\end{claim}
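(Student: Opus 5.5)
We need to show $x_w=x_z=1$. After Claim~\ref{c6} we know $V=A\cup B$ with $B=N(z)\cap N(w)=V\setminus\{z,w\}$. Hence $N(z)\setminus\{w\}=N(w)\setminus\{z\}=B$. The plan is to show that swapping $z$ and $w$ is an automorphism of $G$, or more directly to compare the eigenvector equations at $z$ and $w$. Since $x_z=1$ is the maximum entry, it suffices to show $x_w=x_z$.

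First I will settle whether $zw\in E(G)$. The two possibilities are handled identically, since the adjacency between $z$ and $w$ contributes symmetrically. Write $\epsilon=1$ if $zw\in E(G)$ and $\epsilon=0$ otherwise. The eigenvector equation \eqref{Ax_u} at the two vertices gives
\[
\rho x_z=\epsilon x_w+\sum_{y\in B}x_y,\qquad \rho x_w=\epsilon x_z+\sum_{y\in B}x_y .
\]
Subtracting, we get $(\rho+\epsilon)(x_z-x_w)=0$. Since $\rho>\sqrt{2n}>0$ by Claim~\ref{c1}, we have $\rho+\epsilon>0$, and therefore $x_w=x_z=1$.

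Alternatively, one can observe that the transposition $(z\,w)$ is a graph automorphism, because the two vertices have identical neighborhoods apart from each other. The Perron vector is unique up to scaling and is therefore invariant under automorphisms, which again gives $x_z=x_w$. I would present the direct computation, since it needs no appeal to uniqueness.

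The only point requiring care is confirming that $N(z)\setminus\{w\}=B$ exactly, i.e., that $z$ has no neighbor outside $A\cup B$. This follows at once from $S_1=\emptyset$ (Claim~\ref{c6}), because $V\setminus A=B$. There is no real obstacle here.
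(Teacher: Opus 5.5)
Your proof is correct and follows the paper's reasoning. The paper justifies Claim~\ref{c7} only by remarking that, once $S_1=\emptyset$, $N(z)\setminus\{w\}=N(w)\setminus\{z\}$, so $z$ and $w$ are symmetric. Your subtraction of the two eigenvector equations, which gives $(\rho+\epsilon)(x_z-x_w)=0$, is the explicit form of that symmetry argument.
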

\begin{claim}\label{c8}
For every vertex $u\in B$, $x_u\in \big[\frac{2}{\rho},\ \frac{2}{\rho-6}\big].$
\end{claim}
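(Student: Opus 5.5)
The plan is to use the eigenvector equation \eqref{Ax_u} at a vertex $u\in B$, together with the structure we now have. By Claims~\ref{c6} and \ref{c7}, $V=A\cup B$, $u$ is adjacent to both $z$ and $w$, and $x_z=x_w=1$. So for every $u\in B$,
\[
\rho x_u = x_z + x_w + \sum_{y\in N_B(u)} x_y = 2 + \sum_{y\in N_B(u)} x_y .
\]
(If $u$ happens to be adjacent to something in $A$ only through $z,w$, nothing else appears, since $V\setminus A = B$.)

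The lower bound is immediate. The sum over $N_B(u)$ is nonnegative because $\mathbf{x}>\mathbf{0}$, so $\rho x_u\ge 2$, which gives $x_u\ge 2/\rho$.

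For the upper bound I need two ingredients.
\begin{enumerate}
\item A degree bound $d_B(u)\le 6$ for every $u\in B$. Suppose $u$ had seven neighbours $y_1,\dots,y_7\in B$. Each $y_i$ is adjacent to $z$ and to $w$, so $\{u,z,w\}$ and $\{y_1,\dots,y_7\}$ would span a $K_{3,7}$. This contradicts Lemma~\ref{lem:K37}.
\item A uniform bound on the entries of $B$. Set $M:=\max_{y\in B}x_y$ and choose $u^*\in B$ attaining it. Then
\[
\rho M = 2+\sum_{y\in N_B(u^*)}x_y \le 2+6M,
\]
so $(\rho-6)M\le 2$. Since $\rho>\sqrt{2n}>6$ by Claim~\ref{c1}, this gives $M\le 2/(\rho-6)$, and hence $x_u\le 2/(\rho-6)$ for every $u\in B$.
\end{enumerate}

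The only step needing care is the degree bound via $K_{3,7}$-freeness. It works only because every vertex of $B$ is a common neighbour of $z$ and $w$, which is exactly what Claim~\ref{c6} provides. Taking the maximum over $B$ first then avoids any iteration. Together with Claims~\ref{c1}, \ref{c6} and \ref{c7}, this yields parts (i)–(iii) of Theorem~\ref{thm:F}.
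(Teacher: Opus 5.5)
Your proposal is correct and follows essentially the same argument as the paper. The lower bound comes from $\rho x_u \ge x_z + x_w = 2$. The upper bound comes from taking $v\in B$ with $x_v$ maximal, using $d_B(v)\le 6$ (Lemma~\ref{lem:K37}), and solving $\rho x_v \le 2 + 6x_v$. The paper leaves implicit why $d_B(v)\le 6$ holds, namely the $K_{3,7}$ formed by $\{v,z,w\}$ and seven common neighbours; you state this explicitly.
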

\begin{proof}[\bf Proof of Claim~\ref{c8}]
Take arbitrary $u\in B$. Since $uz,uw\in E(G)$ and $x_z=x_w=1$, the eigenvector equation gives $\rho x_u \geq x_z+x_w=2$, which rearranges to the lower bound $x_u\geq 2/\rho$.

For the upper bound, pick $v\in B$ such that $x_v = \max_{u\in B}x_u$. By Lemma \ref{lem:K37}, $d_B(v)\leq 6$. The eigenvector relation yields
\[
\rho x_v \leq x_z+x_w + 6x_v = 2+6x_v,
\]
solving for $x_v$ gives $x_v \leq 2/(\rho-6)$. The upper bound holds uniformly for all vertices in $B$.
\end{proof}
Now we are ready to give the proof of Theorem~\ref{thm:F}.
\begin{proof}[\bf Proof of Theorem~\ref{thm:F}]
Combining the conclusions of Claims \ref{c1}, \ref{c6}, \ref{c7} and \ref{c8}, the structural characterization of the extremal $F$-free $1$-planar graph is fully established, which completes the proof.
\end{proof}

\section{\normalsize Proof of Theorem~\ref{thm:C5}}\setcounter{equation}{0}\label{s5}

For integer $n \geq 10^{19}$, let $G\in \operatorname{SPEX}_{\mathcal{P}_1}(n,tC_5)$ with $t\in\{1,2\}$, and denote $\rho=\rho(G)$ as the spectral radius of $G$. Let $\mathbf{x}$ be the Perron vector of $G$ normalized such that $\max_{v\in V(G)} x_v=1$. 

We first claim that $G$ is connected. Suppose for contradiction that $G$ is disconnected: pick a connected component $G_1$ satisfying $\rho(G_1)=\rho(G)$, and let $G_2$ be an arbitrary other component. Add an edge between a vertex of $G_1$ and a vertex of $G_2$ to yield a new graph $G'$. This edge addition cannot generate a 5-cycle, so $G'$ remains $tC_5$-free and 1-planar. By the Perron--Frobenius theorem, $\rho(G')>\rho(G_1)=\rho(G)$, which contradicts the extremality of $G$. Therefore, $G$ must be connected.

Note that the structural inclusions $tC_5\subseteq K_2\vee P_{n-2}^{2+}$, $tC_5\not\subseteq K_2\vee I_{n-2}$, and $\delta(tC_5)\ge 2$. Invoking Theorem~\ref{thm:F}, $\rho> \sqrt{2n}$ and there exist two distinct vertices $z,w\in V(G)$ such that setting $M=V(G)\setminus\{z,w\}$, the following hold:
\[
\text{$N(z)\cap N(w)=M,\quad x_z=x_w=1,\quad \frac{2}{\rho}\le x_u\le \frac{2}{\rho-6}$ \,for all\, $u\in M.$}
\]
Define the induced subgraph $H=G[M]$. Since $G$ is $K_{3,7}$-free, the maximum degree satisfies $\Delta(H)\le 6$. We now proceed to prove Theorem~\ref{thm:C5} via a sequence of auxiliary facts.

\begin{thm}[Restatement of Theorem~\ref{thm:C5}]\label{t5.1}
Let $G\in \operatorname{SPEX}_{\mathcal{P}_1}(n,tC_5)$ for $t\in\{1,2\}$. Then
\[
G=
\begin{cases}
K_2\vee I_{n-2}, & t=1,\\
K_2\vee (B_7\cup T_{n-9}), & t=2.
\end{cases}
\]
\end{thm}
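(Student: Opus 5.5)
The plan is to use the structure supplied by Theorem~\ref{thm:F}. Every $u\in M$ is adjacent to both $z$ and $w$, so $G$ is $K_2\vee H$ or $2K_1\vee H$ with $H=G[M]$, and $\Delta(H)\le 6$. The argument then has three steps: translate $tC_5$-freeness into a condition on $H$ alone; maximize the leading terms of $\sum_{J\in\mathcal C(H)}f_J(\rho)$ under that condition; and settle ties and the edge $zw$ with Lemma~\ref{lem:f-principle}. Throughout I use the expansion
\[
f_J(\rho)=\frac{|J|}{\rho}+\frac{2e(J)}{\rho^2}+\frac{\sum_{v\in J}d_J(v)^2}{\rho^3}+O\Big(\frac{|J|}{\rho^4}\Big),
\]
which is legitimate because $\rho>\sqrt{2n}$ and $\Delta(H)\le 6$. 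The first term is the same for all $H$ on $n-2$ vertices. So, up to an error that is negligible for $n\ge 10^{19}$, the extremal $H$ first maximizes $e(H)$ and then $\sum_v d_H(v)^2$.

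\emph{Case $t=1$.} Suppose $ab\in E(H)$. Since $|M|\ge 3$, there is some $u\in M\setminus\{a,b\}$, and then $z\,a\,b\,w\,u\,z$ is a $5$-cycle. Hence $H=I_{n-2}$. It remains to check that the edge $zw$ is present. In $K_2\vee I_{n-2}$ any cycle through $zw$ continues into an independent set and must close after three vertices, so this graph is $C_5$-free and $1$-planar. Adding $zw$ strictly increases $\rho$, so maximality forces $zw\in E(G)$, and $G=K_2\vee I_{n-2}$.

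\emph{Case $t=2$: reduction to a condition on $H$.} I list how a $C_5$ can sit in $G$. It can lie entirely in $H$. It can use exactly one of $z,w$, which requires a path $P_4$ in $H$. It can use both $z,w$ nonadjacently in the cycle, which needs one edge of $H$ and one further vertex. Or it can use the edge $zw$, which needs a $P_3$ in $H$. Two disjoint $5$-cycles together use $z$ and $w$ at most once each. I will use the necessary condition that $H$ does not contain two vertex-disjoint subgraphs, each a $P_4$ or a $C_5$. The main consequence is that at most one component of $H$ contains a $P_4$ or $C_5$, and that component has no two disjoint such subgraphs. Every other component is $P_4$-free, hence a triangle or a star $K_{1,k}$ with $k\le 6$.

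\emph{Case $t=2$: edge and degree count.} Among $P_4$-free components, a triangle attains $e(J)=|J|$, while stars lose one edge each. For the exceptional component $J_0$ I must prove $e(J_0)\le |J_0|+5$, with equality only for $B_7=K_1\vee 2K_3$. This has to use $\Delta\le 6$, the absence of two disjoint $P_4/C_5$'s, and the $1$-planarity of $K_2\vee J_0$ (Lemmas~\ref{lem:1planar} and~\ref{lem:K37}). I expect this to be the main obstacle. I would attack it through a longest path or cycle in $J_0$: removing its vertex set must leave a $P_4$-free graph, which bounds the number of edges leaving it. Granting this bound, the optimum is $B_7$ together with triangles on the remaining $n-9$ vertices. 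When $3\nmid (n-9)$, a residue of one or two vertices must be absorbed at a cost of one edge. Among the configurations with equal edge count, $K_{1,4}$ (replacing one triangle) or $K_{1,6}$ (replacing two) maximizes $\sum d^2$. This gives exactly $T_{n-9}$, and I would verify the degree-square comparison with Lemma~\ref{lem:f-principle}.

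Finally, $K_2\vee(B_7\cup T_{n-9})$ is $1$-planar and $2C_5$-free: $B_7$ carries the only possible cycles, and two disjoint ones would need two disjoint $P_3$'s in $B_7$ alongside disjoint uses of $z$ and $w$. So the edge $zw$ may be added, and it must be present by maximality.
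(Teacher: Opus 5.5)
Your $t=1$ argument is correct, and a bit simpler than the paper's, because $H=I_{n-2}$ is forced without knowing whether $zw\in E(G)$.

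For $t=2$, the bound $e(J_0)\le |J_0|+5$, with equality only for $B_7$, is essentially the whole theorem, and you only grant it. The tools you name cannot deliver it:
\begin{itemize}
\item When $|J_0|\le 7$, the condition ``no two disjoint $P_4$/$C_5$'' is vacuous and $\Delta\le 6$ is automatic.
\item Lemma~\ref{lem:1planar} applied to $K_2\vee J_0$ gives only $e(J_0)\le 2|J_0|-1$, which is $13$ for $|J_0|=7$.
\item Lemma~\ref{lem:K37} gives nothing beyond $\Delta(H)\le 6$ here.
\item A longest path $P$ with $P_4$-free remainder yields only $e(J_0)\le |J_0|+5|V(P)|$.
\end{itemize}
The missing ingredient is structural information about the drawing. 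The paper fixes a crossing-minimal drawing of $K_2\vee H$ in which every component of $H$ is a consecutive block $q_1,\dots,q_m$ around $z$. Absent crossings among the spokes, an edge $q_iq_j$ with $|i-j|\ge 3$ would have to cross at least two of the paths $zq_kw$. Analysing the spoke crossings (twin pairs, Fact~\ref{fact:Q-twin-pair}) shows $Q\subseteq P_m^2$, apart from at most the two edges $q_1q_4$ and $q_{m-3}q_m$. From this the paper gets three consequences:
\begin{itemize}
\item $m\le 15$, since a $q_1$--$q_m$ path has index jumps at most $2$ (except at most twice) and $H$ is $P_8$-free;
\item $e(Q)\le \eta(m-7)+11$ for $m\ge 8$, by splitting $Q$ at the last vertex of a leftmost $P_4$, where at most five edges meet the split;
\item $Q\subseteq P_7^2+q_1q_4+q_4q_7-q_3q_5\cong B_7$ when $m=7$.
\end{itemize}

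Even granted, your lemma does not yield your conclusion. Suppose $3\nmid (n-9)$. Then $B_7\cup T_{n-9}$ has $n+2$ edges. So does $J_0\cup \tfrac{n-2-|J_0|}{3}K_3$ for any admissible $J_0\neq B_7$ with $e(J_0)=|J_0|+4$ and $|J_0|\equiv n-2\pmod 3$. Your lemma does not exclude such $J_0$, so you would face a second-order comparison against an unidentified family. The paper's sharper bound avoids this. Here $\eta(h)$ is the maximum size of a $P_4$-free graph on $h$ vertices, and $e(Q)\le\eta(m-7)+11$ says that replacing $Q$ by $B_7$ plus an extremal $P_4$-free graph on the same $m$ vertices strictly gains an edge. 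That removes the residue issue entirely.

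Two further points also need repair.
\begin{itemize}
\item Your lexicographic reading of the expansion is valid only for exchanges on $O(1)$ vertices. Two choices of $H$ differing on $\Theta(n)$ vertices can have $\sum_v d_H(v)^2/\rho^3$ differing by order $n^{-1/2}$, far more than the gain $2/\rho^2$ from one extra edge. So you must first show $|J_0|=O(1)$ (as in Fact~\ref{fact:Q-size1}) and phrase every step as a local exchange (as in Fact~\ref{fact:edge-modify}).
\item For $t=2$ you use the $1$-planarity of $K_2\vee J_0$ and Lemma~\ref{lem:f-principle} before knowing $zw\in E(G)$, so adding $zw$ at the very end is circular. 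If $G=2K_1\vee H$, you only know that $2K_1\vee J_0$ is $1$-planar. The paper proves $zw\in E(G)$ first (Fact~\ref{fact:zw-adjacent}), by rerouting $zw$ along $zuw$ after deleting at most eight edges.
\end{itemize}
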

\begin{proof}
We split the proof into two major stages: first verify the adjacency of vertices $z$ and $w$, then characterize the structure of $H=G[M]$ case-by-case for $t=1$ and $t=2$.

\begin{fac}\label{fact:zw-adjacent}
$zw\in E(G)$.
\end{fac}
\begin{proof}[\bf Proof of Fact~\ref{fact:zw-adjacent}]
Suppose contrariwise that $zw\notin E(G)$. Fix a 1-planar drawing of $G$, and select an arbitrary vertex $u\in M$. Define the edge set
\[
X=\{e\in E(G)\mid e \text{ crosses } zu \text{ or } uw\}.
\]
By the defining property of 1-planar graphs (each edge is crossed at most once), we have $|X|\le 2$. Construct an auxiliary graph
\[
G_0 = G + zw - \bigl\{uv\mid v\in N_H(u)\bigr\} - X.
\]
After deleting edges in $X$, the path $zuw$ contains no crossings; upon removal of all edges of $H$ incident to $u$, the edge $zw$ can be embedded within a sufficiently thin tubular neighbourhood of $zuw$ without introducing new crossings. Consequently, $G_0$ admits a valid 1-planar embedding.

We next confirm that $G_0$ remains $tC_5$-free. Assume for contradiction that $G_0$ contains $t$ pairwise vertex-disjoint copies of $C_5$, denoted $C^1,C^2,\dots,C^t$. Since the original graph $G$ is $tC_5$-free, at least one 5-cycle (say $C^1$) must contain the newly added edge $zw$. Write $C^1=zwabcz$ with $a,b,c\in M$ and $ab,bc\in E(G_0)\subseteq E(G)$. The cyclic sequence $C^{1^\prime}=zabwcz$ forms a 5-cycle entirely contained in $G$, and $C^{1^\prime}$ is vertex-disjoint from $C^2,\dots,C^t$. Replacing $C^1$ with $C^{1^\prime}$ yields $t$ pairwise disjoint 5-cycles in $G$, which contradicts the $tC_5$-freeness of $G$. Hence $G_0$ is $tC_5$-free.

By the degree bound $d_H(u)\le 6$ and $|X|\le 2$, applying Lemma~\ref{lem:Rayleigh} yields the spectral radius difference estimate:
\begin{align*}
\rho(G_0)-\rho(G) &\ge \frac{2}{\mathbf{x}^\top \mathbf{x}} \Bigl(x_zx_w - \sum_{v\in N_H(u)}x_u x_v - \sum_{ab\in X}x_a x_b\Bigr)\\
&\ge \frac{2}{\mathbf{x}^\top \mathbf{x}} \Bigl(1 - \frac{24}{(\rho-6)^2} - \frac{4}{\rho-6}\Bigr) > 0.
\end{align*}
This strict inequality $\rho(G_0)>\rho(G)$ contradicts the extremality of $G$. We therefore conclude $zw\in E(G)$.
\end{proof}

By Fact~\ref{fact:zw-adjacent}, $d(z)=d(w)=n-1$ and $G=K_2\vee H$. We treat the two cases $t=1$ and $t=2$ separately.

\paragraph{Case 1: $t=1$ ($G$ is $C_5$-free)}
Since $G$ contains no 5-cycles, the subgraph $H$ must be edgeless. Indeed, suppose $uv\in E(H)$ for some pair $u,v\in M$. For any vertex $y\in M\setminus\{u,v\}$, the cyclic sequence $zuvwyz$ forms a copy of $C_5$ in $G$, a contradiction. Thus $H=I_{n-2}$ (the empty graph on $n-2$ vertices), and $G=K_2\vee I_{n-2}$.

\paragraph{Case 2: $t=2$ ($G$ is $2C_5$-free)}
We now restrict our attention to the setting $t=2$ for the remainder of the proof, and establish a series of structural facts governing $H$.

\begin{fac}\label{fact:edge-modify}
Let $\ell$ be an integer with $0\le \ell\le 30$. Let $H'$ be obtained from $H$ by deleting exactly $\ell$ edges and adding more than $\ell$ edges within the vertex set of $H$. Then $\rho(K_2\vee H')>\rho(K_2\vee H)$.
\end{fac}
\begin{proof}[\bf Proof of Fact~\ref{fact:edge-modify}]
Denote by $E^-$ the set of edges deleted from $H$ and by $E^+$ the set of edges added to $H$, so that $|E^-|=\ell$ and $|E^+|\ge \ell+1$ by hypothesis. Recall the uniform lower and upper bounds $\frac{2}{\rho}\le x_u\le \frac{2}{\rho-6}$ for all $u\in M$, together with the bound $\rho=\rho(K_2\vee H)>\sqrt{2n}$ valid for $n\ge 10^{19}$. Invoking Lemma \ref{lem:Rayleigh} yields
\begin{align*}
\rho(K_2\vee H')-\rho(K_2\vee H)
&\ge \frac{2}{\mathbf{x}^\top \mathbf{x}} \Bigl(\sum_{uv\in E^+}x_u x_v - \sum_{uv\in E^-}x_u x_v\Bigr)\\
&\ge \frac{2}{\mathbf{x}^\top \mathbf{x}} \Bigl((\ell+1)\cdot \frac{4}{\rho^2} - \ell\cdot \frac{4}{(\rho-6)^2}\Bigr) > 0.
\end{align*}
Strict positivity yields $\rho(K_2\vee H')>\rho(K_2\vee H)$, completing the argument.
\end{proof}

\begin{fac}\label{fact:H-forbidden}
The subgraph $H$ is both $2P_4$-free and $C_5$-free.
\end{fac}
\begin{proof}[\bf Proof of Fact~\ref{fact:H-forbidden}]
We first prove $H\nsupseteq 2P_4$. Assume contrariwise that $H$ contains two vertex-disjoint copies of $P_4$, written $P=a_1a_2a_3a_4$ and $P'=b_1b_2b_3b_4$. The sequences $za_1a_2a_3a_4z$ and $wb_1b_2b_3b_4w$ form two vertex-disjoint 5-cycles in $G$, contradicting the $2C_5$-freeness of $G$. Hence $H$ is $2P_4$-free.

Next we verify $H\nsupseteq C_5$. Suppose $H$ admits a 5-cycle $C$, and set $R=M\setminus V(C)$. If $H[R]$ contains an edge $ab$ and a vertex $c\in R\setminus\{a,b\}$, then $zabwcz$ is a $C_5$ vertex-disjoint from $C$, again violating $2C_5$-freeness. It follows that $H[R]$ is edgeless, i.e., $e(H[R])=0$. By $\Delta(H)\le 6$, the number of edges incident to vertices of $C$ is bounded above by $25$, so $e(H)\le 25$.
Let $F_0=\left\lfloor\frac{n-2}{3}\right\rfloor K_3\cup K_{n-2-3\left\lfloor\frac{n-2}{3}\right\rfloor}$. Then $e(F_0)\ge n-4$. Define $G'=K_2\vee F_0$; this graph is 1-planar and $2C_5$-free by construction. Applying Fact~\ref{fact:edge-modify} gives $\rho(G')>\rho(G)$, which contradicts the spectral maximality of $G$. We thus conclude that $H$ contains no 5-cycles.
\end{proof}

By Fact~\ref{fact:H-forbidden}, at most one connected component of $H$ contains a $P_4$. If such a component exists, we label it $Q$ and proceed to characterize the $P_4$-free remainder of $H$.

\begin{fac}\label{fact:P4-free-conn}
Let $F$ be a connected $P_4$-free graph. Then $F$ is isomorphic either to a star graph or to the triangle $K_3$.
\end{fac}
\begin{proof}[\bf Proof of Fact~\ref{fact:P4-free-conn}]
If $F$ is a tree, its diameter is at most $2$ (otherwise $F$ would contain a $P_4$), so $F$ is a star (with isolated vertices and single edges also regarded as stars). If $F$ contains a cycle, the cycle must be a triangle; no additional vertices can be attached to this triangle, as any pendant vertex would immediately create a $P_4$. Therefore $F\cong K_3$.
\end{proof}
\begin{fac}\label{fact:Q-size1}
If the component $Q~($containing a $P_4)$ exists, then $|V(Q)|\le 130$.
\end{fac}
\begin{proof}[\bf Proof of Fact~\ref{fact:Q-size1}]
Fix a path $P=p_1p_2p_3p_4\subseteq Q$. Since $H$ is $2P_4$-free, the subgraph $Q-V(P)$ is $P_4$-free. By Fact~\ref{fact:P4-free-conn}, every connected component of $Q-V(P)$ is a star or a triangle.

The bound $\Delta(H)\le 6$ implies each component of $Q-V(P)$ has at most $7$ vertices. Moreover, connectedness of $Q$ ensures every component sends at least one edge to $V(P)$. Therefore, the number of components of $Q- V(P)$ satisfies
\[
\left|\mathcal{C}(Q- V(P))\right|\le e_Q\bigl(V(P),V(Q)\setminus V(P)\bigr) \le \sum_{i=1}^4 \bigl(d_H(p_i)-d_P(p_i)\bigr)\le 18.
\]
We conclude the total number of vertices in $Q$ satisfies $|V(Q)|\le 4 + 18\cdot 7 = 130.$
\end{proof}

\begin{fac}\label{fact:R-canonical}
Let $R$ be the union of all $P_4$-free connected components of $H$, and set $s=|V(R)|$. Then $R\cong T_s,$ where $T_s$ is defined in \eqref{e1.1}.
\end{fac}
\begin{proof}[\bf Proof of Fact~\ref{fact:R-canonical}]
If $Q$ does not exist, then $s=n-2$; if $Q$ exists, Fact~\ref{fact:Q-size1} gives $|V(Q)|\le 130$. So $s\ge 10^{18}$ whenever $n\ge 10^{19}$.
By Fact~\ref{fact:P4-free-conn}, each connected component of $R$ is a star or triangle. 
We first show $R$ contains at most one star component. Suppose two distinct star components $K_{1,a-1},K_{1,b-1}$ coexist in $R$. Replacing $K_{1,a-1} \cup K_{1,b-1}$ by $\lfloor\frac{a+b}{3}\rfloor K_3\cup K_{a+b-3\lfloor\frac{a+b}{3}\rfloor}$ deletes at most $12$ edges and adds at least one more edge. This transformation preserves 1-planarity and $2C_5$-freeness, so Fact~\ref{fact:edge-modify} yields a spectral radius increase, a contradiction. Thus $R$ admits at most one star component.

Recall the resolvent function $f_J(\rho)=\mathbf{1}^\top (\rho I-A(J))^{-1}\mathbf{1}$ for a graph $J$, where $\mathbf{1}$ denotes the all-ones vector. Direct matrix inversion for $K_3$ gives
\[
f_{K_3}(\rho)=\frac{3}{\rho-2}.
\]
For the star graph $K_{1,a-1}$, partition the resolvent matrix as
$$
\rho I-A(K_{1,a-1})=\begin{pmatrix}\rho & -\mathbf 1_{a-1}^\top\\ -\mathbf 1_{a-1} & \rho I_{a-1}\end{pmatrix}.
$$
Define the Schur complement by eliminating the leaf block:
$$
S = \rho - \mathbf 1_{a-1}^\top (\rho I_{a-1})^{-1}\mathbf 1_{a-1}
= \rho - \frac{a-1}{\rho} = \frac{\rho^2-(a-1)}{\rho},\qquad S^{-1}=\frac{\rho}{\rho^2-(a-1)}.
$$
By the block matrix inversion formula, the inverse resolvent is
$$
(\rho I-A(K_{1,a-1}))^{-1}=
\begin{pmatrix}
S^{-1} & \dfrac{S^{-1}}{\rho}\mathbf 1_{a-1}^\top\\[4pt]
\dfrac{S^{-1}}{\rho}\mathbf 1_{a-1} & \dfrac{1}{\rho}I_{a-1}+\dfrac{S^{-1}}{\rho^2}\mathbf 1_{a-1}\mathbf 1_{a-1}^\top
\end{pmatrix}.
$$
Let $\mathbf 1_a=\binom{1}{\mathbf 1_{a-1}}$ be the all-ones vector of length $a$. Therefore,
$$
f_{K_{1,a-1}}(\rho) = \mathbf 1_a^\top (\rho I-A(K_{1,a-1}))^{-1}\mathbf 1_a
=S^{-1} + \frac{2(a-1)S^{-1}}{\rho} + \frac{a-1}{\rho} + \frac{(a-1)^2 S^{-1}}{\rho^2}
=\frac{a\rho+2(a-1)}{\rho^2-(a-1)}.
$$

We classify the exceptional star component by the residue of $s$ modulo $3$:
\begin{enumerate}[label=\textup{(\roman*)}]
\item $s\equiv 0\pmod{3}$: The possible star component is $K_{1,2}$ or $K_{1,5}$. Replacing $K_{1,2}$ by $K_3$, or $K_{1,5}$ by $2K_3$, deletes at most $5$ edges and adds one more edge. Since the replacement preserves $1$-planarity and $2C_5$-freeness, the spectral radius strictly increases by Fact~\ref{fact:edge-modify}. Hence no star component appears, so $T_s=\tfrac{s}{3}K_3$.
\item $s\equiv 2\pmod{3}$: The possible star component is $K_{1,4}$ or $K_{1,1}$. Suppose that $G=K_2\vee (Q\cup \tfrac{s-2}{3}K_3\cup K_{1,1})$ (if $Q$ exists). The graph $K_2\vee (Q\cup \tfrac{s-5}{3}K_3\cup K_{1,4})$ is $1$-planar and $2C_5$-free. Moreover, 
    $$
    f_{K_{1,4}}(\rho)-(f_{K_3}(\rho)+f_{K_{1,1}}(\rho))=\frac{6}{(\rho-2)(\rho-1)(\rho+2)}>0.
    $$
    By Lemma~\ref{lem:f-principle}, $\rho(K_2\vee (Q\cup \tfrac{s-5}{3}K_3\cup K_{1,4})) > \rho(G)$, a contradiction. Hence, the star component must be $K_{1,4}$, so $T_s=\tfrac{s-5}{3}K_3\cup K_{1,4}$.
\item $s\equiv 1\pmod{3}$: The possible star component is $K_{1,6}$, $K_{1,3}$, or $K_1$. Suppose that $G=K_2\vee (Q\cup \tfrac{s-1}{3}K_3\cup K_1)$ (if $Q$ exists). The graph $K_2\vee (Q\cup \tfrac{s-7}{3}K_3\cup K_{1,6})$ is $1$-planar and $2C_5$-free. Since $f_{K_{1,6}}(\rho)>2f_{K_3}(\rho)+f_{K_1}(\rho)$, Lemma~\ref{lem:f-principle} gives $\rho(K_2\vee (Q\cup \tfrac{s-7}{3}K_3\cup K_{1,6})) > \rho(G)$, a contradiction.
    Suppose that $G=K_2\vee (Q\cup \tfrac{s-4}{3}K_3\cup K_{1,3})$ (if $Q$ exists). Since $f_{K_{1,6}}(\rho)>f_{K_3}(\rho)+f_{K_{1,3}}(\rho)$, Lemma~\ref{lem:f-principle} gives $\rho(K_2\vee (Q\cup \tfrac{s-7}{3}K_3\cup K_{1,6})) > \rho(G)$, a contradiction.
    Hence, the star component must be $K_{1,6}$, so $T_s=\tfrac{s-7}{3}K_3\cup K_{1,6}$.
\end{enumerate}
This confirms $R\cong T_s$.
\end{proof}

Recall the graph $B_7=K_1\vee 2K_3$ (two copies of $K_4$ intersecting at a single common vertex), which has $7$ vertices and $12$ edges. 
For example, the 1-planar graph $K_2\vee (B_7\cup K_3)$ is depicted in Figure~\ref{fig:B7-embedding}. We will use it repeatedly in the sequel.
\begin{figure}[htbp]
\centering
\begin{tikzpicture}[
    dot/.style={circle,fill,inner sep=1.6pt},
    scale=1.7,
    every edge/.style={line width=1pt}
]
\coordinate (qm_1)  at (0.40, 0.22);
\coordinate (qm)    at (0.40, -0.22);
\coordinate (z2)    at (0.40, 1.1);
\coordinate (w2)    at (0.40, -1.1);

\coordinate (qm_2)  at (-0.02, 0);
\coordinate (q4)    at (-0.50, 0);
\coordinate (q3)    at (-1.00, 0);
\coordinate (q2)    at (-1.40, 0.22);
\coordinate (q1)    at (-1.40, -0.22);

\coordinate (s1) at (1.60, 0);
\coordinate (s2) at (2.00, 0);
\coordinate (s3) at (2.40, 0);

\draw (z2)--(qm_2) (w2)--(qm_2);
\draw (z2)--(q4)   (w2)--(q4);
\draw (z2)--(q3)   (w2)--(q3);
\draw (z2)--(s1) (w2)--(s1);
\draw (z2)--(s2) (w2)--(s2);
\draw (z2)--(s3) (w2)--(s3);

\draw (q1)--(q2) (q1)--(q3);
\draw (q1)--(q4) (q2)--(q3);
\draw (q2)--(q4) (q3)--(q4);
\draw (q4)--(qm_2) (q4)--(qm_1);
\draw (q4)--(qm) (qm_2)--(qm_1);
\draw (qm_2)--(qm) (qm_1)--(qm);
\draw (s1)--(s2) (s2)--(s3);
\draw (s1) to[out=315, in=210, looseness=0.85] (s3);

\draw (z2)--(qm_1);
\draw (w2)--(qm);
\draw (z2) to[out=300, in=45, looseness=0.5] (qm);
\draw (w2) to[out=60, in=315, looseness=0.5] (qm_1);

\draw (z2) to[out=310, in=50, looseness=0.9] (w2);

\draw (z2)--(q2);
\draw (w2)--(q1);

\draw (z2) to[out=195, in=145, looseness=1] (q1);
\draw (w2) to[out=165, in=210, looseness=1] (q2);

\foreach \p in {z2,w2,qm_1,qm,qm_2,q4,q3,q2,q1,s1,s2,s3}{
    \node[dot] at (\p) {};
}

\node[above=3pt] at (z2) {$z$}; \node[below=3pt] at (w2) {$w$}; 
\end{tikzpicture}
\caption{The 1-planar graph $K_2\vee (B_7 \cup K_3)$}
\label{fig:B7-embedding}
\end{figure}

We now prove that $H$ necessarily contains a $P_4$.
\begin{fac}\label{fact:H-contains-P4}
$H$ admits a $P_4$.
\end{fac}
\begin{proof}[\bf Proof of Fact~\ref{fact:H-contains-P4}]
Suppose for contradiction that $H$ is $P_4$-free, so $H\cong T_{n-2}$ by Fact~\ref{fact:R-canonical}. We perform edge rewrites based on $n-2\pmod{3}$:
\begin{itemize}
\item If $n-2\equiv 0\pmod{3}$: $H=\tfrac{n-2}{3}K_3$; delete 9 edges from three disjoint triangles, add the 13 edges of $B_7\cup K_2$.
\item If $n-2\equiv 1\pmod{3}$: $H$ contains a unique $K_{1,6}$ component; delete 6 pendant edges of $K_{1,6}$ and add $E(B_7)$.
\item If $n-2\equiv 2\pmod{3}$: $H$ consists of $K_{1,4}$ and disjoint triangles; delete 7 edges of $K_{1,4} \cup K_3$ and add $E(B_7\cup K_1)$.
\end{itemize}
In all cases, the modified graph $H'$ satisfies the hypotheses of Fact~\ref{fact:edge-modify}: at most 30 edges are deleted, strictly more edges are added, and $K_2\vee H'$ remains 1-planar and $2C_5$-free. Thus $\rho(K_2\vee H')>\rho(G)$, contradicting extremality. We therefore deduce $H\supseteq P_4$.
\end{proof}

Let $Q$ denote the unique connected component of $H$ containing a $P_4$; all remaining components of $H-V(Q)$ are $P_4$-free, so Fact~\ref{fact:R-canonical} gives $H-V(Q)\cong T_{n-2-|V(Q)|}$. We next analyze the structural constraints on $Q$ imposed by the 1-planar embedding of $G=K_2\vee H$.

The join graph $K_2\vee H$ admits a 1-planar drawing such that vertices of each connected component of $H$ appear in consecutive cyclic order around vertex $z$. Among all such valid embeddings, we select the one minimizing the total number of edge crossings. Let $m = |V(Q)|$, and label vertices of $Q$ as $q_1,q_2,\dots,q_m$ consistent with their cyclic ordering around $z$. We introduce the key twin pair definition:
\begin{defi}
Two vertices $a,b\in V(H)$ form a \textit{twin pair} if some edge from $\{za, wa\}$ crosses an edge from $\{zb, wb\}$ in the fixed 1-planar drawing.
\end{defi}

\begin{fac}\label{fact:Q-twin-pair}
Assume $m=|V(Q)|\ge 7$. 
\begin{enumerate}[label=\textup{(\roman*)}]
\item If $Q$ contains no twin pairs, then $Q\subseteq P_m^2$;
\item If $Q$ contains exactly one twin pair, then after reversing vertex ordering if necessary, $Q\subseteq P_m^2 + q_1q_4 - q_3q_5$;
\item If $Q$ contains two twin pairs, then these are precisely $\{q_1,q_2\}$ and $\{q_{m-1},q_m\}$, and
$Q\subseteq P_m^2 + q_1q_4 + q_{m-3}q_m - q_3q_5 - q_{m-4}q_{m-2}.$
\end{enumerate}
\end{fac}
\begin{proof}[\bf Proof of Fact~\ref{fact:Q-twin-pair}]
Let $\{a,b\}\subseteq V(Q)$ be a twin pair, and suppose $zb$ crosses $wa$ at a point $o$. Then $a$ and $b$ are consecutive in the cyclic ordering of neighbors around $z$.
Indeed, suppose some vertex $u$ lies strictly between $a$ and $b$ in this ordering. Then $u$ lies in one of the two regions separated by the closed curves $zoaz$ and $wobw$. 
Without loss of generality, suppose $u$ lies inside the closed curve $zoaz$.
We now consider edge $uw$. 
Since edges $zb$ and $wa$ already cross at $o$, $uw$ must cross $za$, as depicted in Figure~\ref{fig:twin-row}(a) (the case where $uw$ does not wind around $z$; the other case is analogous). Hence no edge joins $\{a,b\}$ to $V(Q)\setminus\{a,b\}$, contradicting the fact that $Q$ is connected.
\begin{figure}[htbp]
\centering
\begin{tikzpicture}[
    dot/.style={circle,fill,inner sep=1.6pt},
    scale=1.55,
    every edge/.style={line width=1pt}
]

\begin{scope}[xshift=-3.0cm]
\coordinate (z)    at (-5.0, 0);
\coordinate (w)    at (-3.0, 0);
\coordinate (qim1) at (-4.0, -0.15);
\coordinate (b)    at (-3.58, 0.40);
\coordinate (a)    at (-4.42, 0.40);
\coordinate (qip2) at (-4.0, 1.0);
\coordinate (o)    at (-4.0, 0.12);
\coordinate (u)    at (-4.5, 0.26);

\draw (z)--(a)    (w)--(a);
\draw (z)--(b)    (w)--(b);
\draw (z)--(qim1) (w)--(qim1);
\draw (z)--(b);
\draw (w)--(a);
\draw (w) to[out=440, in=490, looseness=1] (u);
\draw (w) to[out=440, in=-30, looseness=1] (qip2);
\draw (z) to[out=70, in=210, looseness=1] (qip2);

\foreach \p in {z,w,qim1,b,a,qip2,u}{ \node[dot] at (\p) {}; }
\node[left=4pt]  at (z)    {$z$};
\node[right=4pt] at (w)    {$w$};
\node[above=1pt, xshift=-6pt]  at (b) {$b$};
\node[above=1pt, xshift=6pt]  at (a) {$a$};
\node at (o) {$o$};
\node[above left=1pt] at (u) {$u$};
\node[below=10pt] at (-4.0, -0.8) {(a) Non-consecutive};
\end{scope}

\begin{scope}[xshift=0cm]
\coordinate (z1) at (-5.0, 0);
\coordinate (w1) at (-3.0, 0);
\coordinate (qim1) at (-4.0, -0.15);
\coordinate (qi)   at (-3.58, 0.40);
\coordinate (qip1) at (-4.42, 0.40);
\coordinate (qip2) at (-4.0, 1.0);
\coordinate (opos) at (-4.0, 0.12);

\draw (z1)--(qip2) (w1)--(qip2);
\draw (z1)--(qip1) (w1)--(qip1);
\draw (z1)--(qi) (w1)--(qi);
\draw (z1)--(qim1) (w1)--(qim1);
\draw (z1)--(qi) (w1)--(qip1);

\node[font=\boldmath] at (-4.0,1.4) {$\vdots$};
\node[font=\boldmath] at (-4.0,-0.5) {$\vdots$};

\foreach \p in {z1,w1,qim1,qi,qip1,qip2}{ \node[dot] at (\p) {}; }
\node[left=4pt] at (z1) {$z$};
\node[right=4pt] at (w1) {$w$};
\node[below left=3pt] at (qim1) {$q_{i-1}$};
\node[above left=3pt] at (qip2) {$q_{i+2}$};
\node[above=-1pt, xshift=-7pt] at (qi) {$q_i$};
\node[above=-1pt, xshift=12pt] at (qip1) {$q_{i+1}$};
\node[below=10pt] at (-4.0, -0.8) {(b) Internal twin pair};
\end{scope}

\begin{scope}[xshift=0.0cm]
\coordinate (z2) at (-1.8, 0);
\coordinate (w2) at (1.8, 0);
\coordinate (qm_1) at (-0.12, 1.05);
\coordinate (qm)   at (0.12, 1.05);
\coordinate (qm_2) at (0, 0.68);
\coordinate (qm_3) at (0, 0.35);
\coordinate (q4)   at (0, -0.18);
\coordinate (q3)   at (0, -0.45);
\coordinate (q2) at (-0.12, -0.70);
\coordinate (q1) at (0.12, -0.70);

\node[font=\boldmath] at (0, 0.15) {$\vdots$};

\draw (z2)--(qm_2) (w2)--(qm_2);
\draw (z2)--(qm_3) (w2)--(qm_3);
\draw (z2)--(q4) (w2)--(q4);
\draw (z2)--(q3) (w2)--(q3);

\draw (z2) to[out=55,in=125,looseness=0.95] (qm);
\draw (w2) to[out=125,in=55,looseness=0.95] (qm_1);
\draw (z2)--(qm_1) (w2)--(qm);

\draw (z2) to[out=-55,in=-125,looseness=0.95] (q1);
\draw (w2) to[out=-125,in=-55,looseness=0.95] (q2);
\draw (z2)--(q2) (w2)--(q1);

\foreach \p in {z2,w2,qm_1,qm,qm_2,qm_3,q4,q3,q2,q1}{ \node[dot] at (\p) {}; }
\node[left=3pt]  at (z2) {$z$};
\node[right=3pt] at (w2) {$w$};
\node[left=0.1pt] at (qm_1) {$q_{m-1}$};
\node[right=1pt] at (qm)   {$q_m$};
\node[above=0.1pt] at (qm_2) {$q_{m-2}$};
\node[above=-1pt] at (qm_3) {$q_{m-3}$};
\node[below=-1pt] at (q4)   {$q_4$};
\node[below=-1pt] at (q3)   {$q_3$};
\node[left=7pt,below=-1pt] at (q2)   {$q_2$};
\node[right=7pt,below=-1pt] at (q1)   {$q_1$};
\node[below=10pt] at (0, -0.8) {(c) Valid terminal twin pairs};
\end{scope}

\end{tikzpicture}
\caption{Illustration of twin pairs in connected component $Q$.}
\label{fig:twin-row}
\end{figure}

Internal twin pairs $\{q_i,q_{i+1}\}$ with $2\le i\le m-2$ are forbidden; otherwise, set $A=\{q_1,\dots,q_{i-1}\}$ and $B=\{q_{i+2},\dots,q_m\}.$
Both $A$ and $B$ are nonempty. By definition of a twin pair, either $zq_i$ crosses $wq_{i+1}$, or $wq_i$ crosses $zq_{i+1}$; assume the former happens, as depicted in Figure~\ref{fig:twin-row}(b).
Since $zq_i$ crosses $wq_{i+1}$, we have $e_Q(A,B\cup\{q_{i},q_{i+1}\})=0$, contradicting the connectedness of $Q$.
Thus, only terminal twin pairs $\{q_1,q_2\},\{q_{m-1},q_m\}$ are permissible; see Figure~\ref{fig:twin-row}(c).

Suppose now that $\{q_1,q_2\}$ is a twin pair. By minimality of the total crossing number, the local configuration forces $q_1q_4,q_2q_4,q_3q_4\in E(Q)$.
Indeed, if $q_1q_4\notin E(Q)$, we can locally redraw the subgraph on $\{z,w,q_1,q_2,q_3,q_4\}$ to eliminate the crossing caused by the twin pair $\{q_1,q_2\}$, which reduces the total number of crossings.
If $q_1q_4\in E(Q)$ and $q_2q_4\notin E(Q)$, locally rearranging the vertices in the order $q_2,q_1,q_3,q_4$ reduces the total number of crossings.
Since $q_1q_4,q_2q_4\in E(Q)$, $q_3q_5\notin E(Q)$; otherwise, $q_3q_5$ would be crossed at least twice, violating $1$-planarity.
If $q_1q_4,q_2q_4\in E(Q)$ and $q_3q_4\notin E(Q)$, rearranging them as $q_3,q_1,q_2,q_4$ also reduces the total number of crossings.
By symmetry, if $\{q_{m-1},q_m\}$ is a twin pair, then $q_{m-3}q_{m-2},\ q_{m-3}q_{m-1},\ q_{m-3}q_m\in E(Q)$ and $q_{m-4}q_{m-2}\notin E(Q)$.
Apart from the two exceptional edges $q_1q_4$ and $q_{m-3}q_m$, every other edge $q_iq_j\in E(Q)$ satisfies $|i-j|\le 2$.
This proves (i)-(iii).
\end{proof}

\begin{fac}\label{fact:Q-size2}
$|V(Q)|\le 7$.
\end{fac}
\begin{proof}[\bf Proof of Fact~\ref{fact:Q-size2}]
Suppose for contradiction that $m=|V(Q)|\ge 8$. Let $k\in\{0,1,2\}$ denote the number of twin pairs in $Q$, and define the bounding supergraph $R_m^{(k)}$ as specified by Fact~\ref{fact:Q-twin-pair}.

Since $Q$ is connected, it contains a simple path $v_0v_1\ldots v_\ell$ with $v_0=q_1, v_\ell=q_m$ and $v_j=q_{i_j}$ for $0\leq j\leq \ell$, where $i_0=1$ and $i_{\ell}=m$. In $R_m^{(k)}$, the $k$ exceptional edges change the index difference by $3$, while every edge changes the index difference by at most $2$. Since the path is simple, each exceptional edge is used at most once. Hence $m-1\le \sum_{j=1}^{\ell}|i_j-i_{j-1}|\le 2\ell+k.$
Thus $\ell\ge \lceil \frac{m-1-k}{2}\rceil$. If $m\ge 14+k$, then $\ell\ge 7$, so the path contains a copy of $P_8$, contradicting Fact~\ref{fact:H-forbidden}. Therefore $m\le 13+k\le 15.$

For $h\ge 0$, define
$$
\eta(h):=
\begin{cases}
h, & h\equiv0\pmod3,\\
h-1, & h\equiv1,2\pmod3.
\end{cases}
$$
The function $\eta$ satisfies $\eta(a)+\eta(b)\le \eta(a+b)$ for all integers $a,b\ge 0$. 
Let $P$ be a copy of $P_4$ in $Q$, and let $p$ be the minimum possible value of $\max\{i:q_i\in V(P)\}$ over all such copies $P$. Thus $p\in\{4,\dots,m\}$. By Fact~\ref{fact:H-forbidden}, $Q^-:=Q[\{q_1,\ldots,q_{p-1}\}]$ and $Q^+:=Q[\{q_{p+1},\ldots,q_m\}]$ are $P_4$-free. By Fact~\ref{fact:P4-free-conn}, every $P_4$-free graph on $h$ vertices has at most $\eta(h)$ edges. Hence, $e(Q^-)\le \eta(p-1)$ and $e(Q^+)\le \eta(m-p)$.

Let $Z_p^{(k)}$ be the set of edges of $R_m^{(k)}$ which are contained in neither $R_m^{(k)}[\{q_1,\ldots,q_{p-1}\}]$ nor $R_m^{(k)}[\{q_{p+1},\ldots,q_m\}]$. Equivalently, $Z_p^{(k)}$ consists of edges incident to $q_p$ and edges between $V(Q^-)$ and $V(Q^+)$. By the structure of $R_m^{(k)}$, we have $|Z_p^{(k)}|\le5$, where $k\in\{0,1,2\}$. Every edge of $Q$ lies in $Q^-$, $Q^+$, or $Z_p^{(k)}$. Therefore 
$$
e(Q)\le \eta(p-1)+\eta(m-p)+5\le \eta(m-1)+5=\eta(m-7)+11.
$$

Let $F_m:=B_7\cup U_{m-7}$, where $U_{m-7}=\lfloor\frac{m-7}{3}\rfloor K_3\cup K_{m-7-3\lfloor\frac{m-7}{3}\rfloor}$ is an edge-maximal $P_4$-free 1-planar graph on $m-7$ vertices. Then $e(U_{m-7})=\eta(m-7)$, and so $e(F_m)=e(B_7)+\eta(m-7)\ge e(Q)+1.$
Setting $H'=(H-V(Q))\cup F_m$, the graph $K_2\vee H'$ is 1-planar and $2C_5$-free. Moreover, $e(Q)\le e(R_m^{(k)})\leq2m-2\le28<30.$ Thus $H'$ is obtained from $H$ by deleting at most $30$ edges and adding strictly more edges. By Fact~\ref{fact:edge-modify}, $\rho(K_2\vee H')>\rho(K_2\vee H)=\rho(G),$ contradicting the extremality of $G$. Therefore $m\ge 8$ is impossible, so $|V(Q)|\le 7$.
\end{proof}

\begin{fac}\label{fact:Q-B7}
$Q\cong B_7$.
\end{fac}
\begin{proof}[\bf Proof of Fact~\ref{fact:Q-B7}]
By Fact~\ref{fact:Q-size2} and the existence of $P_4\subseteq Q$, we have $4\le m\le7$.
We first exclude $4\le m\le6$. Since $Q$ is $C_5$-free, we have $e(Q)\le 6$ if $m=4$, $e(Q)\le 7$ if $m=5$ and $e(Q)\le 9$ if $m=6$, as illustrated in Figure~\ref{fig:extremal-C5-free}. 
\begin{figure}[h]
\centering
\begin{tikzpicture}[
    inner sep=0.5mm,
    place/.style={circle,draw=black,fill=black},
    r/.style={circle,draw=red,fill=red}]

\begin{scope}[xshift=0cm]
\node (p0) at (-0.7,-0.6)   [place] {};
\node (p1) at (-0.7,0.6)    [place] {};
\node (p2) at (0.1,  0)     [place] {};
\node (share) at (1.1,  0)  [place] {};

\draw[black, line width=0.7] 
(p0)--(p1) (p1)--(p2) (p2)--(p0)
(p0)--(share) (p1)--(share) (p2)--(share);

\node[below=4pt] at (0.2,-1.2) {$m=4$};
\end{scope}

\begin{scope}[xshift=3.6cm]
\node (p0) at (-0.7,-0.6)   [place] {};
\node (p1) at (-0.7,0.6)    [place] {};
\node (p2) at (0.1,  0)     [place] {};
\node (share) at (1.1,  0)  [place] {};
\node (leaf) at (1.9, 0)    [place] {};

\draw[black, line width=0.7] 
(p0)--(p1) (p1)--(p2) (p2)--(p0)
(p0)--(share) (p1)--(share) (p2)--(share)
(share)--(leaf);

\node[below=4pt] at (0.2,-1.2) {$m=5$};
\end{scope}

\begin{scope}[xshift=7.2cm]
\node (k0) at (-0.7,-0.6)   [place] {};
\node (k1) at (-0.7,0.6)    [place] {};
\node (k2) at (0.1,  0)     [place] {};
\node (share) at (1.1,  0)  [place] {};

\draw[black, line width=0.7] 
(k0)--(k1) (k1)--(k2) (k2)--(k0)
(k0)--(share) (k1)--(share) (k2)--(share);

\node (u1) at (1.8, 0.6)  [place] {};
\node (u2) at (1.8,-0.6)  [place] {};
\draw[black, line width=0.7] (share)--(u1) (share)--(u2) (u1)--(u2);

\node[below=4pt] at (0.3,-1.2) {$m=6$};
\end{scope}
\end{tikzpicture}
\caption{Edge-maximal $C_5$-free graphs of order $4,5,6$}
\label{fig:extremal-C5-free}
\end{figure}
Note that $H-V(Q)$ contains a triangle $C_3$. Then one may easily see the upper bound on $e(Q\cup C_3).$ 
Replace $Q\cup C_3$, respectively, by $B_7$, $B_7\cup K_1$, and $B_7\cup K_2$, one may see $e(Q\cup C_3)< e(B_7), e(B_7\cup K_1), e(B_7\cup K_2)$. 
Furthermore, by Fact~\ref{fact:edge-modify} one sees $\rho(K_2\vee (Q\cup T_{n-6}))< \rho(K_2\vee (B_7\cup T_{n-9}) )$ for $m=4$,
$\rho(K_2\vee (Q\cup T_{n-7}))<\rho(K_2\vee (B_7\cup K_1 \cup T_{n-10}))$ for $m=5$, and
$\rho(K_2\vee (Q\cup T_{n-8}))< \rho(K_2\vee (B_7\cup K_2\cup T_{n-11}))$ for $m=6$, each of which contradicts the extremality of $G$. Therefore $m\notin\{4,5,6\}.$ Consequently, $m=7.$

It remains to show that $Q\cong B_7$. Let $k\in\{0,1,2\}$ be the number of twin pairs in $Q$. By Fact~\ref{fact:Q-twin-pair}, after possibly reversing the ordering $q_1,\ldots,q_7$, we have $Q\subseteq P_7^2$ if $k=0$, $Q\subseteq P_7^2+q_1q_4-q_3q_5$ if $k=1$ and $Q\subseteq P_7^2+q_1q_4+q_4q_7-q_3q_5$ if $k=2$.  
Since $e(P_7^2)=11,$ the cases $k=0$ and $k=1$ give $e(Q)\le11<12=e(B_7).$ Thus in either case replacing $Q$ by $B_7$ strictly increases the number of edges, and hence increases the spectral radius by Fact~\ref{fact:edge-modify}, a contradiction. Therefore, we consider the final subcase $k=2$. In this subcase $Q\subseteq P_7^2+q_1q_4+q_4q_7-q_3q_5 \cong B_7$. If $Q\ne B_7$, then $e(Q)\le11$, and replacing $Q$ by $B_7$ gives the same contradiction as above. Therefore $Q\cong B_7.$
\end{proof}

Combining Facts \ref{fact:R-canonical} and \ref{fact:Q-B7} gives us the global structural decomposition
$H\cong B_7\cup T_{n-9},$ which finally gives $G=K_2\vee (B_7\cup T_{n-9})$ for $t=2$. This completes the full proof of Theorem~\ref{t5.1}.
\end{proof}

\section{\normalsize Concluding remarks}

In this paper, we determine the spectral extremal graphs among $K_5$-free $1$-planar graphs and establish a
$K_{2,n-2}$-reduction theorem for the forbidden subgraphs considered in this work.
As an application, we characterize the spectral extremal 1-planar graphs for the $C_5$-free or $2C_5$-free cases. 
We believe that the structural approach developed here may also be useful for studying spectral Tur\'{a}n type problems involving other forbidden subgraphs in $1$-planar graphs. This motivates the following general problem.

\begin{pb}
For sufficiently large $n$, let $t\ge3$ be a fixed integer. Determine the spectral extremal graphs among $n$-vertex $1$-planar graphs without $t$ vertex-disjoint copies of $C_5$.
\end{pb}

For cycle lengths $\ell\ge6$, the interplay between the two dominating vertices and long cycles becomes more intricate. After the $K_{2,n-2}$-reduction, copies of $C_\ell$ can be generated by long paths and complex local substructures within $H$. This leads to our second open problem:

\begin{pb}
For sufficiently large $n$, let $t\ge1$ and $\ell\ge6$ be fixed integers. Classify the spectral extremal graphs among $n$-vertex $1$-planar graphs containing no $t$ vertex-disjoint copies of $C_\ell$.
\end{pb}

It would be interesting to identify the optimal component structures and build a unified framework to resolve the general $tC_\ell$-free spectral Tur\'{a}n type problem for all $\ell\ge5$.

Let $F$ be a given graph. We denote by $\operatorname{SPEX}(n,F)$ the class of graphs $G$ that attain the maximum adjacency spectral radius among all $n$-vertex $F$-free graphs. Nikiforov \cite{Nikiforov2007} proved that $\operatorname{SPEX}(n,K_{r+1}) \subseteq \operatorname{EX}(n,K_{r+1})$. Establishing the conjecture of Cioab\u{a} et al. (2022) \cite{CDT2022}, Wang et al. (2023) \cite{WKX2023} proved: for any graph $F$ such that the graphs in $\operatorname{EX}(n,F)$ are Tur\'an graphs plus $O(1)$ edges, $\operatorname{SPEX}(n,F)\subseteq \operatorname{EX}(n,F)$ for sufficiently large $n$. It is therefore natural to study the relationship between $\operatorname{SPEX}(n,F)$ and $\operatorname{EX}(n,F)$: $\operatorname{SPEX}(n,F) \subseteq \operatorname{EX}(n,F), \operatorname{SPEX}(n,F) \cap \operatorname{EX}(n,F)\not=\emptyset$, or $\operatorname{SPEX}(n,F) \cap \operatorname{EX}(n,F)=\emptyset.$ 

Zhang, Huang and Dong~\cite{Zhang2026edge} obtained $e(G)\le 4n-8$ for every $K_5$-free $1$-planar graph $G$ on $n\ge 3$ vertices.
Moreover, this bound is sharp for $n=8$ and for all integers $n\ge 10$. For the $K_5$-free $1$-planar spectral extremal graphs obtained in Theorem \ref{thm:XK5}, one sees $e(2K_1 \vee C_{n-2}^2)=4n-8,\, e(2K_1 \vee C_{n-2}^{2-})=4n-9.$
When $n$ is odd, the spectral extremal graph is not edge-extremal, implying that $\operatorname{SPEX}_{\mathcal{P}_1}(n,K_5) \not\subseteq \operatorname{EX}_{\mathcal{P}_1}(n,K_5)$.

Li~\cite{L2026} informed us that her group determined the maximum number of edges in $C_5$-free $1$-planar graphs on $n$ vertices is $3n-6.$ In view of Theorem~\ref{thm:C5}, $e(K_2\vee I_{n-2})=2n-2.$ Hence, $\operatorname{SPEX}_{\mathcal{P}_1}(n,C_5) \cap \operatorname{EX}_{\mathcal{P}_1}(n,C_5)=\emptyset$. It is an interesting task to figure out the essence of these mathematical phenomena.

\section*{\normalsize Disclosure statement}
The authors did not report any potential conflict of interest.
\section*{\normalsize Acknowledgement}
S.L. financially supported by the National Natural Science Foundation of China (Grant Nos.  12571365, 12171190),  and the Open Research Fund of Hubei Key Laboratory of Mathematical Science (Grant No. MPL2026ORG006).
\section*{\normalsize Data availability}
No data is available during the current study.

\end{document}